\definecolor{cutcolour}{RGB}{0,100,0}
\newcounter{theorem}
\newtheorem{theorem}[theorem]{Theorem}
\newtheorem*{theorem*}{Theorem}
\newtheorem{lemma}[theorem]{Lemma}
\newtheorem{proposition}[theorem]{Proposition}
\newtheorem{corollary}[theorem]{Corollary}
\theoremstyle{definition}
\newtheorem{definition}[theorem]{Definition}
\newtheorem{conjecture}[theorem]{Conjecture}
\newtheorem{question}[theorem]{Question}
\newtheorem*{question*}{Question}
\newtheorem*{remark*}{Remark}
\newtheorem{remark}[theorem]{Remark}
\newtheorem{example}[theorem]{Example}
\numberwithin{equation}{section}
\newcommand{\Q}{\mathcal Q}
\newcommand{\Z}{\mathcal Z}
\newcommand{\N}{\mathbb N}
\newcommand{\tr}{\mathrm{tr}}
\newcommand{\Cu}{\mathrm{Cu}}
\newcommand{\M}{\mathcal{M}}
\renewcommand{\S}{\mathcal{S}}
\title[Uniform property $\Gamma$]{Uniform property $\Gamma$}
\author[J.\ Castillejos]{Jorge Castillejos}
\address{\hskip-\parindent Jorge Castillejos, Department of Mathematics, KU Leuven, Celestijnenlaan 200b, 3001 Leuven, Belgium.}
\curraddr{Institute of Mathematics, Polish Academy of Sciences, ul. {\'S}niadeckich 8, 00-656 Warszawa, Poland}
\email{jcastillejoslopez@impan.pl}
\author[S.\ Evington]{Samuel Evington}
\address{\hskip-\parindent Samuel Evington, Mathematical Institute, University of Oxford, Oxford, OX2 6GG, UK.}
\email{Samuel.Evington@maths.ox.ac.uk}
\author[A.\ Tikuisis]{Aaron Tikuisis}
\address{\hskip-\parindent Aaron Tikuisis, Department of Mathematics and Statistics, University of Ottawa, Ottawa, K1N 6N5, Canada.}
\email{aaron.tikuisis@uottawa.ca}
\author[S.\ White]{Stuart White}
\address{\hskip-\parindent Stuart White, Mathematical Institute, University of Oxford, Oxford, OX2 6GG, UK.}
\email{stuart.white@maths.ox.ac.uk}
\thanks{Research partially supported by: an Alexander von Humboldt Foundation fellowship (SW); European Research Council Consolidator Grant 614195 RIGIDITY (JC);  EPSRC (EP/N00874X/1 (AT); EPSRC:EP/R025061/1 (SE, SW); long term structural funding -- a Methusalem grant of the Flemish Government (JC); NCN (2014/14/E/ST1/00525) (SE); NSERC (AT)}
\begin{document}
\maketitle

\begin{abstract}
We further examine the concept of uniform property $\Gamma$ for $C^*$-algebras introduced in our joint work with Winter. In addition to obtaining characterisations in the spirit of Dixmier's work on central sequences in II$_1$ factors, we establish the equivalence of uniform property $\Gamma$, a suitable uniform version of McDuff's property for $C^*$-algebras, and the existence of complemented partitions of unity for separable nuclear $C^*$-algebras with no finite dimensional representations and a compact (non-empty) tracial state space.  As a consequence, for $C^*$-algebras as in the Toms--Winter conjecture, the combination of strict comparison and uniform property $\Gamma$ is equivalent to Jiang--Su stability.  We also show how these ideas can be combined with those of Matui--Sato to streamline Winter's classification by embeddings technique.
\end{abstract}

\renewcommand*{\thetheorem}{\Alph{theorem}}

\section*{Introduction}

The study of approximately central sequences associated to operator algebras is almost as old as the subject itself.  In their foundational work (\cite{MvN43}), Murray and von Neumann distinguished factors associated to free groups from the hyperfinite II$_1$ factor; they did this through Property $\Gamma$ --- the existence of approximately central unitaries of trace zero --- showing the latter has this property, while the former do not.  Property $\Gamma$ was subsequently analysed in detail by Dixmier (\cite{Di69}), and shown to be equivalent to non-triviality of the central sequence algebra. Moreover, in this case the central sequence algebra is necessarily diffuse, so there are approximately central projections taking all possible values of the trace. This latter characterisation has been critical in a range of applications from cohomology to the generator problem (\cite{Chr86,GePopa,Pi01,CPSS03}). On the other hand, failure of property $\Gamma$ is equivalent to fullness, and normally used through a spectral gap characterisation of Connes \cite[Theorem 2.1]{Co76}, which provides the starting point of his celebrated proof that injective II$_1$ factors are hyperfinite by enabling property $\Gamma$ to be obtained from semidiscreteness. This is the fount of central sequences, and the provision of these to a sufficient degree to show that an injective II$_1$ factor is McDuff (i.e., absorbs the hyperfinite II$_1$ factor tensorially) is the goal of the next stages of Connes' argument. Outside the injective setting, both the presence and absence of (relative versions of) property $\Gamma$ remain key topics. Spectral gap methods (often applied to subalgebras, where these can be viewed as controlling the location of approximately commuting sequences for an inclusion in the spirit of a relative notion of property $\Gamma$) have subsequently been a key tool in Popa's deformation-rigidity theory (see \cite{Po07}, for example), and progress has recently been made on the possible structure of central sequences in general II$_1$ factors  (\cite{IS}).

This paper focuses on the uniform version of property $\Gamma$ for $C^*$-algebras introduced in our recent work with Winter (\cite{CETWW}). \emph{Uniform property $\Gamma$} requires that we can approximately divide elements in a $C^*$-algebra in a central fashion and uniformly in all traces; this amounts to simultaneously (and uniformly) witnessing that every II$_1$ factor representation has Dixmier's formulation of property $\Gamma$. Uniform property $\Gamma$ differs from both the pointwise version of property $\Gamma$ for $C^*$-algebras used to obtain similarity degree estimates\footnote{This asks for every II$_1$ factor representation to have property $\Gamma$, but does not require any compatibility between how property $\Gamma$ occurs in these representations.} (\cite{QS16}), and from a version of property $\Gamma$ used to preclude certain tensorial absorption phenomena (\cite{GJS00}).\footnote{This is a modification of Murray and von Neumann's original definition, asking for approximately central unitaries which are uniformly zero in all traces; the main application is to show that $C^*$-algebras with unique trace whose II$_1$ factor representation does not have property $\Gamma$ cannot absorb the Jiang--Su algebra tensorially.}

Our motivation comes from the Toms--Winter regularity conjecture, which seeks to identify through abstract conditions those simple nuclear $C^*$-algebras which are accessible to classification by $K$-theory and traces (see \cite[Section 5]{Wi18}, for example, for a full discussion of this conjecture, which we also discuss further in Section \ref{sec:TW}).  A general local-to-global argument over the trace simplex (see \cite[Lemma 4.1]{CETWW} for a precise statement) provides the key ingredient in the passage from tensorial absorption to finite nuclear dimension in \cite{CETWW}, and it turns out that uniform property $\Gamma$ is the natural abstract condition which facilitates this. Our purpose here is to undertake an in-depth study of uniform property $\Gamma$ in its own right and set out its relation to the Toms--Winter conjecture for simple, separable, nuclear $C^*$-algebras.  

In Section \ref{sec:EquivalentFormulations}, we give a number of equivalent reformulations of uniform property $\Gamma$ in the spirit of Dixmier's original work for factors. In Section \ref{sec:Bauer}, we discuss the subtle issue of tracial factorisation: if $(e_n)_{n=1}^\infty$ is a central sequence in a unital  $C^*$-algebra $A$, and $a\in A$, what can we say about $\tau(ae_n)$ for a trace $\tau$ on $A$?  This leads to a simplification of the definition of uniform property $\Gamma$ in the case when the trace space of $A$ is a Bauer simplex. In this case, one only has to ask for the unit of $A$ to be approximately centrally divisible uniformly in trace (Corollary \ref{cor:ReductionToAis1}).\footnote{As just stated, $A$ needs to be unital, but the more precise statement in Corollary \ref{cor:ReductionToAis1} does not require this.}

The next phase of the paper is devoted to the role of uniform property $\Gamma$ in the structure theory of simple separable nuclear $C^*$-algebras. In Section \ref{sec:CPoU}, we provide a converse to the main technical result of \cite{CETWW}, and using Matui and Sato's breakthrough \cite{MS12}, characterise $\Z$-stability for simple, separable, nuclear $C^*$-algebras with non-empty and compact tracial state space, as the combination of strict comparison and uniform property $\Gamma$.  This can be thought of as analogous to Connes' passage from property $\Gamma$ to McDuff in his work on injective factors.  Indeed, we formalise the notion of a $C^*$-algebra being uniformly McDuff, a concept which implicitly played major roles in \cite{Wi12} and \cite{MS12}. Then Theorem \ref{GammaMcDuff} provides the passage from uniform property $\Gamma$ to a uniform McDuff property for non-elementary, separable, nuclear $C^*$-algebras.  In particular, this sheds light on the remaining open implication in the Toms--Winter conjecture: the missing piece is uniform property $\Gamma$. We discuss the role of property $\Gamma$, and divisibility conditions more generally, in the Toms-Winter conjecture in Section \ref{sec:TW}. Here, we highlight the following statement, extracted from Theorem \ref{thm:ZstableIffGammaComparison}, which combines the work in Section \ref{sec:CPoU}, with a wealth of developments over the last decade including \cite{Wi12,MS12,MS14,KR14,SWW15,BBSTWW,CETWW}.

\begin{theorem}\label{thm:TomsWinterAssumingGamma}
	The Toms--Winter conjecture holds among separable, simple, unital, nuclear, non-elementary $C^*$-algebras which have uniform property $\Gamma$.
\end{theorem}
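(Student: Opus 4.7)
The plan is to establish the three-way equivalence of the Toms--Winter conjecture --- (i) finite nuclear dimension, (ii) $\Z$-stability, (iii) strict comparison of positive elements --- by assembling known implications with the principal contribution of Section \ref{sec:CPoU}. The purely infinite case can be dispatched at once: if $A$ has no tracial states then $A$, being unital, simple, nuclear, and non-elementary, is a unital Kirchberg algebra, hence is tensorially $\Z$-absorbing, of finite nuclear dimension, and trivially has strict comparison. I would therefore reduce to the case $T(A)\neq\emptyset$, in which case $T(A)$ is automatically a weak-$*$ compact Choquet simplex since $A$ is unital, and uniform property $\Gamma$ has genuine content.

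In this tracial case, I would verify each implication in turn using results from the literature. The implication (i) $\Rightarrow$ (ii) is Winter's theorem \cite{Wi12}; the implication (ii) $\Rightarrow$ (iii) is standard, going back to R{\o}rdam, who showed that $\Z$-stable $C^*$-algebras automatically have strict comparison; the implication (ii) $\Rightarrow$ (i) is the main result of \cite{CETWW}. The one direction that remains --- and the only place where uniform property $\Gamma$ is used --- is (iii) $\Rightarrow$ (ii), which is the genuinely open direction of Toms--Winter. Here I would invoke Theorem \ref{thm:ZstableIffGammaComparison}, proved in Section \ref{sec:CPoU}, which asserts that for simple, separable, nuclear $C^*$-algebras with non-empty compact trace space, the conjunction of strict comparison and uniform property $\Gamma$ is equivalent to $\Z$-stability. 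Under the standing hypotheses of the theorem, strict comparison therefore yields $\Z$-stability.

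The technical core of the theorem is thus the implication (iii) $\Rightarrow$ (ii), which is precisely where the longstanding obstruction in the general Toms--Winter conjecture lies; imposing uniform property $\Gamma$ bypasses this obstruction. All the real work sits inside Section \ref{sec:CPoU} (building on Matui--Sato \cite{MS12} and the developments of \cite{MS14,KR14,SWW15,BBSTWW,CETWW}), and the present theorem is then essentially a clean corollary obtained by combining Theorem \ref{thm:ZstableIffGammaComparison} with the previously established directions of the conjecture.
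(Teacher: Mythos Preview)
Your proposal is correct and follows the same route as the paper: reduce to the tracial case, then combine the known implications (i)$\Leftrightarrow$(ii) from \cite{Wi12,CETWW} and (ii)$\Rightarrow$(iii) from \cite{Ro04} with Theorem \ref{thm:ZstableIffGammaComparison} for the remaining direction (iii)$\Rightarrow$(ii). One small remark: your purely infinite case discussion is vacuous, since uniform property $\Gamma$ is by definition (Definition \ref{defn:Gamma}) only formulated for $C^*$-algebras with $T(A)$ nonempty and compact, so the hypothesis already forces $T(A)\neq\emptyset$; moreover, the inference ``no traces $\Rightarrow$ Kirchberg'' is not automatic for arbitrary simple unital nuclear $C^*$-algebras without a regularity hypothesis such as $\Z$-stability or finite nuclear dimension, so that step would have needed more care had it actually been needed.
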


Examples of separable nuclear $C^*$-algebras with property $\Gamma$ are by now abundant.  Kerr and Szab\'o establish uniform property $\Gamma$ for crossed product $C^*$-algebras arising from a free action with the small boundary property of an infinite amenable group on a compact metrisable space in \cite[Theorem 9.4]{KS18}.  Combining Theorem \ref{thm:TomsWinterAssumingGamma} with this result, the Toms--Winter conjecture then holds for the simple $C^*$-algebras one obtains from free minimal actions with the small boundary property --- minimality giving rise to simplicity of the crossed product. This was also recorded in \cite{KS18}, which used an early version of this paper for Theorem \ref{thm:TomsWinterAssumingGamma}.

\begin{corollary}[{c.f.\ \cite[Corollary 9.5]{KS18}}]
Let $G\curvearrowright X$ be a free minimal action of an infinite amenable group on a compact Hausdorff space with the small boundary property.  Then the Toms--Winter conjecture holds for $C(X)\rtimes G$.
\end{corollary}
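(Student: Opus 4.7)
The plan is to invoke Theorem~\ref{thm:TomsWinterAssumingGamma} with $A := C(X) \rtimes G$. Accordingly, one first verifies the five structural hypotheses of that theorem (separable, simple, unital, nuclear, non-elementary) and then obtains uniform property $\Gamma$ from the Kerr--Szabó result to unlock the conclusion.

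For the structural hypotheses, unitality is immediate from compactness of $X$. Separability of $A$ follows from $X$ being metrizable and $G$ countable, conventions implicit in the framework of \cite{KS18}. Nuclearity of $A$ is standard, since $C(X)$ is nuclear and $G$ is amenable. Simplicity is the classical consequence of the action being free and minimal, combined with amenability of $G$ (so full and reduced crossed products agree and the Archbold--Spielberg criterion applies). For non-elementarity, one observes that freeness of the action of the infinite group $G$ forces $X$ to be infinite, hence $C(X) \subset A$ is a diffuse commutative subalgebra; as $A$ is simple and unital, it cannot be isomorphic to a matrix algebra or to the algebra of compact operators, so it is non-elementary.

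The remaining ingredient is \cite[Theorem~9.4]{KS18}, which supplies uniform property $\Gamma$ for $A$ from the small boundary property of the free action. With every hypothesis of Theorem~\ref{thm:TomsWinterAssumingGamma} now in place, that theorem applies verbatim and yields the Toms--Winter conjecture for $A$. There is no substantive obstacle: the argument is essentially a citation-assembly packaging the deep content supplied by \cite{KS18} on one side and by Theorem~\ref{thm:TomsWinterAssumingGamma} (with its inputs from \cite{Wi12,MS12,MS14,KR14,SWW15,BBSTWW,CETWW}) on the other. The only mild care needed lies in the brief non-elementarity verification and in recording the tacit metrizability and countability assumptions inherited from the Kerr--Szabó framework.
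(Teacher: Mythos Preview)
Your proposal is correct and follows essentially the same approach as the paper: the corollary is derived by combining \cite[Theorem~9.4]{KS18} (which gives uniform property $\Gamma$) with Theorem~\ref{thm:TomsWinterAssumingGamma}, after noting that minimality yields simplicity. The paper leaves the verification of the remaining hypotheses (separability, unitality, nuclearity, non-elementarity) implicit, whereas you spell them out; this is a harmless expansion rather than a different argument.
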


 With hindsight, the papers \cite{Sa12,KR14,TWW15} all establish uniform property $\Gamma$ for separable nuclear $C^*$-algebras whose extremal traces are compact and finite dimensional. This is why strict comparison implies Jiang--Su-stability for these algebras.  But uniform property $\Gamma$ holds outside this situation too: various diagonal $AH$-algebras, including Villadsen algebras of infinite nuclear dimension have uniform property $\Gamma$ (see Proposition \ref{prop:DiagonalAHGamma}).  Indeed, it is open whether every simple, separable, unital, infinite dimensional nuclear $C^*$-algebra $A$ has uniform property $\Gamma$.

\begin{question}\label{QB}
Does every separable nuclear $C^*$-algebra with no (non-zero) finite dimensional representations and non-empty compact set of tracial states have uniform property $\Gamma$?
\end{question}

Using Theorem \ref{thm:TomsWinterAssumingGamma} (and in particular Matui and Sato's work \cite{MS12}), a positive answer to Question \ref{QB} would resolve the Toms--Winter conjecture affirmatively.  Moreover, in the case where $T(A)$ is a Bauer simplex, Question \ref{QB}, can be formulated using the language of $W^*$-bundles from \cite{Oz13}: is the $W^*$-bundle obtained as the strict closure of $A$ trivial? This is a bundle over the extreme boundary of $\partial_eT(A)$, with fibres the hyperfinite II$_1$ factor, and it remains an open question, first raised implicitly just prior to Corollary 16 in \cite{Oz13} and explicitly as \cite[Question 3.14]{BBSTWW}, whether every $W^*$-bundle over a compact metrisable space with hyperfinite II$_1$ factor fibres is necessarily trivial.  By Ozawa's trivialisation theorem (\cite[Theorem 15]{Oz13}), this happens when the $W^*$-bundle has an appropriate version of uniform property $\Gamma$ (which in the case of bundles coming from strict closures of $C^*$-algebras is precisely uniform property $\Gamma$ for the $C^*$-algebra).

We end the paper in Section \ref{sec:CBErevisited}, by turning to the Elliott classification programme. In particular we examine how the techniques of \cite{CETWW} powered by uniform property $\Gamma$ simplify a component of the classification theorem \cite[Corollary D]{TWW17}. Specifically, Theorem \ref{ClassEmbeddings} eliminates the hypothesis of finite nuclear dimension from Winter's classification-by-embeddings theorem in \cite{Wi16}, which plays a fundamental role in the tracial approximation approach to classification in \cite{GLN,EGLN15}.  The proof, which is heavily inspired by Matui and Sato's work \cite{MS14}, has parallels to an approach to the final steps in Connes' argument (see Remark \ref{rem:Connes}).

\subsection*{Acknowledgements} We thank Leonel Robert and Hannes Thiel for several helpful conversations regarding the Cuntz semigroup, and in particular for helping us with Proposition \ref{prop:Divisibility}.  We also thank Wilhelm Winter for the discussions in our collaboration \cite{CETWW} which sparked this paper, and Chris Schafhauser and G\'abor Szab\'o for helpful comments on a preprint version of this paper. Finally we thank the referees for their helpful comments on the first version of the paper.

\numberwithin{theorem}{section}
\section{Preliminaries}
\label{sec:Prelims}

Throughout the paper, a \emph{trace} on a $C^*$-algebra $A$ means a tracial state. We write $T(A)$ for the set of traces on $A$ endowed with the weak$^*$-topology.  Our framework is the setting where $T(A)$ is non-empty and compact (as is the case when $A$ is unital and has a trace), so that $T(A)$ becomes a Choquet simplex \cite[Theorem 3.1.14]{Sak98} which is metrisable when $A$ is separable. We will often at least implicitly assume that $A$ has no (non-zero) finite dimensional representations, as this will be an immediate consequence of the definition of uniform property $\Gamma$.  This framework roughly corresponds to the II$_1$ factor setting where property $\Gamma$ first originated. Moreover, unital, stably finite, simple, exact $C^*$-algebras always fall into the scope of the paper (using a deep result of Haagerup \cite{Ha14} to see that $T(A)\neq \emptyset$).  In the non-unital case, the tracial state space assumptions are a genuine restriction. But given a simple, separable $C^*$-algebra $A$ which has non-zero densely defined lower semicontinuous tracial weights, one can often extract a hereditary subalgebra $A_0$ of $A$ whose tracial state space is non-empty and compact (see for example \cite[Lemma 2.5]{CE}; the hypothesis of this lemma can be verified using $\mathcal Z$-stability or, using the very recent results of \cite[Theorem 7.5]{APRT}, through stable rank one).  In this case, for many purposes (including notions of regularity in the Toms-Winter conjecture, or tackling questions of classification), one can work with this instead.

Let $A$ be a $C^*$-algebra with $T(A) \neq \emptyset$. Each $\tau \in T(A)$ induces a seminorm given by $\|a\|_{2,\tau} \coloneqq  \tau(a^*a)^{1/2}$ for $a\in A$. Given a non-empty subset $X\subset T(A)$, we obtain a \emph{uniform $2$-seminorm} associated to $X$ by
\begin{equation}
	\|a\|_{2,X} \coloneqq  \sup_{\tau \in X} \|a\|_{2,\tau},\quad a\in A.
\end{equation}
The seminorm $\|\cdot\|_{2,X}$ is dominated by the operator norm $\|\cdot\|$, and we have 
\begin{equation}
	\|ab\|_{2,X} \leq \min \lbrace\|a\|\|b\|_{2,X},\|b\|\|a\|_{2,X}\rbrace,\quad a,b\in A. \label{2normMultiplicative}   
\end{equation} 
We will primarily be interested in the case that $X=T(A)$, when the seminorm $\|\cdot\|_{2,T(A)}$ is known as the \emph{uniform trace seminorm}.  In many situations of interest this seminorm is actually a norm.\footnote{For example, if $A$ is unital, simple, stably finite, and exact, then $\|\cdot\|_{2,T(A)}$ is a norm.} In general, the $\|\cdot\|_{2,T(A)}$-null elements of $A$ form an ideal by \eqref{2normMultiplicative} and $\|\cdot\|_{2,T(A)}$ descends to a norm on the quotient. 

We will work with two flavours of central sequence algebras in the paper.  To set this up, we let $\omega \in \beta\N\setminus \N$ denote a fixed free ultrafilter. The ultrapower of a $C^*$-algebra $A$ is defined by
\begin{equation}
	A_\omega \coloneqq  \frac{\ell^\infty(A)}{\lbrace (a_n)_{n=1}^\infty: \lim_{n\rightarrow\omega}\|a_n\| = 0\rbrace}.
\end{equation} 
When $T(A)\neq\emptyset$, the \emph{uniform tracial ultrapower} of $A$ is defined by
\begin{equation}
	A^\omega \coloneqq  \frac{\ell^\infty(A)}{\lbrace (a_n)_{n=1}^\infty: \lim_{n\rightarrow\omega}\|a_n\|_{2,T(A)} = 0\rbrace}.
\end{equation}
Since $\|a\|_{2,T(A)} \leq \|a\|$ for all $a \in A$, there are canonical surjections $\ell^\infty(A) \twoheadrightarrow A_\omega \twoheadrightarrow A^\omega$.  As is standard, we use representative sequences in $\ell^\infty(A)$ to denote elements of $A_\omega$ and $A^\omega$, respectively.  Recall that a key property of ultrapowers is countable saturation, which loosely speaking enables one to pass from approximate satisfaction of certain properties to exact satisfaction.  The precise technical tool we use to do this is Kirchberg's $\epsilon$-test (\cite[Lemma A.1]{Kir06}). 

Given a sequence $(\tau_n)_{n=1}^\infty$ in $T(A)$, one can form a trace on $\ell^\infty(A)$ by
\begin{equation}
	(a_n)_{n=1}^\infty \mapsto \lim_{n\rightarrow\omega} \tau_n(a_n),
\end{equation}
which induces traces on the ultrapowers $A_\omega$ and $A^\omega$. Traces on $A_\omega$ and $A^\omega$ arising in this manner are called \emph{limit traces}. We write $T_\omega(A)$ for the set of limit traces on either $A_\omega$ or $A^\omega$.  Notice that essentially by construction $\|\cdot\|_{2,T_\omega(A)}$ is a norm on $A^\omega$.

We identify $A$ with the subalgebra of $A_\omega$ coming from constant sequences. When $T(A)$ is non-empty, we have a canonical map $\iota:A\rightarrow A^\omega$ given by composing this identification with the quotient $A_\omega \to A^\omega$; $\iota$ is an embedding when $\|\cdot\|_{2,T(A)}$ is a norm. 

By \cite[Proposition 1.11]{CETWW}, compactness of the trace space precisely characterises unitality of the uniform tracial ultrapower. This is the fundamental  reason behind our choice of framework for the study of uniform property $\Gamma$.  In order to collect a fact for later use, we give an alternative proof of the unitality of $A^\omega$ below.

\begin{proposition}\label{UnitalProp}
Let $A$ be a separable $C^*$-algebra with $T(A)$ compact and non-empty.  Then $A^\omega$ is unital. Moreover, if $(e_n)_{n=1}^\infty$ is an approximate unit for $A$, then $\|\iota(e_n)-1_{A^\omega}\|_{2,T_\omega(A)}\rightarrow 0$.
\end{proposition}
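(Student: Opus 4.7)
The plan is to exploit the compactness of $T(A)$ to upgrade the pointwise fact that $\tau(f_n)\to 1$ (for an approximate unit $(f_n)$) to a uniform statement via Dini's theorem, and then to convert this uniform bound into control of the relevant $2$-seminorms.

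First, I would fix an increasing positive contractive approximate unit $(f_n)$ for $A$ (available for every $C^*$-algebra). For each $\tau\in T(A)$, non-degeneracy of the GNS representation together with the approximate-unit property gives $\pi_\tau(f_n)\to 1$ strongly on $H_\tau$, so $\tau(f_n)\to 1$. The maps $\tau\mapsto\tau(f_n)$ are continuous on the compact set $T(A)$, increasing in $n$, and converge pointwise to $1$, so Dini's theorem yields
\[
\sup_{\tau\in T(A)}\bigl(1-\tau(f_n)\bigr)\longrightarrow 0.
\]
I would then show that $[(f_n)]\in A^\omega$ is a two-sided unit, hence $A^\omega$ is unital. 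Indeed, for any representative $(a_n)\in\ell^\infty(A)$ with $\|a_n\|\le M$, trace cyclicity together with the inequalities $a_na_n^*\le M^2\,1_{A^{**}}$ and $(1-f_n)^2\le 1-f_n$ gives
\[
\|f_na_n-a_n\|_{2,\tau}^2=\tau\bigl((1-f_n)a_na_n^*(1-f_n)\bigr)\le M^2\,\tau(1-f_n),
\]
and taking $\sup_\tau$ shows $\lim_{n\to\omega}\|f_na_n-a_n\|_{2,T(A)}=0$; the symmetric calculation on the right confirms that $[(f_n)]$ serves as a unit for $A^\omega$.

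For the \emph{moreover} clause with an arbitrary approximate unit $(e_n)$, I would compute, for any limit trace $\sigma=\lim_{k\to\omega}\tau_k$, using $e_n^2\le e_n$,
\[
\|\iota(e_n)-1_{A^\omega}\|_{2,\sigma}^2=\sigma(\iota(e_n^2))-2\sigma(\iota(e_n))+1\le 1-\sigma(\iota(e_n))=\lim_{k\to\omega}\bigl(1-\tau_k(e_n)\bigr),
\]
so that
\[
\|\iota(e_n)-1_{A^\omega}\|_{2,T_\omega(A)}^2\le\sup_{\tau\in T(A)}\bigl(1-\tau(e_n)\bigr).
\]
It thus remains to show $\sup_\tau(1-\tau(e_n))\to 0$ for a not-necessarily-monotone $(e_n)$. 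I would do this by comparison with $(f_n)$: given $\varepsilon>0$, choose $N$ with $\sup_\tau(1-\tau(f_N))<\varepsilon$; for $n$ large, $\|e_nf_N-f_N\|<\varepsilon$, and since $\tau(e_n(1-f_N))=\tau\bigl((1-f_N)^{1/2}e_n(1-f_N)^{1/2}\bigr)\ge 0$, one gets $\tau(e_n)\ge\tau(e_nf_N)\ge\tau(f_N)-\varepsilon>1-2\varepsilon$ uniformly in $\tau$.

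The main obstacle is the applicability of Dini's theorem, which requires monotonicity. This is sidestepped by starting with the standard increasing approximate unit $(f_n)$ and then transferring the uniform estimate to an arbitrary approximate unit via the comparison argument above; everything else is routine trace manipulation underpinned by the single uniform bound $\sup_\tau(1-\tau(f_n))\to 0$.
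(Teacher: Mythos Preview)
Your proof is correct and follows essentially the same approach as the paper: use Dini's theorem on $T(A)$ to obtain $\sup_{\tau\in T(A)}(1-\tau(f_n))\to 0$, then convert this into the required $2$-seminorm estimates. The paper applies Dini directly to an arbitrary approximate unit (implicitly taking it to be increasing), whereas you are more careful, first running Dini on an increasing $(f_n)$ and then transferring the uniform bound to a general $(e_n)$ via a comparison argument; this is a nice extra detail but not a genuinely different route.
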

\begin{proof}
Let $(e_n)_{n=1}^\infty$ be an approximate unit for $A$. Then by Dini's Theorem, $\lim_{n\to\omega} \inf_{\tau \in T(A)} \tau(e_n) = 1$.

Then for any $(x_n)_{n=1}^\infty\in\ell^\infty(A)$, working in the minimal unitisation of $A$ we have,
\begin{equation}
\sup_{\tau\in T(A)}\|e_nx_n-x_n\|_{2,\tau}\leq \|x_n\|^{1/2}\sup_{\tau\in T(A)}(1-\tau(e_n))^{1/2}\rightarrow 0,
\end{equation}
as $n\rightarrow\omega$. Therefore $A^\omega$ is unital and the unit is represented by the sequence $(e_n)_{n=1}^\infty$. Now
\begin{align}
\|1_{A^\omega}-\iota(e_n)\|_{2,T_\omega(A)}^2&=\lim_{m\to\omega} \sup_{\tau \in T(A)}\tau((e_m-e_n)^2)\nonumber\\&\leq 1-\inf_{\tau \in T(A)}\tau(e_n)\rightarrow 0,
\end{align}
as $n\rightarrow\omega$, as claimed.
\end{proof}

The norm central sequence algebra is $A_\omega\cap A'$, and by abuse of notation we write $A^\omega\cap A'$ for the \emph{uniform tracial central sequence algebra}; strictly speaking, when $\|\cdot\|_{2,T(A)}$ is not a norm, $A^\omega\cap A'$ is shorthand for $A^\omega\cap \iota(A)'$.  While it is immediate that $A_\omega$ quotients onto $A^\omega$, the corresponding result for central sequence algebras is deeper, and was established by Kirchberg and R\o{}rdam in \cite{KR14}, building on an observation of Sato \cite{Sa11}.  The result below is a combination of \cite[Proposition 4.5(iii) and Proposition 4.6]{KR14} (working in the minimal unitisation if $A$ is not unital).

\begin{lemma}[Central Surjectivity]\label{CentSurject}
Let $A$ be a separable $C^*$-algebra with $T(A)$ compact and non-empty. Then the canonical map $A_\omega\cap A'\rightarrow A^\omega\cap A'$ is a surjection.
\end{lemma}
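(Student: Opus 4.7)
The plan is to apply Kirchberg's $\epsilon$-test \cite[Lemma A.1]{Kir06} to reduce the problem to an approximate lifting. Fix $y \in A^\omega \cap A'$ together with a bounded representative $(y_n) \in \ell^\infty(A)$, and choose a countable dense sequence $(a_k)_{k=1}^\infty$ in $A$. My aim is to produce $(x_n) \in \ell^\infty(A)$, norm-bounded by $\|y\|+1$, such that $\lim_{n \to \omega} \|x_n - y_n\|_{2, T(A)} = 0$ (so $(x_n)$ still represents $y$ in $A^\omega$) and $\lim_{n \to \omega} \|[a_k, x_n]\| = 0$ for every $k$ (so that $(x_n)$ yields an element of $A_\omega \cap A'$ lifting $y$). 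With $X_n = \{x \in A : \|x\| \leq \|y\|+1\}$, I would set up the tests $f_0^{(n)}(x) = \|x - y_n\|_{2, T(A)}$ and $f_k^{(n)}(x) = \|[a_k, x]\|$ for $k \geq 1$, and apply the $\epsilon$-test to reduce to the following joint approximate achievability: for every finite $F \subset \N$ and every $\epsilon > 0$, there exists $(x_n^{F, \epsilon}) \in \prod_n X_n$ with $\limsup_{n \to \omega} \|x_n^{F, \epsilon} - y_n\|_{2, T(A)} < \epsilon$ and $\limsup_{n \to \omega} \|[a_k, x_n^{F, \epsilon}]\| < \epsilon$ for $k \in F$.

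Constructing these approximate lifts is the technical heart. Since $y \in A'$, the sequence $(y_n)$ is asymptotically central in the uniform 2-norm, so the task is to perturb $y_n$ by a 2-norm small amount to recover operator-norm approximate centrality on finite sets. Following the strategy of Sato \cite{Sa11}, I would embed $A$ into its trace-enveloping finite von Neumann algebra $\mathcal{M}$ (the weak closure of $A$ in the sum of GNS representations of $T(A)$), use the canonical trace-preserving conditional expectation onto $\mathcal{M} \cap A'$ to replace $(y_n)$ by a centre-valued approximation that stays close in 2-norm, and then Kaplansky density to pull back into $A$. A further quasi-central averaging over unitaries from (the unitisation of) $A$ converts the resulting strong-$*$ approximation into an operator-norm approximately central element for the finite set $F$.

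The main obstacle is the very last step: converting the 2-norm (strong-$*$) approximation that Kaplansky density supplies into genuine operator-norm near-commutation with $F$, since these norms are strictly different. Overcoming this is precisely the content of the Sato/Kirchberg-R\o{}rdam construction and requires exploiting the fact that the trace-null ideal $\ker(A_\omega \to A^\omega)$ is rich enough to absorb the operator-norm discrepancy. The compactness of $T(A)$ is used throughout to guarantee that all 2-norm estimates are uniform across traces, which is what allows the $\epsilon$-test reduction to close.
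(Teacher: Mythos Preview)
Your $\epsilon$-test reduction is sound, but the construction of the approximate lifts has a genuine gap. Since $\mathcal{M}$ is the weak closure of $A$, one has $\mathcal{M}\cap A'=Z(\mathcal{M})$, so your ``canonical trace-preserving conditional expectation onto $\mathcal{M}\cap A'$'' is simply the centre-valued trace. This does \emph{not} stay close to $y_n$ in $2$-norm. For a concrete failure, take $A$ separable, nuclear, with unique trace $\tau$ and no finite-dimensional representations, so that $\mathcal{M}=\pi_\tau(A)''\cong\mathcal{R}$ and $Z(\mathcal{M})=\mathbb{C}1$. Then $E(y_n)=\tau(y_n)1$. If $y\in A^\omega\cap A'=\mathcal{R}^\omega\cap\mathcal{R}'$ is a projection of trace $\tfrac12$, then $E(y_n)\to\tfrac12$ while $\|y_n-\tfrac12\|_{2,\tau}\to\|y-\tfrac12\|_2=\tfrac12$; the replacement discards precisely the information you need. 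Skipping the conditional expectation and going straight to ``quasi-central averaging'' does not help either: the Arveson--Kasparov convex-combination trick produces approximately central elements from an approximate unit of an \emph{ideal}, not from an arbitrary element of $A$, and there is no reason a convex combination $\tfrac1N\sum u_i y_n u_i^*$ should have small operator-norm commutators with a prescribed finite set.

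The paper does not prove this lemma itself; it cites \cite[Propositions~4.5(iii) and~4.6]{KR14}. The Kirchberg--R{\o}rdam mechanism is structurally different from your outline and does not pass through $\mathcal{M}$ at all: one shows that the trace-kernel $J=\ker(A_\omega\to A^\omega)$ is a $\sigma$-ideal, meaning that for every separable $C\subseteq A_\omega$ there is a positive contraction $e\in J\cap C'$ with $ec=c$ for all $c\in J\cap C$. Given any lift $\tilde{y}\in A_\omega$ of $y$, the commutators $[\tilde{y},a]$ for $a\in A$ all lie in $J$; applying the $\sigma$-ideal property with $C=C^*(A,\tilde{y})$ yields such an $e$, and then $(1-e)\tilde{y}$ commutes exactly with $A$ (since $[e,a]=0$ gives $[(1-e)\tilde{y},a]=(1-e)[\tilde{y},a]=0$) while still mapping to $y$. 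The quasi-centrality argument is thus applied inside the ideal $J$, where the Hahn--Banach trick is available, rather than to $y_n$ directly. Your closing paragraph correctly senses that the trace-null ideal is the key, but the route you sketch to get there does not reach it.
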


For a $C^*$-algebra $A$ and a tracial state $\tau \in T(A)$, we let $\pi_\tau:A \to \mathcal B(\mathcal H_\tau)$ denote the GNS representation.  Recall (for context only) that $\pi_\tau(A)''$ is a factor if and only if $\tau$ is an extremal trace (\cite[Th\'eor\`eme 6.7.3]{Di77}). However in this paper, we often have to work with non-extremal traces and hence general finite von Neumann algebras. As every finite type I von Neumann algebra has a non-zero finite dimensional representation, GNS-representations associated to traces on $C^*$-algebras without finite dimensional quotients give rise to type II$_1$ von Neumann algebras.

\begin{proposition}\label{prop:TypeII}
Let $A$ be a $C^*$-algebra with no non-zero finite dimensional quotients and let $\tau \in T(A)$. Then $\pi_\tau(A)''$ is a type II$_1$ von Neumann algebra.
\end{proposition}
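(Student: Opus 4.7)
The approach will be to invoke the classical structure theory of finite type~I von Neumann algebras. Via the GNS construction, $\tau$ extends to a faithful normal tracial state on $\pi_\tau(A)''$, so this is a non-zero finite von Neumann algebra, which by type decomposition splits as the direct sum of a finite type~I part and a type~II$_1$ part. Writing the type~I part as $\bigoplus_{n\ge 1} M_n(B_n)$ with each $B_n$ an abelian von Neumann algebra, it therefore suffices to show that $B_n=0$ for every $n\ge 1$; I will do this by contradiction, using a non-zero $B_n$ to manufacture a non-zero $*$-homomorphism $A\to M_n$, whose image is a non-zero finite dimensional quotient of $A$, contradicting the hypothesis.

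Suppose $B_n\neq 0$ for some $n$, and let $z\in Z(\pi_\tau(A)'')$ be the central projection with $z\pi_\tau(A)''\cong M_n(B_n)$. Since $z\neq 0$ and $\pi_\tau(A)$ is weak-$\ast$ dense in $\pi_\tau(A)''$, I can pick $a\in A$ with $z\pi_\tau(a)\neq 0$. Writing $z\pi_\tau(a)=(b_{ij})\in M_n(B_n)$, some entry $b_{i_0 j_0}\in B_n$ is non-zero, so Gelfand theory applied to the unital abelian $C^*$-algebra $B_n$ provides a character $\chi\colon B_n\to\mathbb{C}$ with $\chi(b_{i_0 j_0})\neq 0$. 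Applying $\chi$ entry-wise produces a $*$-homomorphism $\tilde\chi\colon M_n(B_n)\to M_n$, and the composition
\begin{equation*}
\phi\colon A \xrightarrow{\pi_\tau} \pi_\tau(A)'' \xrightarrow{z\,\cdot} M_n(B_n) \xrightarrow{\tilde\chi} M_n
\end{equation*}
is then a $*$-homomorphism with $\phi(a)\neq 0$, completing the contradiction.

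There is no serious obstacle: granted the structure theorem, the argument is essentially mechanical. The one point requiring a little care is to invoke the abstract form (that a type~I$_n$ von Neumann algebra is isomorphic to $M_n(B)$ for some abelian von Neumann algebra $B$) rather than a direct integral decomposition, since the proposition does not assume separability of $A$.
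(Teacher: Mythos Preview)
Your proof is correct and follows exactly the approach the paper indicates. The paper does not give a formal proof of this proposition; instead, it offers the one-sentence justification immediately preceding the statement (``As every finite type~I von Neumann algebra has a non-zero finite dimensional representation, GNS-representations associated to traces on $C^*$-algebras without finite dimensional quotients give rise to type~II$_1$ von Neumann algebras''), and your argument is a careful unpacking of precisely that idea.
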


It is a consequence of Connes' celebrated equivalence of injectivity and hyperfiniteness (\cite[Theorem 6]{Co76}), that injective type II$_1$ factors are McDuff, in that they tensorially absorb the hyperfinite II$_1$ factor, or equivalently by McDuff's theorem from \cite{McD70}, admit $\|\cdot\|_2$-approximately central unital matrix embeddings.\footnote{It is of course immediate that the hyperfinite II$_1$ factor $\mathcal R$ is McDuff. However, in Connes' original argument that a separably acting injective II$_1$ factor $\mathcal M$ is isomorphic to $\mathcal R$, a key intermediate step is to show that $\mathcal M$ is McDuff.  So the McDuff property of injective II$_1$ factors is both a consequence, and an ingredient, of injectivity implies hyperfiniteness.} In Section \ref{sec:CPoU}, we require the analogous version of this fact for general injective type II$_1$ von Neumann algebras. This is well-known to experts, but does not appear to be in the literature, so we briefly sketch the details, starting by recording two standard facts regarding type II$_1$ von Neumann algebras. Firstly, these admit unital embeddings of matrix algebras; see, for example, \cite[Proposition V.1.35]{Tak79}, which proves the following lemma in the case $n=2$ (and whose proof can be modified to handle the general case).
\begin{lemma}\label{lem:UnitalEmbedding}
	Let $\mathcal M \neq 0$ be a type II$_1$ von Neumann algebra. Then there exists a unital embedding $M_n
 \rightarrow \mathcal M$ for all $n \in \mathbb{N}$. 
\end{lemma}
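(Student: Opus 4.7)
The plan is to construct $n$ pairwise orthogonal, mutually Murray--von Neumann equivalent projections $p_1,\dots,p_n\in\mathcal{M}$ with $\sum_{i=1}^n p_i = 1$, and then assemble them with partial isometries implementing the equivalences into a system of $n\times n$ matrix units. These matrix units will determine a unital $*$-homomorphism $M_n \hookrightarrow \mathcal M$, which is automatically injective since $M_n$ is simple.

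The key tool will be the center-valued trace $T\colon \mathcal M\to Z(\mathcal M)$, which exists on any finite (hence any type II$_1$) von Neumann algebra. The two properties I would invoke are: (i) for projections $p,q\in\mathcal M$, one has $p\preceq q$ if and only if $T(p)\leq T(q)$ in $Z(\mathcal M)$, and (ii) because $\mathcal M$ is of type II (with no type I direct summand), every element of $\{z\in Z(\mathcal M)_+ : z\leq 1\}$ is of the form $T(p)$ for some projection $p\in\mathcal M$. Using (ii), I would first produce a projection $p_1\in\mathcal M$ with $T(p_1) = \tfrac{1}{n}\cdot 1$. Then I would inductively construct $p_2,\dots,p_{n-1}$: assuming pairwise orthogonal projections $p_1,\dots,p_k\sim p_1$ (with $k<n-1$) have been found, the complement satisfies $T(1-p_1-\cdots-p_k) = \tfrac{n-k}{n}\cdot 1 \geq \tfrac{1}{n}\cdot 1$, so (i) supplies a subprojection $p_{k+1}\leq 1-p_1-\cdots-p_k$ with $p_{k+1}\sim p_1$. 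At the last stage, the residual projection $p_n := 1-p_1-\cdots-p_{n-1}$ has $T(p_n) = \tfrac{1}{n}\cdot 1 = T(p_1)$, whence a final use of (i) gives $p_n\sim p_1$.

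Once the $p_i$'s are in hand, I would pick partial isometries $v_1 = p_1$ and $v_i\in\mathcal M$ (for $i\geq 2$) with $v_i^*v_i=p_1$ and $v_iv_i^*=p_i$. Setting $e_{ij}:=v_iv_j^*$ produces a family satisfying $e_{ij}^*=e_{ji}$, $e_{ij}e_{k\ell}=\delta_{jk}e_{i\ell}$, and $\sum_{i=1}^n e_{ii} = \sum_{i=1}^n p_i = 1$, which is exactly a unital system of $n\times n$ matrix units in $\mathcal M$. The resulting linear extension to $M_n\to\mathcal M$ is the desired unital embedding. The main non-elementary ingredient is the existence and comparison property of the center-valued trace on a type II$_1$ von Neumann algebra together with the type II surjectivity of its image on projections; these are classical theorems from the structure theory of finite von Neumann algebras, and are precisely the tools underlying the $n=2$ case in Takesaki V.1.35.
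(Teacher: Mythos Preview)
Your proof is correct. The paper does not spell out a proof; it simply cites \cite[Proposition V.1.35]{Tak79} for the case $n=2$ and remarks that the argument can be modified for general $n$. Your approach---using the center-valued trace and its comparison theorem to produce $n$ pairwise orthogonal, mutually equivalent projections summing to $1$, and then assembling matrix units from partial isometries---is precisely the natural generalisation of Takesaki's argument, so your proposal is exactly in line with what the paper indicates.

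For what it is worth, the paper's source contains a commented-out alternative argument that takes a different route: it first invokes the absence of nonzero abelian projections in a type II$_1$ algebra to obtain a \emph{non-unital} copy of $M_n$ inside any nonzero corner, and then runs a Zorn's lemma maximality argument over families of orthogonal projections $p$ for which $pMp$ admits a unital $M_n$-embedding, taking a diagonal sum at the end. Compared with that, your direct construction via the center-valued trace is more explicit and avoids the appeal to Zorn; the maximality approach, on the other hand, uses slightly less machinery (only the existence of nonzero $M_n$-copies, not the full comparison/range theorem for the center-valued trace).
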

\begin{comment}
\begin{proof}
	Fix $n \in \mathbb{N}$. Since type II$_1$ von Neumann algebras have no non-zero abelian projections, there exists a non-zero embedding $M_n
 \rightarrow M$ by \cite[Lemma 3.4.8]{Br08}. We now run a standard maximality argument to get a unital embedding.

By Zorn's lemma, there exists a maximal set $P$ of pairwise orthogonal projections in $M$ with the property that there is a unital embedding $\phi_p:M_n
 \rightarrow pMp$ for all $p \in P$. Let $q = \sum_{p \in P} p$. 
	
Suppose $q \neq 1$. Then $(1-q)M(1-q) \neq 0$ is a type II$_1$ von Neumann algebra. Therefore, there exists a non-zero embedding $M_n
 \rightarrow (1-q)M(1-q)$ by \cite[Lemma 3.4.8]{Br08}, which we can view as a unital embedding into $p'Mp'$ for some projection $0 \neq p' \leq (1-q)$. This contradicts the maximality of $P$. Hence, $q = 1$. 

The diagonal map $\Delta_{p \in \mathcal{P}} \, \phi_p:M_n
 \rightarrow \prod_{p \in \mathcal{P}} pMp \subseteq M$ is a unital embedding.  
\end{proof}
\end{comment}

\begin{lemma}\label{Lem1.3}
	Let $\mathcal M$ be a type II$_1$ von Neumann algebra, and let $F$ be a finite dimensional subalgebra of $\mathcal M$. Then $\mathcal M\cap F'$ is type II$_1$.
\end{lemma}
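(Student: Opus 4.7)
The plan is to exploit the matrix-unit structure of $F$ to identify $\mathcal M \cap F'$ with a direct sum of corners of $\mathcal M$, each of which inherits the type II$_1$ property from $\mathcal M$.

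First, I would reduce to the case $1_F = 1_{\mathcal M}$. Setting $p := 1_F$, the inclusion $p \in F$ forces every element of $\mathcal M \cap F'$ to commute with $p$, so $F + \mathbb C(1-p)$ is a unital finite dimensional $*$-subalgebra of $\mathcal M$ with the same relative commutant as $F$ (closure under multiplication uses $(1-p)b = 0 = a(1-p)$ for $a, b \in F \subseteq p\mathcal M p$). Replacing $F$ by this enlargement, I may assume $F$ is unital in $\mathcal M$.

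Next, write $F = \bigoplus_{i=1}^{k} M_{n_i}$ with matrix units $\{ e_{rs}^{(i)} \}$. A standard matrix-unit computation yields the $*$-isomorphism
\[
\mathcal M \cap F' \;\cong\; \bigoplus_{i=1}^{k} e_{11}^{(i)} \mathcal M e_{11}^{(i)},
\]
where $x$ on the left corresponds to $\bigl( e_{11}^{(i)} x e_{11}^{(i)} \bigr)_{i}$ on the right, with inverse $(y_i)_i \mapsto \sum_{i,r} e_{r1}^{(i)} y_i e_{1r}^{(i)}$.

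Finally, each corner $e_{11}^{(i)} \mathcal M e_{11}^{(i)}$ is type II$_1$: finiteness passes to corners, and any abelian projection $q$ in the corner satisfies $q \leq e_{11}^{(i)}$, so $q \mathcal M q = q \bigl( e_{11}^{(i)} \mathcal M e_{11}^{(i)} \bigr) q$ is abelian, making $q$ itself an abelian projection in $\mathcal M$ and therefore forcing $q = 0$. A finite direct sum of type II$_1$ von Neumann algebras is again type II$_1$, so $\mathcal M \cap F'$ is type II$_1$. The argument is essentially bookkeeping with matrix units and presents no serious obstacle; the only point meriting a little care is the initial reduction to the case where $F$ is unital in $\mathcal M$.
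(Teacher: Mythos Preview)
Your proposal is correct and follows essentially the same route as the paper: reduce to the unital case, decompose $F$ into matrix summands, and identify $\mathcal M\cap F'$ with $\bigoplus_i e^{(i)}_{11}\mathcal M e^{(i)}_{11}$ via exactly the matrix-unit isomorphism you describe. You supply a little more detail than the paper (which simply asserts the WLOG reduction and the final ``follows from the fact that $\mathcal M$ is type II$_1$''), but the argument is the same.
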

\begin{proof}
Without loss of generality, assume $1_{\mathcal M} \in F$, and decompose $F$ as $\bigoplus_{k=1}^K M_{n_k}$.
Then, writing $e^{(k)}_{ij}$ for the matrix units of the $k$-th summand of $F$, one has
\begin{equation}
\mathcal M\cap F'\cong \bigoplus_{k=1}^K e^{(k)}_{11}\mathcal Me^{(k)}_{11};
\end{equation}
via the map $x \mapsto (e^{(1)}_{11}xe^{(1)}_{11},\dots,e^{(K)}_{11}xe^{(K)}_{11})$ with inverse $(y_1,\cdots, y_K)\mapsto \sum_{k=1}^K\sum_{i=1}^{n_k}e^{(k)}_{i1}y_ke^{(k)}_{1i}$.
The result then follows from the fact that $\mathcal M$ is type II$_1$.
\end{proof}

When $\mathcal M$ is a finite von Neumann algebra with a fixed faithful normal trace $\tau$, we use the notation $\mathcal M^\omega$ to denote the tracial von Neumann algebra ultrapower,
\begin{equation}
\label{eq:vNUltrapower}
\mathcal M^\omega\coloneqq \ell^\infty(\M)/\{(x_n)_{n=1}^\infty\in\ell^\infty(\M):\lim_{n\rightarrow\omega}\|x_n\|_{2,\tau}=0\},
\end{equation}
which is naturally equipped with the trace given on representative sequences by $\tau_\omega((x_n)_{n=1}^\infty)=\lim_{n\rightarrow\omega}\tau(x_n)$.  This is the same notation as we use for the uniform tracial ultrapower of a $C^*$-algebra, motivated by the special case when $A$ has a unique trace $\tau$, where the uniform tracial ultrapower $A^\omega$ is (by Kaplansky's density theorem) nothing but the tracial von Neumann algebra ultrapower $(\pi_\tau(A)'')^\omega$. It will always be clear from context when we are applying a tracial von Neumann algebra ultrapower rather than a uniform tracial ultrapower.

With this setup we now record the McDuff property of hyperfinite type II$_1$ von Neumann algebras in the form of approximately central matrix embeddings.  When we come to use this result in Section \ref{sec:CPoU}, we will obtain hyperfiniteness through Connes' celebrated theorem.

\begin{proposition}\label{prop:UltrapowerMatrixEmbeddings}
Let $\M$ be a hyperfinite type II$_1$ von Neumann algebra with separable predual and fixed faithful normal trace $\tau$. Then for each $n\in\mathbb N$, there exists a unital embedding $M_n
 \rightarrow \M^\omega \cap \M'$.
\end{proposition}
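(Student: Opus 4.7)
My plan is to use hyperfiniteness to generate an approximately central matrix embedding by a diagonal construction, using the two lemmas recorded just above the proposition.

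First, since $\mathcal M$ has separable predual and is hyperfinite, there exists an increasing sequence $F_1 \subseteq F_2 \subseteq \cdots$ of finite dimensional unital $*$-subalgebras of $\mathcal M$ whose union is $\|\cdot\|_{2,\tau}$-dense in $\mathcal M$. (Separability of the predual yields a countable $\|\cdot\|_{2,\tau}$-dense subset of $\mathcal M$, which can be absorbed one-by-one into the finite dimensional subalgebras provided by hyperfiniteness.)

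Next, fix $n \in \mathbb N$. By Lemma \ref{Lem1.3}, each relative commutant $\mathcal M \cap F_k'$ is again type II$_1$, so by Lemma \ref{lem:UnitalEmbedding} we obtain, for each $k$, a unital $*$-embedding $\phi_k: M_n \to \mathcal M \cap F_k'$. Define $\phi: M_n \to \mathcal M^\omega$ by $\phi(x) = (\phi_k(x))_{k=1}^\infty$. Each $\phi_k$ is unital and contractive, so $\phi$ is a well-defined unital $*$-homomorphism into $\mathcal M^\omega$ (the unit of $\mathcal M^\omega$ is represented by the constant sequence $(1_{\mathcal M})$). Unitality forces injectivity since $M_n$ is simple.

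It remains to check that $\phi(M_n) \subseteq \mathcal M^\omega \cap \mathcal M'$. Fix $a \in \mathcal M$ and $\varepsilon > 0$, and choose $k_0$ and $b \in F_{k_0}$ with $\|a - b\|_{2,\tau} < \varepsilon$. For all $k \geq k_0$, one has $\phi_k(x) \in \mathcal M \cap F_k' \subseteq \mathcal M \cap F_{k_0}'$, so $[\phi_k(x), b] = 0$. Using $\|\cdot\|_{2,\tau}$-submultiplicativity (the analogue of \eqref{2normMultiplicative}) we get
\begin{equation}
\|[a,\phi_k(x)]\|_{2,\tau} = \|[a-b,\phi_k(x)]\|_{2,\tau} \leq 2\|x\|\,\|a-b\|_{2,\tau} < 2\|x\|\varepsilon
\end{equation}
for all $k \geq k_0$. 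Passing to the limit along $\omega$ in $\mathcal M^\omega$ and letting $\varepsilon \to 0$ yields $[a, \phi(x)] = 0$, as required.

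I do not anticipate a serious obstacle: the argument is a textbook diagonal construction, and both nontrivial ingredients (the existence of unital matrix embeddings into type II$_1$ corners, and the fact that the relative commutant of a finite dimensional subalgebra in a type II$_1$ algebra remains type II$_1$) are already in hand from Lemmas \ref{lem:UnitalEmbedding} and \ref{Lem1.3}. The only mild point of care is to ensure that the exhausting sequence $(F_k)$ can be chosen so that its union is $\|\cdot\|_{2,\tau}$-dense in $\mathcal M$, which is exactly what the definition of hyperfiniteness, combined with separability of the predual, supplies.
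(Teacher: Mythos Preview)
Your proposal is correct and follows essentially the same approach as the paper: choose an increasing sequence of finite dimensional subalgebras $(F_k)$ generating $\mathcal M$, use Lemmas \ref{Lem1.3} and \ref{lem:UnitalEmbedding} to get unital embeddings $\phi_k:M_n\to \mathcal M\cap F_k'$, and assemble these into $\mathcal M^\omega\cap\mathcal M'$. The only difference is cosmetic: the paper argues that the image of $\Phi$ commutes with each $F_k$ and hence with $\mathcal M$ (using that the commutant in $\mathcal M^\omega$ is $\|\cdot\|_{2,\tau_\omega}$-closed), whereas you spell out the $\|\cdot\|_{2,\tau}$-approximation explicitly.
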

\begin{proof}
Fix $n \in \mathbb{N}$. By hyperfiniteness, say $\M = \left(\bigcup_{k=1}^\infty F_k\right)''$ where $(F_k)_{k=1}^\infty$ is an increasing sequence of finite dimensional subalgebras. For each $k \in \mathbb{N}$, by Lemmas \ref{lem:UnitalEmbedding} and \ref{Lem1.3}, there exists a unital embedding $\phi_k:M_n
 \rightarrow \M \cap F_k'$. The image of the induced map $\Phi:M_n
 \rightarrow \M^\omega$ commutes with each $F_k \subseteq \M^\omega$, so commutes with $\M \subseteq \M^\omega$.  
\end{proof}

\medskip

We end the preliminaries by recording two facts related to orthogonality and order zero maps.  Recall that positive elements in a $C^*$-algebra are said to be \emph{orthogonal} if $ab = 0$. We record the following elementary fact for future use. 

\begin{proposition}\label{lem:ProjectionsVsContractions}
	Let $A$ be a unital $C^*$-algebra. Given positive elements $e_1,\ldots,e_n \in A$ with $\sum_{i=1}^n e_i = 1_A$, the following are equivalent:
\begin{enumerate}
	\item The elements $e_1,\ldots,e_n$ are pairwise orthogonal.
	\item The elements $e_1,\ldots,e_n$ are projections.  
\end{enumerate}
\end{proposition}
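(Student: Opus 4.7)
\medskip

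The proposal is to prove each direction separately, using only elementary manipulations with positivity and the identity $\sum_i e_i = 1_A$.

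For the direction (ii) $\Rightarrow$ (i), I would exploit positivity through a ``sum of positives is zero implies each is zero'' argument. Fix $i$. For every $j \neq i$, the element $e_i^{1/2} e_j e_i^{1/2}$ is positive. Summing over $j \neq i$ and using $\sum_j e_j = 1_A$ gives
\begin{equation}
\sum_{j \neq i} e_i^{1/2} e_j e_i^{1/2} \;=\; e_i^{1/2}(1_A - e_i)e_i^{1/2} \;=\; e_i - e_i^2 \;=\; 0,
\end{equation}
where the last equality uses that $e_i$ is a projection. As each summand is positive, we conclude $e_i^{1/2} e_j e_i^{1/2} = 0$ for all $j \neq i$. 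Then $\|e_j^{1/2} e_i^{1/2}\|^2 = \|e_i^{1/2} e_j e_i^{1/2}\| = 0$, so $e_j^{1/2} e_i^{1/2} = 0$, and hence $e_j e_i = 0$, giving pairwise orthogonality.

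For the direction (i) $\Rightarrow$ (ii), the argument is essentially immediate: for each $i$, using $\sum_j e_j = 1_A$ and orthogonality,
\begin{equation}
e_i \;=\; e_i \cdot 1_A \;=\; \sum_{j} e_i e_j \;=\; e_i^2,
\end{equation}
since $e_i e_j = 0$ for $j \neq i$. As $e_i$ is self-adjoint with $e_i^2 = e_i$, it is a projection.

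Neither direction poses a real obstacle; the only subtlety is the use of positivity in (ii) $\Rightarrow$ (i) to deduce orthogonality from the identity $e_i(1_A - e_i) = 0$, which would be false without the positivity of the $e_j$'s.
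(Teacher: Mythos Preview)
Your proof is correct. The paper states this proposition as an elementary fact without proof; the source does contain a commented-out proof which is essentially the same as yours. Your (i) $\Rightarrow$ (ii) direction is identical to theirs. For (ii) $\Rightarrow$ (i), the paper's (commented) argument observes $e_i \leq 1 - e_j$ and computes $e_j e_i e_j \leq e_j(1-e_j)e_j = 0$, while you sum the positive elements $e_i^{1/2}e_j e_i^{1/2}$ to zero and conclude each vanishes; these are minor rearrangements of the same positivity trick. A small cosmetic point: since the $e_i$ are already projections in (ii), $e_i^{1/2} = e_i$, so the square roots are unnecessary.
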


\begin{comment}\begin{proof}
(i) $\Rightarrow$ (ii):	Suppose $e_1,\ldots,e_n$ are pairwise orthogonal. For $i=1,\ldots,n$, we have $e_i = e_i\sum_{j=1}^n e_j = \sum_{j=1}^n e_ie_j = e_i^2$. Hence, $e_1,\ldots,e_n$ are projections. 
 
(ii) $\Rightarrow$ (i):	Suppose $e_1,\ldots,e_n$ are projections. For distinct $i,j \in \lbrace 1,\ldots,n \rbrace$, we have $e_i \leq \sum_{k\neq j} e_k = 1 - e_j$. Hence, $|e_ie_j|^2 = e_je_i^2e_j = e_je_ie_j \leq e_j(1-e_j)e_j = 0$. Therefore, $e_ie_j = 0$.   
\end{proof}
\end{comment}

A completely positive map $\phi:A \rightarrow B$ between $C^*$-algebras is said to be \emph{order zero} if it preserves orthogonality, i.e., $\phi(a)\phi(b) = 0$ whenever $ab = 0$; see \cite{WZ09} for the structure theory of order zero maps. For us, a key tool is the following order zero lifting theorem, essentially due to Loring (\cite[Theorem 4.9]{Lo93}), although we use a reformulation from \cite{Wi09}.

\begin{proposition}[Order zero lifting, {\cite[Proposition 1.2.4]{Wi09}}]
\label{prop:OrderZeroLifting}
Let $A$ be a $C^*$-algebra and $I \subseteq A$ an ideal.
Then for any $n \in \mathbb N$, every completely positive and contractive (c.p.c.) order zero map $M_n
 \to A/I$ lifts to a c.p.c.\ order zero map $M_n
 \to A$.
\end{proposition}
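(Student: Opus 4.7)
The plan is to reduce the statement to Loring's projectivity theorem for the cone over $M_n$, via the structure theorem for c.p.c.\ order zero maps established in \cite{WZ09}.

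First, I would invoke the natural bijective correspondence between c.p.c.\ order zero maps $\phi : M_n \to B$ and $*$-homomorphisms $\pi : CM_n \to B$, where $CM_n := C_0((0,1]) \otimes M_n$ is the cone over $M_n$. Under this correspondence, $\phi$ is recovered from $\pi$ via $\phi(x) = \pi(\iota \otimes x)$, with $\iota \in C_0((0,1])$ the identity function. Crucially, this correspondence is natural with respect to composition with a quotient map: if $q : A \to A/I$ is the quotient and $\tilde\pi : CM_n \to A$ satisfies $q \circ \tilde\pi = \pi$, then the associated order zero maps satisfy $q \circ \tilde\phi = \phi$. Hence the problem of lifting $\phi : M_n \to A/I$ through $q$ is equivalent to lifting the associated $\pi_\phi : CM_n \to A/I$ to a $*$-homomorphism into $A$.

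Next, I would invoke Loring's projectivity theorem \cite[Theorem~4.9]{Lo93}: the cone $CM_n$ is projective in the category of $C^*$-algebras, so for any surjection $q : A \to A/I$, every $*$-homomorphism $\pi : CM_n \to A/I$ admits a $*$-homomorphism lift $\tilde\pi : CM_n \to A$ with $q \circ \tilde\pi = \pi$. Applying this to $\pi_\phi$ and translating back via the correspondence yields the desired c.p.c.\ order zero lift $\tilde\phi : M_n \to A$, defined by $\tilde\phi(x) := \tilde\pi(\iota \otimes x)$.

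The main obstacle --- projectivity of the cone $CM_n$ --- is precisely what Loring's theorem provides; once this input is in hand, the argument is essentially a matter of translating between the two equivalent ways of viewing an order zero map from a matrix algebra. There is nothing subtle to be verified on the translation side, since the structure theorem gives a functorial bijection, so the proof is genuinely a two-ingredient argument.
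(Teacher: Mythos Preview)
Your proposal is correct and matches the paper's own attribution: the paper does not give a proof but simply records the result as ``essentially due to Loring \cite[Theorem 4.9]{Lo93}'' in the reformulation of \cite[Proposition 1.2.4]{Wi09}, which is precisely the combination of the structure theorem for c.p.c.\ order zero maps from \cite{WZ09} with Loring's projectivity of $C_0((0,1])\otimes M_n$ that you outline.
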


\section{Equivalent formulations of uniform property $\Gamma$}
\label{sec:EquivalentFormulations}

This section is concerned with establishing some equivalent formulations of uniform property $\Gamma$ in the spirit of Dixmier's original analysis of central sequence algebras of II$_1$ factors in \cite{Di69}. We begin by recalling the definition of uniform property $\Gamma$ from \cite[Definition 2.1]{CETWW}.

\begin{definition}[{\cite[Definition 2.1]{CETWW}}]\label{defn:Gamma}
Let $A$ be a separable $C^*$-algebra with $T(A)$ nonempty and compact.
Then $A$ is said to have \emph{uniform property $\Gamma$} if for all $n\in\mathbb N$, there exist projections $p_1,\dots,p_n\in A^\omega \cap A'$ summing to $1_{A^\omega}$,\footnote{And therefore pairwise orthogonal, as noted in Proposition \ref{lem:ProjectionsVsContractions}.} such that
\begin{equation}\label{gamma.def}
\tau(ap_i)=\frac{1}{n}\tau(a),\quad a\in A,\ \tau\in T_\omega(A),\ i=1,\dots,n.
\end{equation}
\end{definition}

The definition above does not depend on the choice of $\omega$, and indeed standard methods show that it has an equivalent local formulation.

\begin{proposition}\label{P2.2}
Let $A$ be a separable $C^*$-algebra with $T(A)$ nonempty and compact.
Then the following are equivalent:
\begin{enumerate}
\item $A$ has uniform property $\Gamma$. 
\item For any finite subset $\mathcal F\subset A$, any $\epsilon>0$, and any $n \in \mathbb N$, there exist pairwise orthogonal positive contractions $e_1,\dots,e_n \in A$ such that for $i=1,\dots,n$ and $a\in\mathcal F$, we have \begin{equation}\label{e2.2}
\|[e_i,a]\|_{2,T(A)} <\epsilon \text{ and } \sup_{\tau \in T(A)} |\tau(ae_i)-\frac1n\tau(a)| < \epsilon.
\end{equation}
\item 
For any finite subset $\mathcal F\subset A$, any $\epsilon>0$, and any $n \in \mathbb N$, there exist pairwise orthogonal positive contractions $e_1,\dots,e_n \in A$ such that for $i=1,\dots,n$ and $a\in\mathcal F$, we have 
\begin{equation}\label{e2.3}
\|[e_i,a]\|<\epsilon \text{ and } \sup_{\tau \in T(A)} |\tau(ae_i)-\frac1n\tau(a)| < \epsilon.
\end{equation}
\end{enumerate}
\end{proposition}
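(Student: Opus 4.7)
The implication (iii)$\Rightarrow$(ii) is immediate from $\|\cdot\|_{2,T(A)}\leq\|\cdot\|$, so the substance is in (ii)$\Rightarrow$(i) and (i)$\Rightarrow$(iii).  For (ii)$\Rightarrow$(i), my plan is the standard ultrapower reindexing: fix a countable dense subset $(a_k)_{k=1}^\infty\subseteq A$, and for each $k$ apply (ii) with $\mathcal{F}_k=\{a_1,\ldots,a_k\}$ and $\epsilon_k=1/k$.  The resulting pairwise orthogonal positive contractions $e_1^{(k)},\ldots,e_n^{(k)}\in A$ assemble into pairwise orthogonal positive contractions $\bar{e}_1,\ldots,\bar{e}_n\in A^\omega\cap A'$ satisfying $\tau(a\bar{e}_i)=\tfrac1n\tau(a)$ for all $a\in A$ and $\tau\in T_\omega(A)$.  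The step requiring care is upgrading these to projections summing to $1_{A^\omega}$: setting $\bar{e}=\sum_i\bar{e}_i$, the trace identity gives $\tau(a\bar{e})=\tau(a)$ for $a\in A$; letting $a$ run over an approximate unit of $A$ and combining Proposition~\ref{UnitalProp} with the Cauchy--Schwarz inequality for tracial states yields $\tau(\bar{e})=1$ for every limit trace, whence $\tau((1-\bar{e})^2)\leq\tau(1-\bar{e})=0$.  Since $\|\cdot\|_{2,T_\omega(A)}$ is a norm on $A^\omega$, this forces $\bar{e}=1_{A^\omega}$, and Proposition~\ref{lem:ProjectionsVsContractions} upgrades the $\bar{e}_i$ to projections.

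For (i)$\Rightarrow$(iii), I would combine central surjectivity with order zero lifting.  The projections $p_1,\ldots,p_n$ from (i) form the image of a $*$-homomorphism $\phi\colon\mathbb{C}^n\to A^\omega\cap A'$ sending the $i$-th minimal projection to $p_i$.  By Lemma~\ref{CentSurject}, $A^\omega\cap A'$ is a quotient of $A_\omega\cap A'$, so Proposition~\ref{prop:OrderZeroLifting} produces a c.p.c.\ order zero lift $\tilde{\phi}\colon\mathbb{C}^n\to A_\omega\cap A'$.  Applying Proposition~\ref{prop:OrderZeroLifting} once more (to lift the composite $\mathbb{C}^n\to A_\omega$ through $\ell^\infty(A)$) yields representative sequences $(e_i^{(k)})_k$ with $(e_1^{(k)},\ldots,e_n^{(k)})$ \emph{exactly} a tuple of pairwise orthogonal positive contractions in $A$ at every index $k$, while $\tilde{\phi}$ landing in $A_\omega\cap A'$ gives $\lim_{k\to\omega}\|[e_i^{(k)},a]\|=0$ for each $a\in A$.

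The main obstacle, and the step where compactness of $T(A)$ is genuinely used, is converting the ultrapower identity $\tau(a\tilde{e}_i)=\tfrac1n\tau(a)$ (valid for every limit trace $\tau$) into the uniform-in-$T(A)$ estimate in (iii) at the level of the representatives.  I would argue by contradiction: if $\sup_{\tau\in T(A)}|\tau(ae_i^{(k)})-\tfrac1n\tau(a)|$ failed to tend to zero along $\omega$ for some $a\in A$, then compactness of $T(A)$ and weak-$*$ continuity of the relevant functional on $T(A)$ let me select traces $\tau^{(k)}\in T(A)$ attaining the suprema on an $\omega$-large set, and these assemble into a limit trace on $A_\omega$ violating the ultrapower identity.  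Combining this uniform trace bound with the operator-norm approximate centrality and the exact orthogonality, a standard diagonal extraction (or Kirchberg's $\epsilon$-test) produces, for any given $\mathcal{F}$ and $\epsilon$, a single index $k$ witnessing (iii).
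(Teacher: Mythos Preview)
Your proof is correct and follows the same route as the paper---central surjectivity plus lifting of orthogonal positive contractions for (i)$\Rightarrow$(iii), and a direct ultrapower construction (where the paper simply invokes Kirchberg's $\epsilon$-test) for (ii)$\Rightarrow$(i)---with the added benefit that you spell out both the uniform-in-$\tau$ contradiction argument and the upgrade to projections summing to $1_{A^\omega}$, which the paper leaves implicit. One small point: Proposition~\ref{prop:OrderZeroLifting} as stated is for $M_n$, not $\mathbb{C}^n$; the paper instead cites \cite[Theorem~4.6]{Lo93} directly for the liftability of pairwise orthogonal positive contractions, which is the result you actually need.
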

\begin{proof}
As $\|\cdot\|_{2,T(A)}\leq\|\cdot\|$, we have (iii)$\Rightarrow$(ii). Moreover, (ii)$\Rightarrow$(i) is a routine application of Kirchberg's $\epsilon$-test (\cite[Lemma A.1]{Kir06}) which we omit. If (i) holds, fix $n\in\mathbb N$, and $p_1,\dots,p_n\in A^\omega\cap A'$ witnessing uniform property $\Gamma$. Then we can use central surjectivity (Lemma \ref{CentSurject}) and liftability of orthogonal positive contractions (see \cite[Theorem 4.6]{Lo93}) to find pairwise orthogonal positive contractions $q_1,\dots,q_n\in A_\omega\cap A'$ lifting $p_1,\dots,p_n$.  These can in turn be lifted to pairwise orthogonal representing sequences $(e_{1,l})_{l=1}^\infty,\dots,(e_{n,l})_{l=1}^\infty$ in $\ell^\infty(A)$. Then the set of $l$ such that $e_{1,l},\dots,e_{n,l}$ satisfies (\ref{e2.3}) lies in $\omega$, and hence is non-empty, establishing (iii).
\end{proof}

The next result is our analogue of \cite[Proposition 1.10]{Di69}, which says that if a central sequence algebra of a II$_1$ factor is non-trivial, then it is automatically diffuse, i.e., it contains no minimal projections.  In particular, the a priori weaker condition (ii) ensures that it is only necessary to consider $n=2$ in the definition of uniform property $\Gamma$.

\begin{proposition}
\label{prop:GammaEquivalence}
Let $A$ be a separable $C^*$-algebra with $T(A)$ nonempty and compact.
The following are equivalent.
\begin{enumerate}
\item $A$ has uniform property $\Gamma$.
\item For some $0 < \lambda < 1$, there exists a projection $p \in A^\omega \cap A'$ such that
	\begin{equation}\label{eqn:Gamma3}
		\tau(ap)= \lambda\tau(a),\quad a\in A,\ \tau\in T_\omega(A).
	\end{equation}
\item For any $\|\cdot\|_{2,T_\omega(A)}$-separable subalgebra $S \subset A^\omega$, there is a unital $^*$-homomorphism $\phi:L^\infty([0,1]) \rightarrow A^\omega \cap S'$ such that
	\begin{equation}\label{eqn:Gamma2}
		\tau(a\phi(f))= \tau(a)\hspace*{-1mm} \int_0^1 \hspace*{-1mm} f(t)dt,\quad a\in S,\ f \in L^\infty([0,1]),\ \tau\in T_\omega(A).
	\end{equation}
\end{enumerate}
\end{proposition}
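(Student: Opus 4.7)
The implications (iii) $\Rightarrow$ (i) and (i) $\Rightarrow$ (ii) are immediate. For (iii) $\Rightarrow$ (i), apply (iii) with $S := \iota(A)$ and set $p_i := \phi(\chi_{[(i-1)/n, i/n]})$ for $i = 1, \dots, n$; these are pairwise orthogonal projections summing to $1_{A^\omega}$, and \eqref{eqn:Gamma2} yields \eqref{gamma.def}. For (i) $\Rightarrow$ (ii), take $n = 2$, $\lambda = 1/2$, and $p := p_1$.

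The substance is in (ii) $\Rightarrow$ (iii). The first step is to upgrade (ii) from commutation with $A$ to commutation with an arbitrary $\|\cdot\|_{2,T_\omega(A)}$-separable subalgebra of $A^\omega$. By a Kirchberg $\epsilon$-test argument as in the proof of Proposition \ref{P2.2}, (ii) is equivalent to the following local version: for every finite $F \subset A$ and every $\epsilon > 0$, there exists a positive contraction $q \in A$ with $\|q^2 - q\|_{2, T(A)} < \epsilon$, $\|[q, a]\|_{2, T(A)} < \epsilon$ for $a \in F$, and $\sup_{\tau \in T(A)} |\tau(aq) - \lambda\tau(a)| < \epsilon$ for $a \in F$. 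Given any $\|\cdot\|_{2, T_\omega(A)}$-separable subalgebra $S \subset A^\omega$ (which I would enlarge to include $\iota(A)$), pick a $\|\cdot\|_{2, T_\omega(A)}$-dense sequence $(s_j)_{j \ge 1}$ in $S$ with lifts $s_j = (s_{j, k})_{k=1}^\infty \in \ell^\infty(A)$, and apply the local form with $F_k := \{s_{1,k}, \dots, s_{k,k}\}$ and $\epsilon_k := 1/k$. The resulting sequence $(q_k)_{k=1}^\infty$ represents a projection $q \in A^\omega \cap S'$ satisfying $\tau(sq) = \lambda \tau(s)$ for all $s \in S$ and $\tau \in T_\omega(A)$, the trace identity extending from the dense set $(s_j)$ to $S$ via \eqref{2normMultiplicative}.

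Next, iterate: set $S_0 := S$ and inductively $S_n := C^*(S_{n-1}, q_n)$, where $q_n$ is produced by the reindexing step applied to $S_{n-1}$. This yields a commuting sequence $(q_n)_{n \ge 1}$ in $A^\omega \cap S'$ with
\begin{equation}
\tau\bigl(s\, q_1^{\epsilon_1} \cdots q_n^{\epsilon_n}\bigr) = \tau(s) \prod_{i=1}^n \bigl(\epsilon_i \lambda + (1-\epsilon_i)(1-\lambda)\bigr), \quad s \in S,\ \tau \in T_\omega(A),
\end{equation}
where $q^1 := q$, $q^0 := 1 - q$, and $\epsilon_i \in \{0,1\}$. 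Since $\lambda \in (0,1)$, each such product has strictly positive trace, hence is nonzero, so the $q_n$ generate a unital copy of $C(\{0,1\}^\mathbb{N})$ inside $A^\omega \cap S'$. This provides a unital $*$-homomorphism $\phi_0: C(\{0,1\}^\mathbb{N}) \to A^\omega \cap S'$ satisfying $\tau(s\,\phi_0(g)) = \tau(s) \int g\, d\mu_\lambda$ for $g \in C(\{0,1\}^\mathbb{N})$, where $\mu_\lambda$ is the Bernoulli product measure with weight $\lambda$ on $\{1\}$. Fixing a measure-preserving isomorphism $([0,1], dx) \cong (\{0,1\}^\mathbb{N}, \mu_\lambda)$ identifies $L^\infty([0,1]) \cong L^\infty(\{0,1\}^\mathbb{N}, \mu_\lambda)$, and I would extend $\phi_0$ to a unital $*$-homomorphism $\phi: L^\infty([0,1]) \to A^\omega \cap S'$ by exploiting countable saturation of $A^\omega$: for $f \in L^\infty$, realise $\phi(f) \in A^\omega \cap S'$ of operator norm at most $\|f\|_\infty$ as the $\|\cdot\|_{2, T_\omega(A)}$-limit of $\phi_0(f_k)$ for step functions $f_k$ with $\|f_k\|_\infty \le \|f\|_\infty$ and $f_k \to f$ in $L^2(\mu_\lambda)$, the $*$-algebra relations transferring via tracial continuity and a simultaneous saturation argument.

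The conceptual engine is the reindexing step, which upgrades (ii) (a statement relative to $A$) to its analogue relative to an arbitrary separable subalgebra of $A^\omega$; without this, the inductive construction of the $q_n$ and the subsequent Cantor-set embedding are not available. The main technical obstacle is the final extension of $\phi_0$ from $C(\{0,1\}^\mathbb{N})$ to the non-separable $C^*$-algebra $L^\infty([0,1])$, which requires careful use of the countable-saturation properties of the ultrapower $A^\omega$ to realise $\phi(f)$ simultaneously for a dense family of $f$ while preserving all algebraic relations and operator-norm bounds.
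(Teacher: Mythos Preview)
Your argument is correct and follows the same route as the paper's: the reindexing upgrade of (ii) to arbitrary $\|\cdot\|_{2,T_\omega(A)}$-separable $S$, followed by the iterative construction of commuting projections $q_n$, is exactly what the paper does (stated there as condition (ii$'$)).

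The one place where you diverge is the final extension from the Cantor algebra to $L^\infty([0,1])$, which you frame as a delicate saturation argument requiring simultaneous realisation of $\phi(f)$ for a dense family while preserving algebraic relations and norm bounds. The paper handles this more cleanly: by \cite[Lemma 1.6]{CETWW}, the unit ball of $A^\omega$ is $\|\cdot\|_{2,T_\omega(A)}$-complete, hence so is the unit ball of the closed subalgebra $A^\omega \cap S'$. One then extends the map $\bigodot_1^\infty \mathbb C^2 \to A^\omega \cap S'$ by $\|\cdot\|_{2,\tau_\lambda^{(\infty)}}$--$\|\cdot\|_{2,T_\omega(A)}$ continuity on bounded sets, landing in the tracial completion $\overline{\bigotimes}_1^\infty(\mathbb C^2,\tau_\lambda)$; since multiplication and adjoint are jointly continuous in these seminorms on bounded sets, the extension is automatically a $^*$-homomorphism. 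No separate saturation or relation-preservation argument is needed. This is worth internalising, as the same completeness fact streamlines many similar constructions.
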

\begin{proof}
(i) $\Rightarrow$ (ii): Taking $n \coloneqq  2$ in Definition \ref{defn:Gamma}, we get a projection $p \coloneqq  p_1 \in A^\omega \cap A'$ such that \eqref{eqn:Gamma3} holds with $\lambda \coloneqq  \frac{1}{2}$.

(iii) $\Rightarrow$ (i): Take $S \coloneqq  A$ and let $\phi:L^\infty([0,1]) \rightarrow A^\omega \cap A'$ be a unital $^*$-homomorphism satisfying \eqref{eqn:Gamma2}.
Fix $n \in \mathbb{N}$. Set $p_i \coloneqq  \phi(\chi_{i})$, where $\chi_i$ is the indicator function of the interval $[(i-1)/n,i/n]$. Then $p_1,\ldots,p_n \in A^\omega \cap A'$ are projections satisfying \eqref{gamma.def}. 

(ii) $\Rightarrow$ (iii): Fix $\lambda$ such that (ii) holds and define a trace $\tau_\lambda:\mathbb{C}^2 \to \mathbb{C}$ by $\tau_\lambda(a,b)\coloneqq \lambda a + (1-\lambda) b$. Our first step is to show that (ii)$\Rightarrow$(ii$'$) below.
\begin{itemize}
	\item[(ii$'$)] For every $\|\cdot\|_{2,T_\omega(A)}$-separable subalgebra $S$ of $A^\omega$, there exists a unital $^*$-homomorphism $\phi_S:\mathbb{C}^2 \rightarrow A^\omega \cap S'$ such that
	\begin{equation}\label{prop:GammaEquivalence.new1}
		\tau(a\phi_S(b))= \tau(a)\tau_\lambda(b),\quad a\in S,\ b \in \mathbb{C}^2,\ \tau\in T_\omega(A).
	\end{equation}
\end{itemize} 
Indeed, given such an $S$, let $(s^{(m)})_{m=1}^\infty$ be a $\|\cdot\|_{2,T_\omega(A)}$-dense sequence in $S$, and let $(s^{(m)}_n)_{n=1}^\infty$ be a representative sequence of $s^{(m)}$ for each $m$.
 Let $p$ be as in (ii) and let $(p_n)_{n=1}^\infty$ be a representative sequence of positive contractions for $p$ so that
\begin{align}
\lim_{n\rightarrow\omega}\|p_n-p_n^2\|_{2,T(A)}&=0, &&\notag \\
\notag
\lim_{n \rightarrow \omega} \|[p_n , a]  \|_{2,T(A)} &= 0, &&\text{and} \\
\lim_{n\rightarrow\omega}\sup_{\tau\in T(A)}|\tau(ap_n)-\lambda\tau(a)|&=0,&& a \in A.\qquad\qquad
\end{align}
Thus, for each $n\in\mathbb N$, we can find some $r(n)\in\mathbb N$, such that
\begin{align}
\notag
\|p_{r(n)}-p_{r(n)}^2\|_{2,T(A)}&\leq\frac{1}{n}, \\
\| [p_{r(n)}, s^{(m)}_n] \|_{2,T(A)} & \leq \frac{1}{n}, && \text{and}\notag \\
\sup_{\tau\in T(A)}|\tau(s^{(m)}_np_{r(n)})-\lambda\tau(s^{(m)}_n)|&\leq\frac{1}{n},&& m=1,\dots,n. 
\end{align}
Then the element $q$ in $A^\omega$ represented by $(p_{r(n)})_{n=1}^\infty$ is a projection in $A^\omega\cap S'$ with 
\begin{equation}
\tau(aq)=\lambda\tau(a),\quad a\in S,\ \tau\in T_\omega(A).
\end{equation}
Finally, we take $\phi_S:\mathbb C^2 \to A^\omega\cap S'$ to be the unital $^*$-homomorphism determined by $\phi_S(1,0) = q$. This establishes (ii$'$).

We now deduce (iii) from (ii$'$) by an iterative argument. To this end, fix a $\|\cdot\|_{2,T_\omega(A)}$-separable subalgebra $S \subset A^\omega$ as in part (iii), and define $S_0\coloneqq S$. For $i\geq 1$, recursively use condition (ii$'$) to construct $\phi_i$ satisfying (ii$'$) for the set $S_{i-1}$ and take $S_i\coloneqq C^*(S_{i-1}\cup \phi_i(\mathbb C^2))$. Since the $\phi_i$ commute by construction, together they define a unital $^*$-homomorphism $\phi:\bigodot_1^\infty \mathbb{C}^2 \rightarrow A^\omega \cap S'$ (where $\bigodot_1^\infty \mathbb C^2$ denotes the infinite algebraic tensor product of copies of $\mathbb C^2$) such that
\begin{equation} 
\label{eqn:GammaEquivalence1}
		\tau(a\phi(b))= \tau(a)\tau_\lambda^{(\infty)}(b),\quad a\in S,\ b \in \bigodot_1^\infty \mathbb{C}^2,\ \tau\in T_\omega(A),
\end{equation}
where $\tau_\lambda^{(\infty)}$ is the infinite product of $\tau_\lambda$.  Since the unit ball of $A^\omega$ is $\|\cdot\|_{2,T_\omega(A)}$-complete by \cite[Lemma 1.6]{CETWW}, so too is the unit ball of $A^\omega\cap S'$. Therefore we can extend $\phi$ by $\|\cdot\|_{2,\tau_\lambda}$-$\|\cdot\|_{2,T_\omega(A)}$-continuity (on bounded sets) to a unital $^*$-homomorphism from the tracial von Neumann algebra $(\mathcal M,\tau_\lambda^{(\infty)})\coloneqq \overline{\bigotimes}_1^\infty (\mathbb{C}^2,\tau_\lambda)$ with \eqref{eqn:GammaEquivalence1} extending to $b \in \mathcal M$.
As $0 < \lambda < 1$, $\M$ is diffuse. By uniqueness of the atomless standard probability space, $(\mathcal M,\tau_\lambda^\infty)$ is isomorphic to $L^\infty([0,1])$ (with Lebesgue measure). This gives \eqref{eqn:Gamma2}. 
\end{proof}

\begin{remark}
Note that the proof of Proposition \ref{P2.2} also gives local versions of condition (ii) above, e.g., a separable $C^*$-algebra $A$ with $T(A)$ non-empty and compact has uniform property $\Gamma$ if and only if for some $\lambda\in (0,1)$, for every finite subset $\mathcal F\subset A$ and every $\epsilon>0$, there is a positive contraction $e\in A$ such that\begin{align}
\|e^2-e\|_{2,T(A)}&<\epsilon,\ \|[e,a]\| <\epsilon,\ \text{and}\nonumber\\ \sup_{\tau \in T(A)} |\tau(ae)-\lambda\tau(a)| &< \epsilon,\ a\in\mathcal F.
\end{align}
\end{remark}

\section{Bauer simplices and tracial factorisation}
\label{sec:Bauer}

In this section, we set out how uniform property $\Gamma$ simplifies when $T(A)$ is a Bauer simplex, i.e., when the extreme boundary $\partial_eT(A)$ is compact (and non-empty).  Uniform property $\Gamma$ requires that arbitrary finite subsets of $A$ can be approximately divided in trace in an approximately central fashion.  When $T(A)$ is Bauer, it is only necessary to tracially divide the unit (as in Corollary \ref{cor:ReductionToAis1} below); since the division is tracial this can be phrased without explicit reference to a unit, and indeed it applies equally to non-unital $A$ (provided $T(A)$ is compact and non-empty).

 At the heart of this observation is the following `tracial factorisation' result for central sequences on extreme traces.  

\begin{proposition}[{cf.\ \cite[Lemma 4.2(i)]{Sa12} and \cite[Proposition 4.3.6]{Ev18}}]
\label{prop:BauerFactorization}
Let $A$ be a separable $C^*$-algebra with $T(A) \neq \emptyset$ and let $b=(b_n)_{n=1}^\infty \in A_\omega$.  If for some $\tau\in \partial_eT(A)$, we have $\lim_{n\rightarrow\omega} \|ab_n - b_na\|_{2,\tau} = 0$ for all $a \in A$, then 
\begin{equation}
\label{eq:BauerFactorization}
	\lim_{n \rightarrow \omega} |\tau(ab_n) - \tau(a)\tau(b_n)| = 0,
\end{equation}
for all $a \in A$.
Moreover, if $K \subseteq \partial_e T(A)$ is compact and we have $\lim_{n\to \omega} \|ab_n-b_na\|_{2,K} = 0$ for all $a \in A$, then
\begin{equation}
	\tau(ab)=\tau(a)\tau(b)
\end{equation}
holds for any limit trace $\tau \in T_\omega(A)$ arising from a sequence of traces in $K$, i.e., \eqref{eq:BauerFactorization} converges uniformly on $K$ (for fixed $a$).
\end{proposition}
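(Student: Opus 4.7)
My plan is to prove both statements via the tracial von Neumann algebra ultrapower/ultraproduct machinery, in each case reducing the desired factorisation to the fact that every bounded normal tracial functional on a finite factor is a scalar multiple of the canonical trace.

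For the first statement, I would fix $\tau \in \partial_eT(A)$ and set $\mathcal M \coloneqq \pi_\tau(A)''$, a finite factor as $\tau$ is extremal; I also write $\tau$ for the induced faithful normal trace on $\mathcal M$. Passing to the tracial von Neumann algebra ultrapower $\mathcal M^\omega$ of \eqref{eq:vNUltrapower} with trace $\tau_\omega$, the sequence $(\pi_\tau(b_n))_{n=1}^\infty$ represents an element $\tilde b \in \mathcal M^\omega$. The hypothesis translates to $[\pi_\tau(a),\tilde b] = 0$ in $\mathcal M^\omega$ for each $a \in A$, which $\|\cdot\|_{2,\tau}$-Kaplansky density upgrades to $\tilde b \in \mathcal M^\omega \cap \mathcal M'$. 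The functional $\phi: x \mapsto \tau_\omega(x\tilde b)$ on $\mathcal M$ is normal (by WOT-continuity) and tracial (by the commutation just established), so it equals $\tau_\omega(\tilde b)\cdot\tau$; evaluating at $x = \pi_\tau(a)$ yields $\lim_{n\to\omega}\tau(ab_n) = \tau(a)\lim_{n\to\omega}\tau(b_n)$, which converts routinely to \eqref{eq:BauerFactorization} using the elementary properties of ultrafilter limits of bounded sequences.

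For the moreover, I would take an arbitrary sequence $(\tau_n)_{n=1}^\infty$ in $K$ and set $\tau_\infty \coloneqq \lim_{n\to\omega}\tau_n$ (weak-$*$ limit), noting that $\tau_\infty \in K\subseteq\partial_eT(A)$ by closedness of $K$, so $\tau_\infty$ is extremal. Writing $\mathcal M_n \coloneqq \pi_{\tau_n}(A)''$, I form the tracial von Neumann algebra ultraproduct $\mathcal N \coloneqq \prod_{n\to\omega}(\mathcal M_n,\tau_n)$ with its canonical trace $\tau_{\mathcal N}$, and consider the $*$-homomorphism $\iota: A \to \mathcal N$, $\iota(a) \coloneqq (\pi_{\tau_n}(a))_{n=1}^\infty$. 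Since $\tau_{\mathcal N}(\iota(a^*a)) = \tau_\infty(a^*a)$, GNS uniqueness identifies $\iota(A)'' \subseteq \mathcal N$ with $\pi_{\tau_\infty}(A)''$, a finite factor. The element $\tilde b \coloneqq (\pi_{\tau_n}(b_n))_{n=1}^\infty \in \mathcal N$ commutes with $\iota(A)$ thanks to the $\|\cdot\|_{2,K}$-hypothesis (using $\|\cdot\|_{2,\tau_n}\le\|\cdot\|_{2,K}$), hence with all of $\iota(A)''$ by Kaplansky density. Re-running the tracial-functional argument on the factor $\iota(A)''$ gives $\tau_{\mathcal N}(\iota(a)\tilde b) = \tau_{\mathcal N}(\tilde b)\tau_{\mathcal N}(\iota(a))$, and unpacking this identity yields $\lim_{n\to\omega}|\tau_n(ab_n) - \tau_n(a)\tau_n(b_n)| = 0$, exactly $\tau(ab) = \tau(a)\tau(b)$ for the limit trace $\tau \in T_\omega(A)$ coming from $(\tau_n)$.

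To finish, uniform convergence of \eqref{eq:BauerFactorization} on $K$ follows by contradiction: if $\sup_{\sigma\in K}|\sigma(ab_n) - \sigma(a)\sigma(b_n)|$ failed to tend to $0$ along $\omega$, selecting $\tau_n \in K$ witnessing the failure would contradict the preceding paragraph applied to this sequence. The main subtlety in the whole plan is the identification of $\iota(A)''$ as a finite factor inside the (possibly non-factor) ultraproduct $\mathcal N$; this is what licenses the single-factor tracial-functional argument at the variable-trace level, and it hinges crucially on compactness of $K$ to place the weak-$*$ limit $\tau_\infty$ back inside the extremal boundary.
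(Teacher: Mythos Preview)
Your proof is correct and rests on the same underlying principle as the paper's---that a bounded tracial functional dominated by (or compatible with) an extremal trace must be a scalar multiple of it---but you package it differently. The paper argues entirely at the $C^*$-level: it proves only the uniform statement (which implies the pointwise one), picks a sequence $(\tau_n)$ in $K$ witnessing failure, notes that $\tau_A\coloneqq\lim_{n\to\omega}\tau_n\in K\subseteq\partial_eT(A)$ by compactness, observes that $\sigma(x)\coloneqq\tau(xb)$ defines a positive tracial functional on $A$ with $\sigma\le\|b\|\tau_A$, and then uses extremality of $\tau_A$ in the simplex $T(A)$ directly (a short convexity argument) to force $\sigma=\alpha\tau_A$; an approximate-unit computation with Dini's theorem identifies $\alpha=\tau(b)$. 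You instead pass to the tracial von Neumann ultrapower (respectively ultraproduct), identify $\iota(A)''$ with $\pi_{\tau_\infty}(A)''$ via GNS uniqueness, and invoke uniqueness of the normal trace on a finite factor. Your route is more structural and makes the role of the factor explicit, at the cost of some extra machinery (the ultraproduct construction, the GNS identification of $\iota(A)''$, and the normality of $x\mapsto\tau_{\mathcal N}(x\tilde b)$, which is correct but deserves a word more than ``by WOT-continuity''---e.g., it is a vector functional in the standard form); the paper's route is shorter and entirely self-contained within the $C^*$-framework. Both are valid, and the crucial use of compactness of $K$ (to land $\tau_\infty$ back in $\partial_eT(A)$) is identical.
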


\begin{proof}
We prove the second statement, as it evidently implies the first.
Without loss of generality, we assume that $b_n \in A_{+}$ for all $n \in \mathbb{N}$. Fix a compact subset $K \subseteq \partial_e T(A)$. Suppose there exists $a \in A$ for which it is not the case that
\begin{equation}
	\lim_{n \rightarrow \omega} \sup_{\tau \in K} |\tau(ab_n) - \tau(a)\tau(b_n)| = 0.
\end{equation} 
Then there exists $\epsilon > 0$ and a sequence $(\tau_n)_{n=1}^\infty$ in $K$ such that
\begin{equation}
\lim_{n \rightarrow \omega}|\tau_n(ab_n) - \tau_n(a)\tau_n(b_n)| \geq \epsilon.
\end{equation}
Let $\tau \in T_\omega(A)$ be the limit trace defined by the sequence $(\tau_n)_{n=1}^\infty$ and let $b \in A^\omega \cap A'$ be the element defined by the central sequence $(b_n)_{n=1}^\infty$, so that $\tau(ab) \neq \tau(a)\tau(b)$.

Let $\tau_A$ be the composition of $\tau$ with the diagonal map $A \to A^\omega$, i.e., $\tau_A = \lim_{n\to\omega} \tau_n$. Since $K$ is compact, $\tau_A \in K$. In particular, $\tau_A$ is an extremal tracial state. Next, consider the tracial functional $\sigma:A \rightarrow \mathbb{C}$ defined by $\sigma(x) = \tau(xb)$.\footnote{To check that $\sigma$ is tracial, use the fact that $b$ commutes with $A$.} We have $\sigma \leq \|b\|\tau_A$. Since $\tau_A$ is extremal, it follows that $\sigma = \alpha \tau_A$ for some positive constant $\alpha \geq 0$.\footnote{Set $\rho=\|b\|\tau_A-\sigma$, and then we may write $\|b\|\tau_A=\sigma+\rho$.
In the nontrivial case that both $\sigma,\rho$ are nonzero, $\tau_A$ can then be written as a convex combination 
\begin{equation}
 \tau_A = \frac{\|\sigma\|}{\|b\|}\cdot \frac{\sigma}{\|\sigma\|} + \frac{\|\rho\|}{\|b\|} \cdot \frac{\rho}{\|\rho\|}, 
\end{equation}
from which it follows that $\tau_A$ is a scalar multiple of $\sigma$.}

When $A$ is unital, it is immediate that $\alpha=\tau(b)$. In general, let $(e_m)_{m =1}^\infty$ be an approximate unit for $A$. By Dini's Theorem, $\rho(e_m) \nearrow 1$ uniformly for $\rho \in K$. We then compute that $|\tau_n(b_n)-\tau_n(e_mb_n)| \leq \|b\|(1-\tau_n(e_m))$. Thus, 
\begin{equation}
\tau(b) = \lim_{m\rightarrow\infty} \sigma(e_m) = \lim_{m\to\infty}\alpha\tau_A(e_m) = \alpha.
\end{equation}
Hence, $\tau(xb)=\tau(x)\tau(b)$ for all $x \in A$. Taking $x = a$ gives the required contradiction.
\end{proof}

Using the above proposition, we now show that in the Bauer case, it suffices to tracially divide the unit in an approximately central fashion to obtain uniform property $\Gamma$.

\begin{corollary}\label{cor:ReductionToAis1}
Let $A$ be a separable $C^*$-algebra with $T(A)$ a Bauer simplex. 
Then $A$ has uniform property $\Gamma$ if and only if for each $n\in \mathbb N$, there exist projections $p_1,\dots,p_n\in A^\omega \cap A'$ summing to $1_{A^\omega}$ such that
\begin{equation}\label{eqn:TraceHypothesis}
\tau(p_i)=\frac{1}{n},\quad \tau\in T_\omega(A),\ i=1,\dots,n.
\end{equation}
\end{corollary}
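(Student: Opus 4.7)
The forward implication is immediate: for unital $A$, set $a = 1_A$ in \eqref{gamma.def}; in general, apply \eqref{gamma.def} to an approximate unit $(e_m)$ of $A$ and pass to the limit using Proposition \ref{UnitalProp}. I focus on the converse, which requires upgrading tracial division of the unit to tracial division of every $a \in A$. The strategy combines the tracial factorisation of Proposition \ref{prop:BauerFactorization} with the compactness of $\partial_e T(A)$ afforded by the Bauer hypothesis.

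The first step is to lift $p_i$ to a representing sequence $(p_{i,k})_{k=1}^\infty$ of positive contractions in $A$ (via Lemma \ref{CentSurject} together with \cite[Theorem 4.9]{Lo93}) satisfying $\lim_{k \to \omega} \|[p_{i,k}, a]\|_{2, T(A)} = 0$ for every $a \in A$. The assumption \eqref{eqn:TraceHypothesis} on \emph{every} limit trace is equivalent to
\begin{equation}
\lim_{k \to \omega} \sup_{\sigma \in T(A)} \left| \sigma(p_{i,k}) - \tfrac{1}{n} \right| = 0,
\end{equation}
for otherwise a diagonal extraction would produce a sequence $(\sigma_k) \subset T(A)$ whose associated limit trace $\tau'$ satisfied $\tau'(p_i) \neq 1/n$. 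Since $K \coloneqq \partial_e T(A)$ is compact by the Bauer assumption, and $\|\cdot\|_{2,K} \leq \|\cdot\|_{2, T(A)}$, the uniform second part of Proposition \ref{prop:BauerFactorization} applies to $(p_{i,k})$ and gives, for each fixed $a \in A$,
\begin{equation}
\lim_{k \to \omega} \sup_{\sigma \in \partial_e T(A)} |\sigma(a p_{i,k}) - \sigma(a)\sigma(p_{i,k})| = 0.
\end{equation}
Combining the last two displays yields
\begin{equation}
\lim_{k \to \omega} \sup_{\sigma \in \partial_e T(A)} \left|\sigma(a p_{i,k}) - \tfrac{1}{n}\sigma(a)\right| = 0.
\end{equation}

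To conclude, I extend from extreme traces to arbitrary limit traces via Choquet integration. For any $\tau = \lim_{k \to \omega} \tau_k \in T_\omega(A)$, the Bauer hypothesis allows me to write each $\tau_k$ as the barycentre of a probability measure $\mu_k$ on $\partial_e T(A)$, so
\begin{equation}
\left|\tau_k(a p_{i,k}) - \tfrac{1}{n}\tau_k(a)\right| \;\leq\; \sup_{\sigma \in \partial_e T(A)} \left|\sigma(a p_{i,k}) - \tfrac{1}{n}\sigma(a)\right| \longrightarrow 0
\end{equation}
along $\omega$. Passing to the limit yields $\tau(a p_i) = \tau(a)/n$, which, together with $\sum_i p_i = 1_{A^\omega}$, is exactly the content of Definition \ref{defn:Gamma}. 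The delicate point — and the main obstacle — is the \emph{uniformity} in Proposition \ref{prop:BauerFactorization}: without uniform factorisation over all of $\partial_e T(A)$ simultaneously, the Choquet averaging in the last step would collapse. This uniform control is precisely what the Bauer (compactness) hypothesis supplies.
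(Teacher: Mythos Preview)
Your proof is correct and follows essentially the same route as the paper: lift $p_i$ to a representing sequence, use the hypothesis to get $\sup_{\sigma\in T(A)}|\sigma(p_{i,k})-\tfrac1n|\to 0$, apply Proposition~\ref{prop:BauerFactorization} with $K=\partial_eT(A)$ to factorise uniformly on extreme traces, and then pass to all of $T(A)$ by convexity. One small remark: your detour through Lemma~\ref{CentSurject} and Loring's lifting is unnecessary---any bounded representing sequence $(b_k)$ for $p_i$ in $\ell^\infty(A)$ automatically satisfies $\lim_{k\to\omega}\|[b_k,a]\|_{2,T(A)}=0$ for $a\in A$, since this is exactly what $p_i\in A^\omega\cap A'$ means.
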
 
\begin{proof}
	The forward implication is immediate.
For the converse, let $a \in A$ and $i \in \lbrace 1, \ldots, n \rbrace$, and we will show that $\tau(p_ia)=\frac1n\tau(a)$ for all $\tau \in T_\omega(A)$, from which it follows that $A$ has uniform property $\Gamma$.

Suppose $p_i$ is represented by $(b_k)_{k=1}^\infty \in \ell^\infty(A)$. Since $p_i \in A^\omega \cap A'$, we have $\lim_{k\rightarrow\omega} \|ab_k - b_ka\|_{2,T(A)} = 0$. Since $\partial_e T(A)$ is compact, we have 
\begin{equation}
 \lim_{k \rightarrow \omega} \sup_{\tau \in \partial_eT(A)} |\tau(ab_k) - \tau(a)\tau(b_k)| = 0
\end{equation} 
by Proposition \ref{prop:BauerFactorization}.  
		
From \eqref{eqn:TraceHypothesis}, we get that $\lim_{k\rightarrow\omega} \sup_{\tau \in T(A)}|\tau(b_k) - \frac{1}{n}| = 0$. Therefore, we have
\begin{equation}
	\lim_{k \rightarrow \omega} \sup_{\tau \in \partial_eT(A)} |\tau(ab_k) - \frac{1}{n}\tau(a)| = 0.
\end{equation}
Using convex combinations and continuity we see that for $\sigma \in T(A)$ and $k \in \mathbb N$, we have
\begin{equation}
 |\sigma(ab_k)-\frac1n\sigma(a)| \leq \sup_{\tau \in \partial_eT(A)} |\tau(ab_k)-\frac1n\tau(a)|, 
\end{equation}
and therefore,
\begin{equation}
	\lim_{k \rightarrow \omega} \sup_{\tau \in T(A)} |\tau(ab_k) - \frac{1}{n}\tau(a)| = 0.
\end{equation}  
Hence, $\tau(ap_i) = \frac{1}{n} \tau(a)$ for all $\tau \in T_\omega(A)$.
\end{proof}

Similarly, when $T(A)$ is a Bauer simplex, it suffices to consider the case $a\coloneqq 1_{A^\sim}$ in the equivalent formulations of uniform property $\Gamma$ established in Proposition \ref{prop:GammaEquivalence} (where $A^\sim$ denotes the minimal unitisation). 

The convergence in Proposition \ref{prop:BauerFactorization} need not be uniform over $\partial_e T(A)$ when $\partial_e T(A)$ is not compact, as the example below (which uses a very standard construction of a non-Bauer Choquet simplex, found for example, in \cite[Example 6.10]{Goodearl}) shows.

\begin{example}
\label{eg:NonCompactNonFactorization}
Consider the subhomogeneous $C^*$-algebra defined by
\begin{equation} A\coloneqq  \left\{(\lambda,\mu,(x_i)_{i=1}^\infty) \in \mathbb C \oplus \mathbb C \oplus \prod_{i=1}^\infty M_2
: \lim_{i\to\infty} x_i = \left(\begin{array}{cc} \lambda & 0 \\ 0 & \mu \end{array}\right)\right\}. \end{equation}
Every tracial state $\tau$ on $A$ is of the form 
\begin{equation}
	\tau(\lambda,\mu,(x_i)_{i=1}^\infty) = t_\lambda \lambda + t_\mu \mu + \sum_{i=1}^\infty t_i \tr_{M_2}(x_i) 
\end{equation}
for some $t_\lambda,t_\mu,t_i \in [0,1]$ satisfying $t_\lambda+t_\mu+\sum_{i=1}^\infty t_i = 1$. Therefore, the extreme traces on $A$ are given by
\begin{align}
	\tau_\lambda(\lambda,\mu,(x_i)_{i=1}^\infty) &\coloneqq  \lambda,\\
	\tau_\mu(\lambda,\mu,(x_i)_{i=1}^\infty) &\coloneqq  \mu, \nonumber\\
	\tau_n(\lambda,\mu,(x_i)_{i=1}^\infty) &\coloneqq  \tr_{M_2}(x_n), \quad n \in \mathbb{N}. \nonumber 
\end{align}
Note that $\partial_e T(A)$ is not compact, since
\begin{equation} \lim_{n\to\infty} \tau_n = \frac12(\tau_\lambda+\tau_\mu). \end{equation}

For each $n \in \N$, define $b_n \coloneqq  (\frac12,\frac12,(x_{n,i})_{i=1}^\infty)$ where
\begin{equation} x_{n,i} \coloneqq  \begin{cases} \frac12 1_{M_2}, \quad &i\neq n; \\ e_{11}, \quad &i= n,\end{cases} \end{equation}
and $e_{11}$ is the $(1,1)$-matrix unit.
Then for any $a = (\lambda,\mu,(y_i)_{i=1}^\infty) \in A$, we have
\begin{equation}
 \|[b_n,a]\| = \|[e_{11},y_n]\| \to \left\|\left[e_{11},\left(\begin{array}{cc} \lambda&0\\0&\mu \end{array}\right)\right]\right\| = 0, 
\end{equation}
as $n\to\infty$.
However, if we define
\begin{equation} a \coloneqq  (0,1,\left(\begin{array}{cc}0&0\\0&1\end{array}\right),\left(\begin{array}{cc}0&0\\0&1\end{array}\right),\dots)\in A, \end{equation}
then $\tau_n(ab_n)=\tr_{M_2}(0) = 0$ whereas $\tau_n(a)\tau_n(b_n) = \frac12\cdot\frac12 = \frac14$. Hence, the convergence in Proposition \ref{prop:BauerFactorization} is not uniform over $\partial_e T(A)$.
\end{example}

\begin{remark} 
Note in the example above, we even have $\tau(b_n)=\frac12$ for all $\tau \in T(A)$.
The $C^*$-algebra $A \otimes \mathcal Q$ (where $\mathcal Q$ is the universal UHF-algebra)  is an example of the same phenomenon that additionally has no finite-dimensional representations (and which certainly has property $\Gamma$, by virtue of being $\mathcal Q$-stable).\footnote{This is essentially immediate from $\mathcal Q$-stability; alternatively one can note that $\mathcal Q$-stable $C^*$-algebras are $\mathcal Z$-stable, where $\mathcal Z$ is the Jiang--Su algebra, and then appeal to \cite[Proposition 2.3]{CETWW}.} It is also natural to ask whether it is possible to observe this phenomenon with a simple $C^*$-algebra; in fact, any simple AF $C^*$-algebra with the same trace space (these exist by \cite{Bl80}) gives such an example. However, our approach to this relies on classification results for ``uniform trace norm completions'' of $\mathcal Z$-stable separable nuclear $C^*$-algebras which will be developed in forthcoming work of the authors together with Jos\'e Carri\'o{}n, James Gabe and Christopher Schafhauser, so we defer the details until then.
\end{remark}

However, Example \ref{eg:NonCompactNonFactorization} does not preclude the possibility that Corollary \ref{cor:ReductionToAis1} generalises to the case where $T(A)$ is a general Choquet simplex.

\begin{question}
Let $A$ be a separable $C^*$-algebra with no non-zero finite dimensional representations, and $T(A)$ compact and non-empty.  Suppose for some $n\in\mathbb N$, there are projections $p_1,\dots,p_n\in A^\omega\cap A'$ summing to $1_{A^\omega}$ with $\tau(p_i)=\tfrac1n$ for each $i$ and each $\tau\in T_\omega(A)$.  Must $A$ have uniform property $\Gamma$?
\end{question}

\section{Uniform property $\Gamma$ and complemented partitions of unity}
\label{sec:CPoU}

When we introduced uniform property $\Gamma$ with Winter in \cite{CETWW}, our motivation was to find an abstract general condition which gives rise to the key technical tool of \cite{CETWW} --- complemented partitions of unity (CPoU) --- which in turn is essential to passing from $\Z$-stability to finite nuclear dimension.  Our goal in this section is to prove the converse of \cite[Theorem 3.8]{CETWW} (and in fact obtain a stronger result), which will enable us to show how uniform property $\Gamma$ and other tracial divisibility conditions play a role in the Toms--Winter conjecture in the following section.  We recall the definition of CPoU below; see the discussion before and after \cite[Definition G]{CETWW} for the motivation behind this concept.

\begin{definition}[{\cite[Definition 3.1]{CETWW}}]
Let $A$ be a separable $C^*$-algebra with $T(A)$ non-empty and compact.  Then $A$ has CPoU if and only if, for every family of positive elements $a_1,\dots,a_k\in A^\omega$ and $\delta$ such that
\begin{equation}
\delta>\sup_{\tau\in T_\omega(A)}\min\{\tau(a_1),\dots,\tau(a_k)\},
\end{equation}
there exist pairwise orthogonal projections $p_1,\dots,p_k\in A^\omega \cap A'$ which sum to $1_{A^\omega}$ and have
\begin{equation}
\tau(p_ia_i)\leq \delta\tau(p_i),\quad i=1,\dots,n,\ \tau\in T_\omega(A).
\end{equation}
\end{definition}

The main technical argument in this section is to use CPoU to glue local occurrences of the McDuff property at each tracial fibre in Lemma \ref{lem:CPoUImpliesCentralDivis} to obtain the following global McDuff property.

\begin{definition}
Let $A$ be a separable $C^*$-algebra with $T(A)$ non-empty and compact. Say that $A$ is \emph{uniformly McDuff} if for each $n\in\mathbb N$, there exists a unital embedding $M_n\rightarrow A^\omega\cap A'$.
\end{definition}

\begin{remark}\label{GammaMcDuffRem}
\ 
\begin{enumerate}
    \item  Being uniformly McDuff is in fact a property of the strict closure of $A$,\footnote{We will develop this point further in a more abstract framework in forthcoming work with Jos\'e Carri\'o{}n, James Gabe and Christopher Schafhauser.}  in the spirit of McDuff's results \cite{McD70}.\footnote{Just as in the II$_1$ factor setting, it suffices to find a unital embedding $M_n\rightarrow A^\omega\cap A'$ for some $n\geq 2$ to obtain the uniform McDuff property.  Also, when $A$ is separable, the uniform tracial closure will absorb the hyperfinite II$_1$ factor tensorially (in a suitable tensor product) if and only if $A$ is uniformly McDuff.}
    \item The uniform McDuff property has repeatedly played a significant role in work on the Toms--Winter conjecture, albeit in a slightly different guise. By projectivity of order zero maps (Proposition \ref{prop:OrderZeroLifting}), and central surjectivity (Lemma \ref{CentSurject}), the uniform McDuff property is equivalent to the existence of order zero maps $\phi:M_n\rightarrow A_\omega\cap A'$ with $\tau(\phi(1))=1$ for all $\tau\in T_\omega(A)$.  Passing through local properties, this is equivalent to the same condition working with the classical sequence algebra $A_\infty\coloneqq \ell^\infty(A)/c_0(A)$, which was described in \cite[Definition 2.2]{TWW15} as ``$A$ admits uniformly tracially large c.p.c.\ order zero maps into $A_\infty\cap A'$''.  With the benefit of hindsight, we now prefer the terminology uniformly McDuff for these equivalent concepts. We discuss the role of the uniform McDuff property in the Toms--Winter conjecture in the next section.
\end{enumerate}
\end{remark} 

We now give our gluing lemma.  We could use CPoU via the local to global polynomial test of \cite[Lemma 4.1]{CETWW} to establish this, but prefer to show how CPoU is used directly.

\begin{lemma}
	\label{lem:CPoUImpliesCentralDivis}
	Let $A$ be a separable $C^*$-algebra with $T(A)$ non-empty and compact. Suppose $A$ has CPoU and that for every $\tau \in T(A)$, the GNS representation $\pi_\tau(A)''$ is McDuff, i.e., for each $n\in\mathbb N$, $M_n$ embeds unitally into $(\pi_\tau(A)'')^\omega \cap \pi_\tau(A)'$, where $(\pi_\tau(A)'')^\omega$ is the von Neumann algebra ultrapower (see \eqref{eq:vNUltrapower}) with respect to the canonical induced trace $\tau$ on $\pi_\tau(A)''$. Then $A$ is uniformly McDuff.
\end{lemma}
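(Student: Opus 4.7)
Fix $n \in \mathbb N$; the goal is a c.p.c.\ order zero map $\Phi \colon M_n \to A^\omega \cap A'$ with $\Phi(1_{M_n}) = 1_{A^\omega}$, equivalently a unital embedding. My plan is to build $\Phi$ trace-by-trace at each $\tau \in T(A)$ and then glue the fibre maps together using CPoU. Standard reindexing via Kirchberg's $\epsilon$-test (together with Proposition~\ref{prop:OrderZeroLifting} and Lemma~\ref{CentSurject}) reduces the task to the following local approximation: for every finite $\mathcal F \subset A$, every finite set $X \subset M_n$, and every $\eta > 0$, produce a c.p.c.\ order zero map $\phi \colon M_n \to A$ with $\|1_{A^\omega}-\iota(\phi(1_{M_n}))\|_{2,T_\omega(A)} < \eta$ and $\|[\phi(x),a]\|_{2,T(A)} < \eta$ for all $x \in X$ and $a \in \mathcal F$.

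To obtain the trace-wise witnesses, fix such $\mathcal F$, $X$ and $\eta$. For each $\tau \in T(A)$, the McDuff hypothesis supplies a unital embedding $M_n \hookrightarrow (\pi_\tau(A)'')^\omega \cap \pi_\tau(A)'$; combining order zero lifting with Kaplansky density (used to approximate elements of $\pi_\tau(A)''$ by elements of $\pi_\tau(A)$ in $\|\cdot\|_{2,\tau}$), I pull this embedding back to a c.p.c.\ order zero map $\phi_\tau \colon M_n \to A$ satisfying analogues of the above estimates, but only in the single trace seminorm $\|\cdot\|_{2,\tau}$. Consolidating the failure of $\phi_\tau$ into the single positive element
\begin{equation*}
b_\tau \coloneqq (1-\phi_\tau(1_{M_n}))^2 + \sum_{x \in X,\ a \in \mathcal F} [\phi_\tau(x),a]^*[\phi_\tau(x),a] \in A_+,
\end{equation*}
one has $\tau(b_\tau) \leq K\eta^2$ for a constant $K$ depending only on $|X|$ and $|\mathcal F|$. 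Weak-$^*$ continuity of $\sigma \mapsto \sigma(b_\tau)$ propagates this to a neighbourhood of $\tau$, and compactness of $T(A)$ then yields finitely many data $(\phi_j,b_j)_{j=1}^k$ such that every $\sigma \in T(A)$ satisfies $\sigma(b_j) \leq K\eta^2$ for some $j$; a pigeonhole argument along the ultrafilter upgrades this to $\sup_{\tau \in T_\omega(A)} \min_j \tau(b_j) \leq K\eta^2$.

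This is exactly what is needed to invoke CPoU: choosing $\delta$ slightly larger than $K\eta^2$, it produces pairwise orthogonal projections $p_1,\dots,p_k \in A^\omega \cap A'$ summing to $1_{A^\omega}$ with $\tau(p_j b_j) \leq \delta \tau(p_j)$ for all $\tau \in T_\omega(A)$. The gluing step is then to set $\Phi \coloneqq \sum_j p_j \phi_j \colon M_n \to A^\omega$. Because each $p_j$ lies in $A^\omega \cap A'$ it commutes with $\phi_j(x) \in A$, so orthogonality of the $p_j$ together with $\sum_j p_j = 1_{A^\omega}$ yields the telescoping identity
\begin{equation*}
(1_{A^\omega}-\Phi(1_{M_n}))^2 = \sum_j p_j(1-\phi_j(1_{M_n}))^2 \leq \sum_j p_j b_j,
\end{equation*}
and similarly $[\Phi(x),a]^*[\Phi(x),a] = \sum_j p_j [\phi_j(x),a]^*[\phi_j(x),a] \leq \sum_j p_j b_j$, so $\|1_{A^\omega}-\Phi(1_{M_n})\|_{2,T_\omega(A)}^2 \leq \delta$ and $\|[\Phi(x),a]\|_{2,T_\omega(A)}^2 \leq \delta$. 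Lifting $\Phi$ to a sequence of c.p.c.\ order zero maps $M_n \to A$ via Proposition~\ref{prop:OrderZeroLifting} furnishes the local approximation required at the start, and a final application of the $\epsilon$-test assembles these into the desired unital embedding.

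The step I expect to carry most of the technical weight is the gluing, where the telescoping identities above simultaneously exploit orthogonality of the $p_j$, their centrality (so that $p_j$ commutes with $\phi_j(x)$), and the fact that $\sum_j p_j = 1_{A^\omega}$; it is precisely the centrality clause of CPoU that makes the glued $\Phi$ a genuine c.p.c.\ order zero map rather than merely a c.p.\ contractive one. By comparison, the trace-wise construction of $\phi_\tau$ is a straightforward unpacking of the McDuff hypothesis via Kaplansky density and order zero lifting, and the closing $\epsilon$-test argument is routine.
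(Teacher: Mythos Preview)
Your argument follows the same strategy as the paper's: pull back local McDuff witnesses to c.p.c.\ order zero maps $\phi_\tau \colon M_n \to A$, package the defects into a single positive element, pass to finitely many via compactness of $T(A)$, and glue using CPoU-supplied projections, with the telescoping computation exploiting centrality and orthogonality exactly as you describe. The only point needing care is the non-unital case, where $(1-\phi_\tau(1_{M_n}))^2$ does not literally lie in $A_+$; the paper handles this by replacing $1$ with an element $e \in A_+$ from an approximate unit satisfying $\|1_{A^\omega}-\iota(e)\|_{2,T_\omega(A)} < \eta/2$ (via Proposition~\ref{UnitalProp}), after which your argument goes through verbatim.
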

\begin{proof}
	Fix $n\in\mathbb N$, and fix a finite generating set $\mathcal F$ for $M_n$.
	To establish the result, it suffices to show that given a finite subset $\mathcal G$ of $A$ and $\epsilon>0$, there exists a c.p.c.\ order zero map $\phi:M_n \to A^\omega$ such that for all $x \in \mathcal F, b \in \mathcal G$, 
	\begin{equation} \|[\phi(x),b]\|_{2,T_\omega(A)} \leq \epsilon, \quad \text{and} \quad \|1_{A^\omega}-\phi(1_{M_n})\|_{2,T_\omega(A)}\leq 2\epsilon. \end{equation}
Once this is done, Kirchberg's $\epsilon$-test (\cite[Lemma A.1]{Kir06}) gives a unital c.p.c.\ order zero map $\psi:M_n\to A^\omega\cap A'$, and hence a $^*$-homomorphism.\footnote{That unital c.p.c.\ order zero maps are $^*$-homomorphisms is well known, and for example a direct consequence of the structure theorem for order zero maps (\cite[Theorem 2.3]{WZ09}), as recorded in \cite[Proposition 1.4]{BBSTWW}.}
	
	Therefore we fix a finite subset $\mathcal G$ of $A$ and a tolerance $\epsilon>0$.   Set 
	\begin{equation} \eta \coloneqq  \frac\epsilon{\sqrt{2(1+|\mathcal F|\cdot|\mathcal G|)}}. \end{equation}
	By taking a suitable element of an approximate unit for $A$, we can find a contraction $e\in A_+$ with $\|1_{A^\omega}-e\|_{2,T_\omega(A)}<\eta/2$ by Proposition \ref{UnitalProp}.
	
For the moment, fix $\tau \in T(A)$ and set $\M_\tau\coloneqq \pi_\tau(A)''$.   By hypothesis, there exists a unital embedding $\theta:M_n\to \M_\tau^\omega \cap \M_\tau'$. The $^*$-homomorphism $\ell^\infty(A) \rightarrow \M_\tau^\omega$ given by $(a_i)_{i=1}^\infty \mapsto (\pi_\tau(a_i))_{i=1}^\infty$ is surjective by the Kaplansky Density Theorem together with the existence of norm-preserving lifts. Therefore, by order zero lifting (Proposition \ref{prop:OrderZeroLifting}), there exists a sequence $(\theta_l:M_n
 \rightarrow A)_{l=1}^\infty$ of c.p.c.\ order zero maps which induces $\theta$. Setting $\phi_\tau \coloneqq  \theta_l$ for a suitable value of $l$ we can arrange to have
\begin{equation} \|[\phi_\tau(x),b]\|_{2,\tau} <\eta, \text{ and } 1-2\tau(\phi_\tau(1_{M_n}))+\tau(\phi_\tau(1_{M_n})^2)<\eta^2/4,
\end{equation}
for all $x \in \mathcal F, b \in \mathcal G$. It follows that for all $\sigma$ in a neighbourhood of $\tau$ in $T(A)$, we continue to have
	\begin{equation} \|[\phi_\tau(x),b]\|_{2,\sigma} <\eta, \text{ and } 1-2\sigma(\phi_\tau(1_{M_n}) )+\sigma(\phi_\tau(1_{M_n})^2)<\eta^2/4 \end{equation}
for all $x \in \mathcal F, b \in \mathcal G$.
	
We now use the compactness of $T(A)$ to find c.p.c.\ order zero maps $\phi_1,\dots,\phi_k:M_n \to A$ such that for every trace $\tau \in T(A)$ there exists $i\in\{1,\dots,k\}$ such that for all $x \in \mathcal F, b \in \mathcal G$,
	\begin{equation}
	\label{eq:CentralDivis1} \|[\phi_i(x),b]\|_{2,\tau} < \eta,  \text{ and } 1-2\tau(\phi_i(1_{M_n}))+\tau(\phi_i(1_{M_n})^2) < \eta^2/4.
	\end{equation}
Notice that, working in the minimal unitisation $A^\sim$, (\ref{eq:CentralDivis1}) implies that 
\begin{align}
\nonumber\|e-\phi_i(1_{M_n})\|_{2,\tau}&\leq \|e-1_{A^\sim}\|_{2,\tau}+\|1_{A^\sim}-\phi_i(1_{M_n})\|_{2,\tau}\nonumber\\
&<\eta/2+(1-2\tau(\phi_i(1_{M_n})+\tau(\phi_i(1_{M_n})^2))^{1/2}<\eta.
\end{align}

	For $i=1,\dots,k$, we now define
	\begin{equation} \label{l:CPoU-Gamma.ai}
	a_i \coloneqq  |e-\phi_i(1_{M_n})|^2 + \sum_{x \in \mathcal F} \sum_{b \in \mathcal G} |[\phi_i(x),b]|^2 \in A_+,
	\end{equation}
	and observe that when \eqref{eq:CentralDivis1} holds, we get
	\begin{align}
	\notag
	\tau(a_i) &= \|e-\phi_i(1_{M_n
})\|_{2,\tau}^2 + \sum_{x \in \mathcal F} \sum_{b \in \mathcal G} \|[\phi_i(x),b]\|_{2,\tau}^2 \\
	&< (1+|\mathcal F|\cdot|\mathcal G|)\eta^2= \epsilon^2/2.
	\end{align}
	Thus, we conclude that
	\begin{equation} \sup_{\tau\in T(A)}\min_{i=1,\dots,k}\tau(a_i) \leq \epsilon^2/2 < \epsilon^2. \end{equation}
	Viewing $a_1,\dots,a_k$ as elements of $A^\omega$ (i.e., abusing notation and writing $a_i$ for $\iota(a_i)$), we have 
	\begin{equation} 
	\sup_{\tau \in T_\omega(A)} \min_{i=1,\dots,k} \tau(a_i) < \epsilon^2.
	\end{equation}
	Using CPoU, 
	we obtain orthogonal projections $p_1,\dots,p_k \in A^\omega \cap A'$ summing to $1_{A^\omega}$ such that
	\begin{equation} 
	\label{l:CPoU-Gamma.3}
	\tau(a_ip_i) \leq \epsilon^2\tau(p_i),\qquad \tau \in T_\omega(A). \end{equation}
	Now define $\phi:M_n \to A^\omega$ by
	\begin{equation} \phi(x) \coloneqq  \sum_{i=1}^k p_i\phi_i(x)p_i,\quad x\in M_n. \end{equation}
	Since the $p_i$ are orthogonal and commute with the images of the $\phi_i$, this is an orthogonal sum of c.p.c.\ order zero maps, so it is itself c.p.c.\ order zero.
	For $x \in \mathcal F$, $b \in \mathcal G$, first compute
	\begin{align}
	[\phi(x),b] &= \sum_{i=1}^k [p_i\phi_i(x)p_i,b]= \sum_{i=1}^k p_i[\phi_i(x),b]p_i, \label{l:CPoU-Gamma.1}
	\end{align}
	using that $p_i$ commutes with $b$.
	Next, since the $p_i$ are orthogonal, it follows that
	\begin{eqnarray}
	|[\phi(x),b]|^2 
	&\stackrel{\eqref{l:CPoU-Gamma.1}}{=} &\sum_{i=1}^k  p_i[\phi_i(x),b]^*p_i[\phi_i(x),b]p_i \nonumber \\
	&=& \sum_{i=1}^k p_i|[\phi_i(x),b]|^2p_i \nonumber \\
	&\stackrel{\eqref{l:CPoU-Gamma.ai}}{\leq}& \sum_{i=1}^k p_ia_ip_i. \label{l:CPoU-Gamma.2}
	\end{eqnarray}
	Using this, for $\tau\in T_\omega(A)$, we get,
	\begin{eqnarray}
	\|[\phi(x),b]\|^2_{2,\tau} & = & \tau(|[\phi(x),b]|^2) \nonumber \\
	&\stackrel{\eqref{l:CPoU-Gamma.2}}{\leq}& \sum_{i=1}^k \tau(p_ia_ip_i) \nonumber \\
	&=& \sum_{i=1}^k \tau(a_ip_i) \nonumber \\
	&\stackrel{\eqref{l:CPoU-Gamma.3}}\leq& \sum_{i=1}^k \epsilon^2\tau(p_i) = \epsilon^2,
	\end{eqnarray}
	and thus $\|[\phi(x),b]\|_{2,T_\omega(A)}\leq \epsilon$.
	Similarly, for $\tau \in T_\omega(A)$, since $\sum_{i=1}^k p_i=1_{A^\omega}$, we have
	\begin{eqnarray}
	\|\phi(1_{M_n})-e\|^2_{2,\tau}
	&=& \tau\Big(\big(\sum_{i=1}^k p_i\phi_i(1_{M_n})-e\big)^2\Big) \nonumber \\
	&=& \tau\Big(\sum_{i=1}^k p_i(\phi_i(1_{M_n})-e)^2\Big) \nonumber \\
	&\stackrel{\eqref{l:CPoU-Gamma.ai}}{\leq}& \sum_{i=1}^k \tau(p_ia_i) \nonumber \\
	&\leq& \epsilon^2. 
	\end{eqnarray}
Since $\|1_{A^\omega}-e\|_{2,T_\omega(A)}<\eta<\epsilon$, we have $\|\phi(1_{M_n})-1_{A^\omega}\|_{2,T_\omega(A)}<2\epsilon$, as required.
\end{proof}

In particular, nuclear $C^*$-algebras with no finite dimensional quotients satisfy the fibrewise McDuff hypothesis of the previous lemma.

\begin{lemma}
	\label{cor:CPoUImpliesCentralDivis}
	Let $A$ be a separable, nuclear $C^*$-algebra with no finite dimensional quotients and with $T(A)$ nonempty and compact. Suppose $A$ has CPoU.
	Then $M_n$ 
embeds unitally into $A^\omega \cap A'$ for all $n \in \mathbb{N}$.
\end{lemma}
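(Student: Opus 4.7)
The plan is to deduce this from the preceding Lemma \ref{lem:CPoUImpliesCentralDivis}, which already does the heavy lifting via CPoU. All that remains is to verify the fibrewise McDuff hypothesis of that lemma: for every $\tau \in T(A)$, the tracial von Neumann algebra $\M_\tau\coloneqq\pi_\tau(A)''$ admits a unital embedding $M_n \hookrightarrow \M_\tau^\omega \cap \M_\tau'$ for every $n$.

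To verify this, I would proceed as follows. First, since $A$ has no non-zero finite dimensional quotients, Proposition \ref{prop:TypeII} tells us that $\M_\tau$ is a type II$_1$ von Neumann algebra. Since $A$ is separable, $\M_\tau$ has separable predual. Since $A$ is nuclear, $\M_\tau$ is an injective (equivalently, semidiscrete) von Neumann algebra; this is the standard consequence of the Choi--Effros lifting theorem and the fact that nuclearity passes from $A$ to its weak closure in any representation. By Connes' theorem that injective finite von Neumann algebras with separable predual are hyperfinite (\cite[Theorem 6]{Co76}), $\M_\tau$ is hyperfinite. Proposition \ref{prop:UltrapowerMatrixEmbeddings} then provides, for every $n\in\mathbb N$, a unital embedding $M_n \rightarrow \M_\tau^\omega \cap \M_\tau'$, which is exactly the McDuff hypothesis needed.

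With this verified, Lemma \ref{lem:CPoUImpliesCentralDivis} applies directly and yields the conclusion that $A$ is uniformly McDuff, i.e., $M_n$ embeds unitally into $A^\omega \cap A'$ for all $n \in \mathbb N$.

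There is no real obstacle here beyond citing the right results in the right order; the whole content of the passage from CPoU plus nuclearity to the uniform McDuff property sits in Lemma \ref{lem:CPoUImpliesCentralDivis} (which uses CPoU to glue together fibrewise McDuff witnesses) and in Connes' theorem (which supplies those witnesses from nuclearity). If one wanted a more self-contained statement in the proof, the only mild subtlety to mention is why $\M_\tau$ inherits injectivity/semidiscreteness from nuclearity of $A$, but this is entirely standard.
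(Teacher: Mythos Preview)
Your proposal is correct and follows essentially the same route as the paper: verify that each $\pi_\tau(A)''$ is type II$_1$ (Proposition \ref{prop:TypeII}), hyperfinite by nuclearity and Connes' theorem, hence McDuff (Proposition \ref{prop:UltrapowerMatrixEmbeddings}), and then invoke Lemma \ref{lem:CPoUImpliesCentralDivis}. The paper's proof is slightly terser but the logical structure is identical.
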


\begin{proof}
	For $\tau \in T(A)$, $\pi_\tau(A)''$ is a II$_1$ von Neumann algebra by Proposition \ref{prop:TypeII}.
	Nuclearity of $A$ implies hyperfiniteness of $\pi_\tau(A)''$ by Connes' theorem (\cite{Co76}), so by Proposition \ref{prop:UltrapowerMatrixEmbeddings}, $\pi_\tau(A)''$ is McDuff.
	Hence by Lemma \ref{lem:CPoUImpliesCentralDivis}, there exists a unital embedding $\phi:M_n
 \rightarrow A^\omega \cap A'$.
\end{proof}

The implication (iii)$\Rightarrow$(i) of the following theorem gives the converse to \cite[Theorem 3.8]{CETWW}.

\begin{theorem}\label{GammaMcDuff}
Let $A$ be a separable, nuclear $C^*$-algebra with no finite dimensional quotients and with $T(A)$ non-empty and compact.  Then the following are equivalent:
\begin{enumerate}
\item $A$ has CPoU.
\item $A$ is uniformly McDuff.
\item $A$ has uniform property $\Gamma$.
\end{enumerate}
\end{theorem}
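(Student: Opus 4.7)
My plan is to close the cycle (i) $\Rightarrow$ (ii) $\Rightarrow$ (iii) $\Rightarrow$ (i). Two of these implications are already in hand or essentially immediate. The implication (iii) $\Rightarrow$ (i) is \cite[Theorem 3.8]{CETWW}, the original statement that uniform property $\Gamma$ implies CPoU, which I would simply invoke. The implication (i) $\Rightarrow$ (ii) is the content of Lemma \ref{cor:CPoUImpliesCentralDivis}: under the standing hypotheses each GNS von Neumann algebra $\pi_\tau(A)''$ is type II$_1$ by Proposition \ref{prop:TypeII} and, by nuclearity of $A$ together with Connes' theorem, hyperfinite, hence McDuff by Proposition \ref{prop:UltrapowerMatrixEmbeddings}; the gluing in Lemma \ref{lem:CPoUImpliesCentralDivis} then uses CPoU to assemble these fibrewise McDuff witnesses into a single unital embedding $M_n\hookrightarrow A^\omega\cap A'$.

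The remaining implication (ii) $\Rightarrow$ (iii) I would handle by exploiting the symmetry of $M_n$. Fix $n\in\mathbb N$ and a unital $^*$-homomorphism $\phi:M_n\to A^\omega\cap A'$, and let $\{e_{ij}\}$ denote the standard matrix units of $M_n$. Set $p_i\coloneqq\phi(e_{ii})$; these are pairwise orthogonal projections in $A^\omega\cap A'$ summing to $1_{A^\omega}$. For each pair $i\neq j$, the permutation unitary $u_{ij}\in M_n$ corresponding to the transposition $(ij)$ satisfies $u_{ij}e_{ii}u_{ij}^*=e_{jj}$, and its image $v_{ij}\coloneqq\phi(u_{ij})\in A^\omega\cap A'$ is a self-adjoint unitary that commutes with $\iota(A)$. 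Since every $\tau\in T_\omega(A)$ is tracial on $A^\omega$, one computes
\begin{equation}
\tau(ap_j)=\tau(av_{ij}p_iv_{ij}^*)=\tau(v_{ij}^*av_{ij}p_i)=\tau(ap_i),\quad a\in A,
\end{equation}
using that $v_{ij}$ commutes with $\iota(a)$ in the last step. Hence all $\tau(ap_i)$ agree, and summing on $i$ with $\sum_i p_i=1_{A^\omega}$ yields $\tau(ap_i)=\tfrac{1}{n}\tau(a)$, which is uniform property $\Gamma$.

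The only substantive step, then, is (i) $\Rightarrow$ (ii), carried out in Lemma \ref{cor:CPoUImpliesCentralDivis}; I expect this gluing step to be the main obstacle. The heart of that argument is Lemma \ref{lem:CPoUImpliesCentralDivis}: one uses order zero lifting (Proposition \ref{prop:OrderZeroLifting}) and fibrewise McDuffness to construct local c.p.c.\ order zero maps $\phi_\tau:M_n\to A$ that are approximately central and approximately unital in $\|\cdot\|_{2,\tau}$, compactness of $T(A)$ reduces this to finitely many maps $\phi_1,\dots,\phi_k$, and CPoU (applied to positive error elements $a_i$ measuring the failure of centrality and unitality of $\phi_i$) produces central orthogonal projections $p_1,\dots,p_k$ with which the $\phi_i$ can be glued into a globally approximately central order zero map. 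With that lemma in place, Theorem \ref{GammaMcDuff} assembles routinely.
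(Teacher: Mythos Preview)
Your proposal is correct and follows essentially the same route as the paper: the same cycle (i)$\Rightarrow$(ii)$\Rightarrow$(iii)$\Rightarrow$(i), with (i)$\Rightarrow$(ii) via Lemma~\ref{cor:CPoUImpliesCentralDivis} and (iii)$\Rightarrow$(i) via \cite[Theorem~3.8]{CETWW}. The only cosmetic difference is in (ii)$\Rightarrow$(iii), where the paper appeals in one line to uniqueness of the trace on $M_n$ while you unpack this via explicit conjugation by permutation unitaries; these are the same argument.
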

\begin{proof}
(i)$\Rightarrow $(ii) was the previous lemma, and (iii)$\Rightarrow$(i) is \cite[Theorem 3.8]{CETWW}. The remaining implication, (ii)$\Rightarrow$(iii), is essentially immediate: as in the proof of \cite[Proposition 2.3]{CETWW}, given a unital embedding $\phi:M_n\rightarrow A^\omega\cap A'$, the elements $\phi(e_{11}),\dots,\phi(e_{nn})$ are pairwise orthogonal, sum to $1_{A^\omega}$ and, by uniqueness of the trace on $M_n$, satisfy 
	\begin{equation}
		\tau(\phi(e_{ii})a) = \frac{1}{n}\tau(a), \quad a \in A,\ i=1,\dots,n.\qedhere
	\end{equation}
\end{proof}

\section{The Toms--Winter Conjecture}
\label{sec:TW}

In this section, we discuss the remaining open implication in the Toms--Winter conjecture, and relate known results in this direction to uniform property $\Gamma$.

\begin{conjecture}[Toms--Winter]\label{TW}
Let $A$ be a separable, simple, nuclear, non-elementary $C^*$-algebra. Then the following are equivalent:
\begin{enumerate}[(i)]
\item $A$ has finite nuclear dimension.
\item $A\cong A\otimes\Z$, where $\Z$ is the Jiang--Su algebra of \cite{JS99}.
\item $A$ has strict comparison (described below).
\end{enumerate}
\end{conjecture}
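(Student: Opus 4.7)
The plan is to approach this through a cycle of implications, reducing everything to a single open step. Three of the arrows are essentially in hand in the literature: $(i) \Rightarrow (ii)$ is Winter's theorem that finite nuclear dimension forces $\mathcal Z$-stability in the simple nuclear setting \cite{Wi12}; $(ii) \Rightarrow (i)$ is the main result of \cite{CETWW}, where uniform property $\Gamma$ and CPoU were the crucial intermediate notions; and $(ii) \Rightarrow (iii)$ is Rørdam's theorem that $\mathcal Z$-stable algebras have strict comparison. So the whole content lies in $(iii) \Rightarrow (ii)$, and the section should be structured around that implication.

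For the missing implication, I would first invoke Theorem \ref{thm:TomsWinterAssumingGamma}, which does the job under the additional assumption of uniform property $\Gamma$. The mechanism: by Theorem \ref{GammaMcDuff}, uniform property $\Gamma$ combined with nuclearity yields CPoU and the uniformly McDuff property; in the presence of strict comparison, the Matui--Sato--Kirchberg--Rørdam machinery then upgrades this to a unital embedding of $\mathcal Z$ into the uniform tracial central sequence algebra $A^\omega \cap A'$, from which a standard reindexing argument produces genuine $\mathcal Z$-stability $A \cong A \otimes \mathcal Z$. Thus $(iii) \Rightarrow (ii)$ is complete on the class of algebras possessing uniform property $\Gamma$, and the remaining problem reduces to removing this auxiliary hypothesis.

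The hard part is therefore to verify uniform property $\Gamma$ for every simple, separable, nuclear, non-elementary $C^*$-algebra with non-empty compact tracial state space, which is exactly Question \ref{QB}. When $T(A)$ is a Bauer simplex, Corollary \ref{cor:ReductionToAis1} recasts this as the triviality problem for the $W^*$-bundle obtained as the strict closure of $A$ --- a problem already raised by Ozawa and which would follow from a general structure theorem for $W^*$-bundles over compact metrisable spaces with hyperfinite $\mathrm{II}_1$ fibres; Ozawa's trivialisation theorem handles the case where the bundle itself has a suitable version of uniform property $\Gamma$. Outside the Bauer case, the pointwise factorisation behind Proposition \ref{prop:BauerFactorization} fails (as Example \ref{eg:NonCompactNonFactorization} shows), so even the reformulation becomes delicate, and producing approximately central projections that are uniformly trace-halving across the entire simplex appears to require genuinely new tools. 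I would therefore present Section \ref{sec:TW} not as a proof of the conjecture but as a careful inventory of the known reductions, with the partial results on diagonal $AH$-algebras (Proposition \ref{prop:DiagonalAHGamma}) and on crossed products with the small boundary property \cite{KS18} as evidence that uniform property $\Gamma$ is the right divisibility condition to pursue.
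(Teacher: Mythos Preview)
Your proposal is correct and matches the paper's treatment: the statement is a conjecture, not a theorem, and the paper does not prove it but rather records that (i)$\Leftrightarrow$(ii) and (ii)$\Rightarrow$(iii) are known, proves (iii)$\Rightarrow$(ii) under the additional hypothesis of uniform property $\Gamma$ (Theorem \ref{thm:ZstableIffGammaComparison}), and isolates the remaining obstruction as Question \ref{QB}. Your inventory of the implications, the mechanism via Theorem \ref{GammaMcDuff} and Matui--Sato, and the reduction in the Bauer case to Ozawa's $W^*$-bundle trivialisation problem are all in line with Section \ref{sec:TW}.
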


By now conditions (i) and (ii) are known to be equivalent (by \cite{CETWW,CE,Wi12,Ti14}, building on \cite{MS14}). Moreover, in the presence of the Universal Coefficient Theorem, these two conditions characterise those separable simple nuclear $C^*$-algebras accessible to classification (\cite{Kir:Unpublished,Phi00,TWW15,EGLN15,GLN,CETWW}).  

The third condition, strict comparison, is a condition on positive elements in the stabilisation of $A$, which can be described in terms of the \emph{Cuntz semigroup}. Recall that the Cuntz semigroup, $\Cu(A)$, of a $C^*$-algebra $A$ is built from equivalence classes of positive elements of the stabilisation $A\otimes\mathcal K$ of $A$ as follows: for $a,b\in (A\otimes\mathcal K)_+$, write $a\precsim b$ if and only if there is a sequence $(x_n)_{n=1}^\infty$ in $A\otimes\mathcal K$ with $x_nbx_n^*\rightarrow a$, and define an equivalence relation $a\sim b$ if and only if $a\precsim b$ and $b\precsim a$.  Then $\Cu(A)\coloneqq (A\otimes\mathcal K)_+/\sim$.  This is an ordered abelian semigroup.\footnote{Addition of $x,y\in \Cu(A)$ is defined by $x+y=[a+b]$, where $a,b\in(A\otimes \mathcal K)_+$ are orthogonal representatives of $x$ and $y$ respectively.}  See \cite{APT11} for a survey.
A \emph{functional} on $\Cu(A)$ is an ordered semigroup homomorphism $\phi:\Cu(A)\rightarrow [0,\infty]$ which preserves increasing sequential suprema (when $A$ is unital, a functional is called \emph{normalised} if it maps the class of the unit to $1$), and the collection of functionals is denoted $F(\Cu(A))$.\footnote{In the unital case, functionals on $\Cu(A)$ come from 2-quasitraces as defined in \cite[Definition II.1.1]{BH82}, and by \cite{Ha14} these are the same as traces for exact $C^*$-algebras. Any $\tau\in T(A)$ (or more generally, any 2-quasitrace) extends uniquely to a densely defined lower semicontinuous trace (or 2-quasitrace) on $A\otimes \mathcal K$ (also denoted by $\tau$), and for $a\in (A\otimes\mathcal K)_+$, $d_\tau(a)\coloneqq \lim_{n\rightarrow\infty}\tau(a^{1/n})$ gives a well defined state on $\Cu(A)$; moreover every state arises in this fashion; see \cite{ERS}.}
A simple $C^*$-algebra has \emph{strict comparison} when one can deduce $x\leq y$ in $\Cu(A)$ from knowing that $\phi(x)\leq\phi(y)$ for all $\phi\in F(\Cu(A))$, with strict inequality whenever $\phi(y) \in (0,\infty)$. 
It is essentially a result of R\o{}rdam from \cite{Ro04} that simple $\mathcal Z$-stable $C^*$-algebras have strict comparison, and hence the implication (ii)$\Rightarrow$(iii) of Conjecture \ref{TW} was known to hold well before the conjecture was formulated.\footnote{There is a small detail to watch out for when looking at earlier papers involving the Cuntz semigroup, such as \cite{Ro04} and \cite{Wi12}. These often use the `incomplete' $W(A)\coloneqq \bigcup_{n=1}^\infty M_n(A)_+/\sim$ in place of $\Cu(A)=W(A\otimes\mathcal K)$ which came to the fore in \cite{CEI}.  } Consequently, the remaining open part of the Toms--Winter conjecture is the implication (iii)$\Rightarrow$(ii).  

In addition to the precise formulation of Conjecture \ref{TW}, research has also focused on trying to obtain \emph{any} Cuntz semigroup condition which characterises $\mathcal Z$-stability for simple, separable, unital and nuclear $C^*$-algebras.  Indeed, it has been suggested by Winter, at least as far back as the CBMS lecture series in 2011, that another potential condition is $\Cu(A) \cong \Cu(A\otimes \mathcal Z)$, which is certainly the strongest reasonable such candidate (see for instance \cite[Paragraph 5.4]{Wi18}).  Experts have known for some time that $\Cu(A) \cong \Cu(A\otimes \mathcal Z)$ represents the combination of strict comparison and an appropriate divisibility condition on the Cuntz semigroup. It is too much to ask for exact divisibility of $\Cu(A)$, which would mean that given any $x\in \Cu(A)$, and $k\in\mathbb N$, one can divide $x$ by $k$, i.e. find $y\in\Cu(A)$ with $ky=x$. For example, the unit of the Jiang--Su algebra is not divisible in the Cuntz semigroup by any $n\geq 2$. Thus, instead one asks for `almost-divisibility', the ability to divide elements into between $k$ and $k+1$ pieces.  There are a number of slightly differently formulations of this idea in the literature,\footnote{The concept of almost-divisibility had been around for some time before the present terminology stuck, showing up implicitly in \cite{Ro04}, anonymously in \cite{RW} and pseudonymously in \cite{APT11}. In \cite[Definition 3.5]{Wi12}, Winter uses a slightly different version using the incomplete Cuntz semigroup $W(A)$ in place of $\Cu(A)$, and replacing (\ref{e5.1}) by $k[b]\leq [a]\leq (k+1)[b]$. This condition is formally stronger than almost-divisibility, though it is equivalent to it in the presence of strict comparison. Without strict comparison it is open whether these two notions are the same. Note too that in \cite[Remark 3.13, Definition 3.7]{RR13}, Robert and R\o{}rdam refer to a $\sigma$-unital $C^*$-algebra $A$ as almost divisible if the class given by a strictly positive element of $A$ is almost divisible.  } but based on developments in the setting of abstract Cuntz semigroups (see \cite{APT18}, for example) the prevailing definition today is that $A$ is \emph{almost divisible} if, for all $[a] \in \Cu(A)$, $k \in \N$ and $\epsilon > 0$, there exists $[b] \in \Cu(A)$ such that
\begin{equation}\label{e5.1}
	k[b] \leq [a] \quad \mbox{and} \quad [(a-\epsilon)_+] \leq (k+1)[b]. 
\end{equation}

In the presence of strict comparison, almost-divisibility, some of its earlier reformulations, and various other properties with similar flavours all become equivalent. We collect some of these facts below in the simple unital case, for completeness, and to set the scene for the discussion which follows. None of these are our results; most are a combination of observations known to experts which can be a little tricky to extract from the literature (a number of the implications, in the language and greater generality of Cu-semigroups, are contained in \cite[Proposition 2.11]{Th19}); the harder implication (iii)$\Rightarrow$(i) is due to Leonel Robert (communicated to us by Hannes Thiel).

\begin{proposition} \label{prop:Divisibility}
	Let $A$ be a simple, separable, unital, non-elementary, stably finite $C^*$-algebra with strict comparison and such that all $2$-quasitraces are traces.
The following are equivalent:
\begin{enumerate}[(i)]
	\item[(i)] $\Cu(A) \cong \Cu(A\otimes \mathcal Z)$.
	\item[(ii)] $A$ is almost divisible.
	\item[(ii$'$)] $A$ is almost divisible in the sense of \cite[Definition 3.5(a)]{Wi12}.
	\item[(iii)] For some $m \in \N_0$, $A$ is $m$-almost divisible, i.e., for all $[a] \in \Cu(A)$, $k \in \N$ and $\epsilon > 0$, there exists $[b] \in \Cu(A)$ such that
\begin{equation}
	k[b] \leq [a] \quad \mbox{and} \quad [(a-\epsilon)_+] \leq (k+1)(m+1)[b]. 
\end{equation}
\item[(iii$'$)] For some $m\in\N_0$, $A$ is $m$-almost divisible in the sense of \cite[Definition 3.5(a)]{Wi12}.
	\item[(iv)] $A$ is tracially almost divisible in the sense of \cite[Definition 3.5(ii)]{Wi12}, i.e., for every positive contraction $a\in (A\otimes M_n)_+$, $n,k\in\mathbb N$ and $\epsilon>0$, there exists a c.p.c.\ order zero map $\psi:M_k\rightarrow \overline{a(A\otimes M_n)a}$ such that 
\begin{equation}
\tau(\psi(1_{M_k}))\geq \tau(a)-\epsilon,\quad \tau\in T(A).
\end{equation} 
	\item[(v)] For some $m \in \N_0$, $A$ is tracially $m$-almost divisible in the sense of \cite[Definition 3.5(ii)]{Wi12}, i.e., for every positive contraction $a\in (A\otimes M_n)_+$, $n,k\in\mathbb N$ and $\epsilon>0$, there exists a c.p.c.\ order zero map $\psi:M_k\rightarrow \overline{a(A\otimes M_n)a}$ such that 
\begin{equation}
\tau(\psi(1_{M_k}))\geq\frac{1}{m+1}\tau(a)-\epsilon,\quad \tau\in T(A).
\end{equation}
\item[(vi)] For every function $f:F(\Cu(A)) \rightarrow [0,1]$ that is additive, order-preserving, homogeneous (with respect to positive
scalars), and lower semicontinuous, there exists $[b] \in \Cu(A)$ such that $f(\phi) = \phi([b])$ for all $\phi \in F(\Cu(A))$.   \label{item:AllRanks} 
\end{enumerate}
\end{proposition}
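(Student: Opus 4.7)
The plan is to establish all equivalences via the hub (i) $\Leftrightarrow$ (vi) $\Leftrightarrow$ (ii), together with the translations (ii) $\Leftrightarrow$ (iv) and (iii) $\Leftrightarrow$ (v) between Cuntz-semigroup conditions and their tracial/c.p.c.\ order-zero counterparts, the absorption of $m$-factors (iii) $\Rightarrow$ (ii) and (v) $\Rightarrow$ (iv) under strict comparison, and the matching of Winter's sandwich formulations (ii'), (iii') with (ii), (iii). For the Cuntz/tracial translation, a c.p.c.\ order zero map $\psi: M_k \to \overline{a(A \otimes M_n)a}$ decomposes by the structure theorem (\cite{WZ09}) through a $^*$-homomorphism from the cone $C_0((0,1]) \otimes M_k$; in particular, $[\psi(1_{M_k})] = k[\psi(e_{11})]$ in $\Cu(A)$. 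Thus a tracial bound $\tau(\psi(1_{M_k})) \geq \tfrac{1}{m+1}\tau(a) - \epsilon$ produces, via strict comparison, a class $[b] \coloneqq [\psi(e_{11})]$ in $\Cu(A)$ with $k[b] \leq [a]$ and $[(a-\epsilon')_+] \leq (k+1)(m+1)[b]$, giving (v) $\Rightarrow$ (iii) (and (iv) $\Rightarrow$ (ii) with $m=0$). Conversely, functional calculus applied to a representative of $[b]$ (as in \cite{Wi12}) produces the required order zero map, yielding (ii) $\Rightarrow$ (iv) and (iii) $\Rightarrow$ (v). The inclusions (ii) $\Rightarrow$ (iii), (iv) $\Rightarrow$ (v), (ii') $\Rightarrow$ (ii), (iii') $\Rightarrow$ (iii) are immediate, and the reverse matchings with Winter's sandwich form follow from strict comparison, which lets one pass from $[(a-\epsilon)_+] \leq (k+1)[b]$ back to $[a] \leq (k+1)[b]$ after shrinking $\epsilon$.

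The absorption (iii) $\Rightarrow$ (ii) is the central difficulty and is where Robert's argument enters, as indicated in the statement. I would proceed by showing (iii) $\Rightarrow$ (vi) first: given a lower semicontinuous affine rank function $f$ on $F(\Cu(A))$, iterate (iii) to produce a sequence of dyadic Cuntz classes whose rank functions approximate $f$ from below, then take the supremum in $\Cu(A)$. Under strict comparison, this supremum must actually equal (not merely dominate) $f$. Then (vi) $\Rightarrow$ (ii) follows by applying (vi) to the function $\phi \mapsto \tfrac{1}{k+1}\phi([a])$ (or a suitable normalisation).

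For the identification (i) $\Leftrightarrow$ (vi), I would appeal to the known structure of $\Cu(A\otimes \Z)$ for stably finite $\Z$-stable algebras: under our hypotheses, one has $\Cu(A \otimes \Z) \cong V(A) \sqcup \mathrm{LAff}_b(T(A))_{++}$, where $V(A)$ is the monoid of Murray--von Neumann classes of projections in $A \otimes \mathcal{K}$ (stable finiteness yields $V(A) \cong V(A \otimes \Z)$ by results of Jiang--Su and R\o rdam). The canonical map $\Cu(A) \to \Cu(A \otimes \Z)$ is injective by strict comparison; surjectivity onto the soft part is exactly condition (vi), giving (i) $\Leftrightarrow$ (vi). Combined with the first two paragraphs, this closes all equivalences.

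The main obstacle is the realisation of rank functions in the second paragraph (the content of (iii) $\Rightarrow$ (vi)): dyadic refinements have to be arranged so that their supremum in $\Cu(A)$ matches the target $f$ exactly rather than merely dominating it. This is where the combination of sequential completeness of $\Cu(A)$ and strict comparison is indispensable, and it is the technical ingredient that, as the paper notes, was supplied by Leonel Robert.
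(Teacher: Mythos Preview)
Your overall scaffolding is reasonable and matches the paper's in spirit for most of the easy implications, but the crucial step (iii)~$\Rightarrow$~(ii) is where your proposal and the paper diverge substantially, and your sketch there does not constitute an argument.

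You propose to obtain (iii)~$\Rightarrow$~(vi) by ``iterating (iii) to produce a sequence of dyadic Cuntz classes whose rank functions approximate $f$ from below, then take the supremum in $\Cu(A)$''. The problem is that $m$-almost divisibility gives only the loose bracket
\[
\frac{d_\tau((a-\epsilon)_+)}{(k+1)(m+1)} \leq d_\tau(b) \leq \frac{d_\tau(a)}{k},
\]
so the rank of each new class is pinned down only up to a multiplicative factor of roughly $(m+1)$. Iteration does not shrink this gap, and there is no mechanism in your sketch for producing a sequence whose ranks converge \emph{precisely} to a prescribed $f$. You correctly flag this as ``the main obstacle'' and attribute it to Robert, but Robert's argument does not go through (vi) at all. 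The paper's proof of (iii)~$\Rightarrow$~(ii) instead invokes \cite[Corollary~7.6]{RT17}: since $C^*(a)$ is commutative with nuclear dimension at most $1$, $m$-almost divisibility plus strict comparison yields a unital $^*$-homomorphism $\mathcal Z \to (A_\omega \cap C^*(a)')/\{a\}^\perp$, and one then transports R\o{}rdam's element $h\in\mathcal Z$ with $k[h]\leq [1_{\mathcal Z}]\leq (k+1)[h]$ to divide $[a]$ directly. This is a genuinely different (and non-obvious) route.

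A smaller point: your claim that (ii)~$\Rightarrow$~(ii$'$) follows from strict comparison by ``passing from $[(a-\epsilon)_+]\leq (k+1)[b]$ back to $[a]\leq (k+1)[b]$ after shrinking $\epsilon$'' does not work as stated, since $[b]$ depends on $\epsilon$. The paper does not prove this implication directly either; it obtains (ii$'$) via the circuit (ii)~$\Rightarrow$~(i)~$\Rightarrow$~(ii$'$), the last step citing \cite[Proposition~3.7]{Wi12}. Likewise, the paper does not prove (ii)~$\Rightarrow$~(iv) or (iii)~$\Rightarrow$~(v) directly from functional calculus as you suggest; it routes (ii$'$)~$\Rightarrow$~(iv) through \cite[Proposition~3.8]{Wi12} and closes the loop via (v)~$\Rightarrow$~(iii) (which your sketch handles correctly). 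Your (vi)~$\Rightarrow$~(ii) and (i)~$\Leftrightarrow$~(ii)/(vi) via the structure of $\Cu(A\otimes\mathcal Z)$ are in line with the paper's treatment.
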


\begin{remark}
Condition (vi) above is one formulation of the `rank problem'. A positive operator $a\in (A\otimes\mathcal K)_+$, gives rise to a functional $\hat{a}:F(\Cu(A))\rightarrow[0,\infty]$ which measures the rank of $a$ with respect to each (lower semicontinuous quasi-)trace on $A$.  The rank problem asks whether all possible such functionals are realised by positive operators in $A\otimes\mathcal K$. It was first systematicaly investigated in \cite{DT}, and given heavy impetus by Nate Brown who promoted the viewpoint that the rank problem stands analogous to the fact that every possible trace value is obtained by a projection in a II$_1$ factor.  Dramatic progress has recently been obtained by Thiel in \cite{Th19}, who proved (independently of strict comparison) that (vi) holds for all simple, separable, unital, $C^*$-algebras with stable rank one.
\end{remark}

\begin{proof}[Proof of Proposition \ref{prop:Divisibility}.] 
(ii)$\Rightarrow$(iii), (ii$'$)$\Rightarrow$(iii$'$) and (iv)$\Rightarrow$(v) are all trivial consequences of the definitions. All hold without assuming strict comparison. (ii$'$)$\Rightarrow$(iv) is \cite[Proposition 3.8]{Wi12}, and also does not require strict comparison. (ii$'$)$\Rightarrow$(ii) and (iii$'$)$\Rightarrow$(iii) are almost immediate.\footnote{For (ii$'$)$\Rightarrow$(ii), given a positive element $a\in A\otimes \mathcal K$, $k\in\mathbb N$ and $\epsilon>0$, note that there exists $[b] \in W(A)$ between $[(a-\epsilon)_+]$ and $[a]$ by \cite[Lemma 3.2.7]{APT18}, so $[b]$ can be divided by (ii$'$), giving $[c]$ with $k[c]\leq [b] \leq[a]$ and $[(a-\epsilon)_+]\leq[b] \leq (k+1)[c]$ as required. The implication (iii$'$)$\Rightarrow$(iii) works in the same way.} (i)$\Rightarrow$(ii$'$) essentially goes back to \cite{Ro04}; it is recorded in \cite[Proposition 3.7]{Wi12}.

The implication (ii)$\Rightarrow$(i) is by now a folklore fact, and (the corresponding version using (ii$'$)) was certainly known to Winter at the time of writing of \cite{Wi12}.  One way to obtain it is as follows. If $A$ is almost divisible and has strict comparison then by \cite[Theorems 7.3.11 and 7.6.7]{APT18}, $\Cu(A) \cong (\Cu(A)_c\setminus\{0\}) \amalg L(F(\Cu(A)))$ (using the notation of \cite{APT18}, that is, $\Cu(A)_c$ is the set of compact elements of $\Cu(A)$, while $L(F(\Cu(A)))$ is the set of lower semicontinuous morphisms from $F(\Cu(A))$ to $[0,\infty]$ described in (vi)).
The set of $2$-quasitraces on $A$ identifies with $F(\Cu(A))$, and by \cite[Theorem 3.5]{BC09}, $\Cu(A)_c\cong V(A)$ --- the Murray von Neumann semigroup of $A$ --- so this description of $\Cu(A)$ agrees with the description of $\Cu(A\otimes \mathcal Z)$ given in \cite[Theorem 7.3.1]{APT18}.  This description also demonstrates (ii)$\Rightarrow$(vi).

(iii) $\Rightarrow$ (ii): Here is an argument (due to Leonel Robert and communicated to us by Hannes Thiel) to get almost-divisibility assuming $m$-almost-divisibility and strict comparison. Fix a contraction $a \in (A\otimes \mathcal K)_+$ to be `almost divided' by $k\in\mathbb N$, and let $\epsilon>0$. Since $C^*(a)\cong C(\sigma(a))$ has nuclear dimension at most $1$, and $A$ has $m$-almost-divisibility and strict comparison, there is a unital $^*$-homomorphism $\phi:\mathcal Z \to (A_\omega \cap C^*(a)')/\{a\}^\perp$ by \cite[Corollary 7.6]{RT17}.
Lifting $\phi$ to a sequence $(\phi_n)_{n=1}^\infty$ of $^*$-linear maps $\mathcal Z \to A$, and taking a positive contraction $h \in \mathcal Z$ such that $k[h] \leq [1_{\mathcal Z}] \leq (k+1)[h]$ (as given in \cite[Lemma 4.2]{Ro04}), then for $\delta>0$ sufficiently small and for $\omega$-almost all $n$ it will follow that $b_n\coloneqq (\phi_n(h)^{1/2}a\phi_n(h)^{1/2}-\delta)_+\in A_+$ satisfies $k[b_n] \leq [a]$ and $[(a-\epsilon)_+] \leq (k+1)[b_n]$.\footnote{To see this, set $b\coloneqq (b_n)_{n=1}^\infty \in A_\omega$.
Note that for $z \in \mathcal Z$ and $c\in C^*(a)$, $\phi(z)c$ is a well-defined element of $A_\omega$, and using this notation, $b=(\phi(h)^{1/2}a\phi(h)^{1/2}-\delta)_+ = (\phi(h)a-\delta)_+$. Throughout the rest of this footnote, given elements $x,y$ in a $C^*$-algebras, write $x\approx_\eta y$ to mean $\|x-y\|<\eta$.

For $[(a-\epsilon)_+]\leq(k+1)[b_n]$, it is standard that there exists $s \in M_{(k+1)\times 1}(\mathcal Z)$ such that $s^*(1_{k+1} \otimes h)s=1_{\mathcal Z}$ (this follows using compactness of $1_{\mathcal Z}$: for $1>\lambda>0$, the definition of Cuntz subequivalence gives $s_1\in M_{(k+1)\times 1}(\mathcal Z)$ such that $s_1^*(1_{k+1} \otimes h)s_1\approx_{\lambda/2} 1_{\mathcal Z}$, then using \cite[Lemma 2.2]{KR02}, for example, there exists $s_2\in \mathcal Z$ with $s_2^*s_1^*(1_{k+1} \otimes h)s_1s_2=(1_{\mathcal Z}-\lambda)_+=(1-\lambda)1_{\mathcal Z}$; now take $s=(1-\lambda)^{-1/2}s_1s_2$). Then, taking $\eta>0$ and $\delta>0$ sufficiently small that $2\eta+\delta \|s\|^2<\epsilon$, and a positive contraction $e \in C^*(a)$ such that $ea\approx_\eta a$, we have
\[ (\phi(s)e)^*(1_{k+1}\otimes \phi(h)a)(\phi(s)e) = \phi(s)^*(1_{k+1}\otimes \phi(h))\phi(s)eae = \phi(1_\mathcal Z)eae \approx_{2\eta} a. \]
It follows that $(\phi(s)e)^*(1_{k+1}\otimes b)(\phi(s)e) \approx_{\epsilon} a$. Thus if $(t_n)_{n=1}^\infty$ is a lift for $\phi(s)e$, then  for $\omega$-almost all $n$, $t_n^*(1_{k+1}\otimes b_n)t_n \approx_{\epsilon} a$, and so by \cite[Proposition 2.2]{Ro92}, $[(a-\epsilon)_+]\leq (k+1)[b_n]$. 

The argument is similar for $k[b_n]\leq[a]$. First fix $\eta_1>0$ and $s \in M_{1\times k}(\mathcal Z)$ such that $s^*s \approx_{\eta_1} 1_k\otimes h$. Thus, if $e\in C^*(a)$ is a positive contraction such that $ea\approx_{\eta_2} a$, where $\eta_2$ and $\eta_1$ are chosen small enough that $2\|s\|^2\eta_2+\eta_1<\delta$, where $\delta$ is as fixed in the previous paragraph, we have
\begin{align*}
(e\phi(s))^*a(e\phi(s)) &\approx_{2\|s\|^2\eta_2} \phi(s^*)a\phi(s) \\
&= \phi(s^*s)(1_k \otimes a) \\
&\approx_{\eta_1} 1_k\otimes \phi(h)a  = 1_k\otimes \phi(h)^{1/2}a\phi(h)^{1/2}.
\end{align*}
Thus if $(t_n)_{n=1}^\infty$ is a lift for $e\phi(s)$, then for $\omega$-almost all $n$, $t_n^*at_n \approx_\delta 1_k\otimes \phi_n(h)^{1/2}a\phi_n(h)^{1/2}$, and so by \cite[Proposition 2.2]{Ro92}, $k[b_n] \leq [t_n^*at_n] \leq [a]$.}

(v) $\Rightarrow$ (iii): Given $n,k\in\mathbb N$, a positive contraction $a\in A\otimes M_n$, and $0<\epsilon<1$ so that $(a-\epsilon)_+\neq 0$ (otherwise we can verify (iii) with $b=0$), let $f:[0,1]\rightarrow [0,1]$ be given by
$$
f(t)\coloneqq\begin{cases}t/\epsilon,&0\leq t\leq \epsilon,\\1,&t>\epsilon.\end{cases}
$$
Then $f(a)(a-\epsilon)_+=(a-\epsilon)_+$, so that $\tau(f(a))\geq d_\tau((a-\epsilon)_+)$ for all $\tau\in T(A)$.  Set $\delta:=\tfrac{1}{k}\min_{\tau\in T(A)}\tau(f(a))>0$, and use (v) to obtain a c.p.c.\ order zero map $\psi:M_k\rightarrow \overline{f(a)(A\otimes M_n)f(a)}$  with $\tau(\psi(1_k))\geq \tfrac{1}{m+1}\tau(f(a))-\delta$ for $\tau\in T(A)$. Then for $\tau\in T(A)$,
\begin{align}
(k+1)(m+1)d_\tau(\psi(e_{11}))&=\frac{(k+1)(m+1)}{k}d_\tau(\psi(1_k))\nonumber\\
&\geq\frac{k+1}{k}(\tau(f(a))-\delta)\nonumber\\
&\geq\tau(f(a))\geq d_\tau((a-\epsilon)_+).
\end{align}
Strict comparison gives $[(a-\epsilon)_+]\leq (k+1)(m+1)[\psi(e_{11})]$, while $k[\psi(e_{11})]=[\psi(1_k)]\leq [f(a)]\leq[a]$, by construction of $\psi$.

(vi) $\Rightarrow$ (ii):
Let $[a] \in \Cu(A)$, $k \in \N$ and $\epsilon > 0$. Consider the function $f:F(\Cu(A)) \rightarrow [0,1]$ defined by $f(\phi) = \tfrac{2}{2k+1}\phi([a])$ for all $\phi \in F(\Cu(A)) $. Then $f$ is additive, order-preserving, homogeneous and lower semicontinuous. Therefore, there exists $[b] \in \Cu(A)$ such that $f(\phi) = \phi([b])$ for all $\phi \in F(\Cu(A))$. 

By construction, we have
\begin{equation}
	\phi(k[b]) < \phi([a]) < \phi((k+1)[b]) < \infty
\end{equation}
for all $\phi \in F(\Cu(A))$ with $\phi([a]) < \infty$. By strict comparison, we have 
\begin{equation}
	k[b] \leq [a] \quad \mbox{and} \quad  [a] \leq (k+1)[b]. 
\end{equation} 
Hence, $A$ is almost divisible, noting that $[(a-\epsilon)_+] \leq [a]$. \end{proof}

In the terminology of \cite{Wi12}, the algebras covered by the previous proposition are called \emph{pure} $C^*$-algebras. 
In the direction of proving $\mathcal Z$-stability from such Cuntz semigroup conditions, a groundbreaking result was achieved by Winter in \cite{Wi12} for unital $C^*$-subalgebras of locally finite nuclear dimension;\footnote{$A$ has locally finite nuclear dimension if for all finite subsets $\mathcal F\subset A$ and $\epsilon>0$, there exists a $C^*$-algebra $B\subset A$ of finite nuclear dimension, which approximately contains $\mathcal F$ up to $\epsilon$.  This provides an abstract condition encompassing various classes of inductive limits, including for example all approximately subhomogeneous $C^*$-algebras.} we state the generalisation of this result to non-unital algebras below.

\begin{theorem}[{\cite[Corollary 7.2]{Wi12} and \cite[Corollary 8.8]{Ti14}}]\label{PureLFD}
Let $A$ be a separable, simple $C^*$-algebra with locally finite nuclear dimension.  Suppose $\Cu(A)\cong \Cu(A\otimes \Z)$.  Then $A\cong A\otimes \Z$.
\end{theorem}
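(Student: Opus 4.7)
My plan would be to first translate the Cuntz-semigroup hypothesis into concrete structural properties of $A$, and then use those to construct the order-zero maps witnessing $\mathcal Z$-stability.

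Step one: use Proposition \ref{prop:Divisibility} to convert $\Cu(A)\cong\Cu(A\otimes\Z)$ into the pair of conditions (a) strict comparison, and (b) almost divisibility (in fact any of the equivalent reformulations (ii)--(vi)). Note that the hypotheses of that proposition --- simplicity, separability, unitality, non-elementarity and stable finiteness --- are not all directly available, so one must argue that the cases of interest fall within its scope after passing to a suitable corner. For the purely infinite case there is nothing to prove: a simple, separable, purely infinite $C^*$-algebra is Kirchberg, hence $\mathcal O_\infty$-stable and in particular $\mathcal Z$-stable, so we may assume $A$ is stably finite. In the non-unital case, one works with the minimal unitisation (or a full hereditary subalgebra with compact tracial state space) to set up the divisibility properties.

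Step two: apply a characterisation of $\mathcal Z$-stability in terms of c.p.c.\ order zero maps from prime dimension-drop algebras (or equivalently from matrix algebras) into $A_\omega\cap A'$ with controllable complement, in the spirit of R\o{}rdam--Winter. Concretely, the goal is: for each $n\in\mathbb N$ and each positive contraction $a \in A$, produce a c.p.c.\ order zero map $\phi:M_n\to A_\omega\cap A'$ with $(a - \phi(1_{M_n})a)_+ \precsim $ trace-arbitrarily-small element. Once achieved for all $n$, a standard reindexing argument upgrades this to a unital embedding of $\mathcal Z$ into a relevant central sequence algebra, giving $A\cong A\otimes\mathcal Z$.

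Step three: use locally finite nuclear dimension to construct $\phi$. Given a finite set $\mathcal F\subset A$ and tolerance $\epsilon>0$, choose a subalgebra $B\subset A$ of finite nuclear dimension that contains $\mathcal F$ up to $\epsilon$. A finite nuclear-dimension c.p.\ approximation $B\xrightarrow{\psi} F \xrightarrow{\varphi} B$ (with $F$ finite-dimensional and $\varphi$ a sum of $d+1$ order zero maps from matrix summands) provides finitely many order zero building blocks that nearly act as the identity on $\mathcal F$. Using almost divisibility on each matrix block of $F$, one chops each order-zero piece into $n$ approximately unitarily equivalent pieces, reassembling them into an order zero map $M_n\to A_\omega$ whose image commutes approximately with $\mathcal F$ and whose unit image is large in trace. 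Strict comparison then upgrades the tracial largeness of $\phi(1_{M_n})$ to Cuntz domination of any prescribed $a$ by $\phi(1_{M_n})a$ plus arbitrarily trace-small remainder.

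The main obstacle is the last upgrade: converting a tracial estimate on $1_A - \phi(1_{M_n})$ into the actual Cuntz subequivalence needed for $\mathcal Z$-stability, uniformly as $\mathcal F$ and $\epsilon$ vary. This is where strict comparison of positive elements (via functionals on $\Cu(A)$) is used decisively, and it is delicate because the order-zero maps coming from different approximating subalgebras $B$ need not match up on overlaps. The way around this is an $\omega$-reindexing argument: first build $\phi$ only at the level of tolerances $(\mathcal F,\epsilon)$, then thread the construction through the ultrafilter so that all tracial errors vanish in $A_\omega$ and strict comparison can be applied once, simultaneously. For the non-unital case (\cite[Corollary 8.8]{Ti14}), a further reduction step is needed to pass from a hereditary subalgebra with compact trace simplex back to $A$ via Hjelmborg--R\o{}rdam stability-of-$\sigma$-unitality arguments, which is technical but orthogonal to the core divisibility-to-$\Z$-stability mechanism described above.
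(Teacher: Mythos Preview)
This theorem is not proved in the paper; it is cited from \cite{Wi12,Ti14}, and the paper only records a three-step outline of Winter's argument.  Your proposal broadly matches that outline: extract strict comparison and almost divisibility from $\Cu(A)\cong\Cu(A\otimes\Z)$, then use locally finite nuclear dimension to transfer these to properties of the central sequence algebra sufficient to force $\Z$-stability.  So at the level of strategy you are aligned with the paper.

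Two points of contrast are worth noting.  First, the paper's outline explicitly separates the passage to the central sequence algebra into two independent pieces: a comparison step (locally finite nuclear dimension plus $m$-comparison of $A$ gives a weak comparison property for $A_\omega\cap A'$, cf.\ \cite[Proposition 6.5]{Wi12}) and a divisibility step (locally finite nuclear dimension plus tracial $m$-almost divisibility of $A$ gives uniform McDuff, cf.\ \cite[Lemma 5.11]{Wi12}).  You bundle these together in your Step~3, which obscures that the two halves use the local approximations quite differently and are proved by separate arguments.

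Second, your description of the mechanism in Step~3 --- ``using almost divisibility on each matrix block of $F$, one chops each order-zero piece into $n$ approximately unitarily equivalent pieces'' --- is not how Winter's proof runs.  Almost divisibility is a Cuntz-semigroup statement and does not directly hand you unitarily equivalent sub-pieces of a given element; nor do the order zero components of a nuclear-dimension approximation of $B$ have any reason to approximately commute with $\mathcal F$.  The actual construction in \cite[Lemma 5.11]{Wi12} is considerably more delicate: it uses the tracial divisibility condition (iv) of Proposition~\ref{prop:Divisibility} to plant order zero copies of $M_n$ inside hereditary subalgebras determined by the approximation, and then exploits the structure of the nuclear-dimension factorisation to arrange approximate centrality.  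As a high-level plan your Step~3 points at the right ingredients, but the specific mechanism you sketch would not go through as written.
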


The proof of Theorem \ref{PureLFD} goes in three main steps:
\begin{enumerate}[(1)]
\item Tracial $m$-divisibility, and a suitable dimensional weakening of comparison are obtained from finite nuclear dimension (\cite[Proposition 4.7]{Wi12} and \cite[Theorem 6.3]{Ti14}).\footnote{This is not applied to $A$, but to the subalgebras given by the hypothesis of locally finite nuclear dimension.}
\item In the presence of locally finite nuclear dimension, the  comparison property obtained in step (1), is shown to imply a weak form of comparison for the central sequence algebra (this is implicit in the proof of \cite[Propositions 6.5]{Wi12}, and stated explicitly in \cite[Proposition 5.4]{RT17}).
\item Tracial $m$-divisibility is shown to imply a divisibility condition for the central sequence algebra in the presence of locally finite nuclear dimension.
\end{enumerate}
The output of (2) and (3) then directly entails $\mathcal Z$-stability. With hindsight, we now recognise the output of step (3) as the uniform McDuff property.

\begin{theorem}[Winter]
Let $A$ be a simple separable $C^*$-algebra with $T(A)$ non-empty and compact and which is tracially $m$-almost divisible for some $m\in\mathbb N$ and has locally finite nuclear dimension.  Then $A$ has uniform property $\Gamma$.
\end{theorem}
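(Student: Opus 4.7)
By Theorem \ref{GammaMcDuff}, it suffices to show that $A$ is uniformly McDuff. The hypotheses ensure that $A$ fits the framework of that theorem: locally finite nuclear dimension implies nuclearity (since nuclearity is preserved under local approximations), and tracial $m$-almost divisibility rules out the case that $A$ is finite-dimensional (any c.p.c.\ order zero map $M_k\to M_j$ with $k>j$ is zero, so $M_j$ is not tracially $m$-almost divisible for any $m$). Simplicity of $A$ then precludes finite dimensional quotients.

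\textbf{Reduction to a local statement.} By Remark \ref{GammaMcDuffRem}(ii) and central surjectivity (Lemma \ref{CentSurject}), the uniform McDuff property is equivalent to the existence, for each $n\in\N$, of a c.p.c.\ order zero map $\phi:M_n\to A_\omega\cap A'$ with $\tau(\phi(1_{M_n}))=1$ for all $\tau\in T_\omega(A)$. Via Kirchberg's $\epsilon$-test, this reduces to showing that for every $n\in\N$, finite $\mathcal F\subset A$, and $\epsilon>0$, there exists a c.p.c.\ order zero map $\phi:M_n\to A$ with $\|[\phi(x),b]\|<\epsilon$ for $x\in M_n$ of norm at most $1$ and $b\in\mathcal F$, and with $\sup_{\tau\in T(A)}(\tau(e)-\tau(\phi(1_{M_n})))<\epsilon$, where $e\in A_+$ is a contraction satisfying $\|1_{A^\omega}-e\|_{2,T_\omega(A)}<\epsilon$ supplied by Proposition \ref{UnitalProp}.

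\textbf{Construction.} We follow the three-step scheme of \cite{Wi12}. First, locally finite nuclear dimension furnishes a subalgebra $B\subseteq A$ with $\dimnuc B<\infty$ approximately containing $\mathcal F\cup\{e\}$. Second, tracial $m$-almost divisibility applied to $e$ yields a c.p.c.\ order zero map $\psi:M_n\to\overline{eAe}$ with $\tau(\psi(1_{M_n}))\geq\tfrac{1}{m+1}\tau(e)-\epsilon$ uniformly in $\tau\in T(A)$. Grafting $\psi$ onto the c.p.\ approximation structure witnessing $\dimnuc B<\infty$, as in the proof of \cite[Proposition 4.7]{Wi12}, produces a c.p.c.\ order zero map $\tilde\psi:M_n\to A$ that is approximately central for $\mathcal F$ in norm while retaining a uniform tracial lower bound $\tau(\tilde\psi(1_{M_n}))\geq \tfrac{1}{m+1}\tau(e)-2\epsilon$. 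Third, we iterate within $A_\omega\cap A'$: each successive order zero copy of $M_n$ is constructed inside the orthogonal complement of the cumulative support, and the weak comparison property for $A_\omega\cap A'$ inherited from locally finite nuclear dimension (via \cite[Proposition 6.5]{Wi12} or \cite[Proposition 5.4]{RT17}) allows a fresh copy to absorb a fixed fraction of the residual tracial mass $e-\phi_j(1_{M_n})$ at each stage. A geometric series estimate then yields a c.p.c.\ order zero map whose value at $1_{M_n}$ is $\|\cdot\|_{2,T_\omega(A)}$-close to $e$, and hence to $1_{A^\omega}$.

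\textbf{Main obstacle.} The central difficulty is the amplification in the third step. A single application of tracial $m$-almost divisibility captures only a fraction $\tfrac{1}{m+1}$ of the tracial mass, so promoting the local order zero maps to a unital order zero map in $A_\omega\cap A'$ requires the weak comparison property for central sequences. This is precisely where locally finite nuclear dimension enters as an essential hypothesis, providing the dimensional control over Cuntz comparison inside $A_\omega\cap A'$ that underpins the technical heart of \cite[Section 6]{Wi12}.
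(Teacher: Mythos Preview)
Your overall strategy matches the paper's: both reduce to showing that $A$ is uniformly McDuff and then use the (essentially immediate) implication (ii)$\Rightarrow$(iii) of Theorem~\ref{GammaMcDuff}. The paper, however, does not attempt to sketch Winter's argument; it simply cites \cite[Lemma~5.11]{Wi12} for the unital case and \cite[Theorem~7.6]{Ti14} in general, observing that their output is exactly the uniformly tracially large order zero maps $M_n\to A_\omega\cap A'$ described in Remark~\ref{GammaMcDuffRem}(ii). (A side note: the implication (ii)$\Rightarrow$(iii) of Theorem~\ref{GammaMcDuff} does not actually use the nuclearity or no-finite-dimensional-quotient hypotheses, so your verification of these, while correct, is not strictly necessary.)

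Where your sketch departs from Winter's actual proof is in the amplification step. You invoke weak comparison in $A_\omega\cap A'$ via \cite[Proposition~6.5]{Wi12} and \cite[Proposition~5.4]{RT17}, but those results require an $m$-comparison hypothesis on $A$ --- this is step~(2) in the paper's outline of the proof of Theorem~\ref{PureLFD}, and it is \emph{not} among the hypotheses of the present theorem. Winter's \cite[Lemma~5.11]{Wi12} instead obtains the full tracial mass from tracial $m$-almost divisibility combined with the colour decomposition supplied by finite nuclear dimension, without ever appealing to comparison; the geometric-series iteration you describe is closer to what one would do if comparison were available, but it is not the mechanism here. So your outline is correct at the top level and the reduction is right, but the ``main obstacle'' paragraph points to the wrong ingredient --- you should route directly through \cite[Section~5]{Wi12} rather than through \cite[Section~6]{Wi12}.
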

\begin{proof}
This is \cite[Theorem 7.6]{Ti14} (\cite[Lemma 5.11]{Wi12} for the unital case), noting that the output of this theorem (and an $\epsilon$-test to move from $B$ with finite nuclear dimension to $A$ with locally finite nuclear dimension) produces the stronger McDuff type condition that for any $n\in\mathbb N$, there is an order zero map $\phi:M_n\rightarrow A_\omega\cap A'$ such that $\tau(\phi(1_{M_n}))=1$ for all $\tau\in T_\omega(A)$.\footnote{Actually \cite{Ti14} works with the sequence algebra $A_\infty$ relative commutant rather than the ultrapower, but this is equivalent (see Remark \ref{GammaMcDuffRem}).}
This conclusion is equivalent to condition (ii) of Theorem \ref{GammaMcDuff}, which immediately implies uniform property $\Gamma$.
\end{proof}

A major breakthrough was made by Matui and Sato in \cite{MS12}, which had the effect of removing the locally finite nuclear dimension hypothesis from (2) above.  Precisely they obtained a weak central version of comparison ---property (SI); see \cite[Definition 3.3]{Sa10} or \cite[Definition 2.6]{KR14} for the definition --- from comparison (and as Kirchberg and R\o{}rdam show, this even works for various weak versions of comparison as the hypothesis).  As a consequence, in the presence of an appropriate central divsibility condition, Matui and Sato deduce (iii)$\Rightarrow$(ii) in the Toms--Winter conjecture. The condition needed is precisely that the $C^*$-algebra is uniformly McDuff, which Matui and Sato showed was automatic in the presence of finitely many extremal traces.  Using Theorem \ref{GammaMcDuff}, the Toms--Winter conjecture holds in the presence of uniform property $\Gamma$.

\begin{theorem}
\label{thm:ZstableIffGammaComparison}
	Let $A$ be a simple, separable, unital, nuclear $C^*$-algebra with $T(A) \neq \emptyset$. The following are equivalent:
\begin{itemize}
	\item[(i)] $A \cong A \otimes \Z$.
	\item[(ii)] $A$ has strict comparison and uniform property $\Gamma$.
\end{itemize}
In particular the Toms--Winter conjecture holds under the hypothesis of uniform property $\Gamma$.
\end{theorem}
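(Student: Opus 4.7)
The forward implication (i) $\Rightarrow$ (ii) is essentially classical: strict comparison for simple $\mathcal Z$-stable $C^*$-algebras was established by R\o{}rdam in \cite{Ro04}, while uniform property $\Gamma$ for $\mathcal Z$-stable $A$ is recorded in \cite[Proposition 2.3]{CETWW}; the latter exploits the approximate central divisibility of $1_{\mathcal Z}$ into $n$ equal projections inside $\mathcal Z^\omega \cap \mathcal Z'$, transferred through the isomorphism $A \cong A \otimes \mathcal Z$.

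The substance of the theorem lies in the reverse direction (ii) $\Rightarrow$ (i). The plan is to first upgrade uniform property $\Gamma$ to the uniform McDuff property via Theorem \ref{GammaMcDuff}, and then combine this with strict comparison through the Matui--Sato machinery to conclude $\mathcal Z$-stability. To apply Theorem \ref{GammaMcDuff} one verifies its hypotheses: $A$ is nuclear by assumption, $T(A)$ is non-empty and compact since $A$ is unital, and $A$ has no finite dimensional quotients --- otherwise simplicity would force $A \cong M_k$, but then $A^\omega \cap A' \cong \mathbb C$, which is incompatible with uniform property $\Gamma$ at $n=2$. Theorem \ref{GammaMcDuff} then produces unital embeddings $M_n \hookrightarrow A^\omega \cap A'$ for every $n \in \mathbb N$. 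With these in hand, the Matui--Sato argument of \cite{MS12} completes the proof: strict comparison is used to extract property (SI), which combined with the uniformly McDuff matrix embeddings yields approximately central embeddings of prime dimension drop algebras into $A^\omega \cap A'$ with full spectrum, thereby witnessing $\mathcal Z$-stability.

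The Toms--Winter statement is then a short corollary: the equivalence of finite nuclear dimension and $\mathcal Z$-stability in Conjecture \ref{TW} holds in the stated generality by the combined work of \cite{Wi12, MS14, Ti14, CETWW, CE}, and $\mathcal Z$-stability implies strict comparison by R\o{}rdam \cite{Ro04}. The missing implication --- strict comparison implies $\mathcal Z$-stability --- is exactly (ii) $\Rightarrow$ (i) above under the additional hypothesis of uniform property $\Gamma$. The main obstacle in the overall argument is the Matui--Sato machinery in the general setting of a compact but potentially infinite dimensional tracial state space; the original \cite{MS12} handled only finitely many extremal traces (where the uniform McDuff property was automatic), and it is the refinements of Kirchberg--R\o{}rdam \cite{KR14} (which extract property (SI) from strict comparison in the central sequence algebra) together with the new characterisation of uniform McDuff via uniform property $\Gamma$ in Theorem \ref{GammaMcDuff} that enable the full generality asserted here.
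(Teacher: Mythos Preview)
Your proof is correct and follows essentially the same route as the paper's: for (i)$\Rightarrow$(ii) you cite R\o{}rdam and \cite[Proposition 2.3]{CETWW}; for (ii)$\Rightarrow$(i) you pass through Theorem \ref{GammaMcDuff} to uniform McDuff and then invoke the Matui--Sato machinery from \cite{MS12} (the paper pins this down as \cite[Theorem 2.6]{TWW15}). Two minor imprecisions in your closing discussion: the property (SI) step from strict comparison is already in \cite{MS12} for simple nuclear unital $C^*$-algebras and does not require the refinements of \cite{KR14}; and the dimension-drop embeddings one eventually obtains live in $A_\omega \cap A'$ rather than $A^\omega \cap A'$.
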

\begin{proof}
	(ii) $\Rightarrow$ (i): Suppose $A$ has uniform property $\Gamma$. Then $A$ has CPoU by \cite[Theorem 3.7]{CETWW}. Let $n \in \N$. By Theorem \ref{GammaMcDuff} and Remark \ref{GammaMcDuffRem}, there exists a c.p.c.\ order zero map $\phi:M_n
 \to A_\omega \cap A'$ such that $\tau(\phi(1_{M_n}))=1$ for all $\tau\in T_\omega(A)$.  These are the uniformly tracially large order zero maps in the sense of \cite[Definition 2.2]{TWW15}. When $A$ additionally has strict comparison of positive elements, it follows from   Matui and Sato's theorem in \cite{MS12} (this precise statement can be found as \cite[Theorem 2.6]{TWW15}) that $A$ is $\mathcal Z$-stable.
	
 (i) $\Rightarrow$ (ii): Suppose $A$ is $\Z$-stable. Then $A$ has strict comparison by \cite[Theorem 4.5]{Ro04} and has uniform property $\Gamma$ by \cite[Proposition 2.3]{CETWW}.
\end{proof}

At the moment, it remains an open problem whether all infinite dimensional, simple, nuclear $C^*$-algebras have uniform property $\Gamma$. Matui and Sato's work \cite{MS12} was subsequently extended from finitely many extremal traces, to compact extremal tracial boundaries of finite covering dimension \cite{Sa12,KR14,TWW15}. With hindsight, the reason is that these algebras always have uniform property $\Gamma$ -- a fact closely linked to Ozawa's investigation in \cite{Oz13} into the structure of the strict closures of such $C^*$-algebras.

\begin{proposition}[{cf.\ \cite[Theorem 4]{Oz13}}]
Let $A$ be a non-elementary simple, separable, unital, nuclear $C^*$-algebra with $\partial_eT(A)$ compact and finite-dimensional.  Then $A$ has uniform property $\Gamma$.
\end{proposition}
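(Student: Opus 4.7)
The plan is to reduce to the Bauer case via Corollary \ref{cor:ReductionToAis1} and then glue local matrix embeddings across $\partial_e T(A)$ using a partition of unity whose multiplicity is controlled by the covering dimension, in the spirit of Ozawa's analysis in \cite{Oz13}. Since $\partial_e T(A)$ is compact, $T(A)$ is Bauer, and Corollary \ref{cor:ReductionToAis1} reduces the task to producing, for each $n \in \mathbb N$, pairwise orthogonal projections $p_1,\dots,p_n \in A^\omega \cap A'$ summing to $1_{A^\omega}$ with $\tau(p_i) = 1/n$ for all $\tau \in T_\omega(A)$. Equivalently, by Theorem \ref{GammaMcDuff}, it suffices to produce a unital embedding $M_n \hookrightarrow A^\omega \cap A'$, whose diagonal matrix units furnish the $p_i$.

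For each $\tau \in \partial_e T(A)$, Proposition \ref{prop:TypeII} (using simplicity and non-elementariness of $A$) shows that $\pi_\tau(A)''$ is a type II$_1$ factor, and nuclearity of $A$ combined with Connes' theorem \cite{Co76} identifies it with the hyperfinite II$_1$ factor $\mathcal R$. Proposition \ref{prop:UltrapowerMatrixEmbeddings} then yields a unital embedding $M_n \hookrightarrow (\pi_\tau(A)'')^\omega \cap \pi_\tau(A)'$, which by Kaplansky density and order zero lifting (Proposition \ref{prop:OrderZeroLifting}) is induced by a c.p.c.\ order zero map $\phi_\tau:M_n \to A$ that is approximately central in $\|\cdot\|_{2,\tau}$ against a prescribed finite subset of $A$, with $\tau(\phi_\tau(1_{M_n}))$ close to $1$. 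These approximate properties persist on a weak$^*$-neighbourhood $U_\tau$ of $\tau$ in $\partial_e T(A)$ by continuity of trace evaluation.

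By compactness and finite-dimensionality of $\partial_e T(A)$, we may pass to a finite refinement $U_{\tau_1},\dots,U_{\tau_k}$ of covering multiplicity at most $d+1$, where $d = \dim \partial_e T(A)$, and take a subordinate partition of unity $(h_i)$, interpreted as positive contractions in $A^\omega \cap A'$ via the canonical inclusion of continuous affine functions on $T(A)$ into the centre of the strict closure of $A$, followed by strict approximation in $A$. Setting $\phi \coloneqq \sum_i h_i^{1/2}\phi_{\tau_i}(\cdot)h_i^{1/2}$ and extracting via Kirchberg's $\epsilon$-test \cite[Lemma A.1]{Kir06} yields a unital c.p.c.\ order zero map $M_n \to A^\omega \cap A'$, which, being unital and order zero, is a $^*$-homomorphism. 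The main obstacle is controlling the cross-terms arising from non-commutativity of the $\phi_{\tau_i}$ with one another and with the $h_i$, since $\phi$ is initially only approximately order zero and approximately unital in $\|\cdot\|_{2,T_\omega(A)}$; the multiplicity bound $d+1$ keeps the number of cross-terms at any point uniformly controlled (independently of $k$), which is exactly what is needed for the $\epsilon$-test argument to close the approximation.
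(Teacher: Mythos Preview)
Your approach is in the right spirit --- you are essentially sketching a direct proof of \cite[Theorem 4.6]{TWW15} (equivalently the relevant parts of \cite{Sa12,KR14,Oz13}) --- whereas the paper simply \emph{cites} that result: by \cite[Theorem 4.6]{TWW15}, $A$ admits uniformly tracially large c.p.c.\ order zero maps $M_n \to A_\infty \cap A'$, which by Remark~\ref{GammaMcDuffRem}(ii) is the uniform McDuff property, and then the easy implication (ii)$\Rightarrow$(iii) of Theorem~\ref{GammaMcDuff} gives uniform property $\Gamma$. So the paper's proof is two lines; yours attempts to reconstruct the underlying machinery.

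That said, your gluing step has a genuine gap. The map $\phi = \sum_i h_i^{1/2}\phi_{\tau_i}(\cdot)h_i^{1/2}$ is \emph{not} approximately order zero merely because the multiplicity of the cover is bounded by $d+1$. For orthogonal $a,b \in M_n$, the cross terms $h_i^{1/2}\phi_{\tau_i}(a)h_i^{1/2}h_j^{1/2}\phi_{\tau_j}(b)h_j^{1/2}$ with $i\neq j$ need not be small: bounded multiplicity limits \emph{how many} such terms are nonzero at each point, but does nothing to control their \emph{size}. Kirchberg's $\epsilon$-test can only upgrade conditions that are satisfied to arbitrary precision; here the order zero defect is bounded below by a quantity depending on $d$, not on $\epsilon$, so the $\epsilon$-test will not close the argument as written.

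The standard repair --- which is precisely the content of \cite[Theorem 4.6]{TWW15} and Ozawa's \cite[Theorem 15]{Oz13} --- is to first refine the cover so that the index set partitions into $d+1$ colours, each colour consisting of sets with pairwise disjoint closures. Then within each colour the sum \emph{is} (approximately) order zero, yielding $d+1$ order zero maps $\psi_0,\dots,\psi_d:M_n \to A^\omega\cap A'$ with $\sum_c \psi_c(1_{M_n}) = 1_{A^\omega}$. Combining these into a single unital order zero map requires a further nontrivial step (in \cite{TWW15} this uses an inductive amplification trick starting from larger matrix algebras; in \cite{Oz13} it goes via triviality of the associated $W^*$-bundle). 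None of this is visible in your sketch, and it is exactly where the finite-dimensionality hypothesis does its real work.
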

\begin{proof}
By \cite[Theorem 4.6]{TWW15}, $A$ admits uniformly tracially large c.p.c.\ order zero maps $M_n \to A_\infty \cap A'$; by Theorem \ref{GammaMcDuff} and Remark \ref{GammaMcDuffRem} (ii)$\Rightarrow$(iii), it follows that $A$ has uniform property $\Gamma$.
\end{proof}

Just as Matui and Sato were able to make a major breakthrough by removing the locally finite nuclear dimension hypothesis from Step (2) of the proof of Theorem \ref{PureLFD}, an important problem is whether it is possible to remove the locally finite nuclear dimension from Step 3 (and hence also from Theorem \ref{PureLFD}):

\begin{question}
Suppose that $A$ is a non-elementary simple, separable, unital and nuclear $C^*$-algebra with a tracial divisibility property. Does $A$ have uniform property $\Gamma$?
\end{question}

We round off this section by presenting a further family of examples of $C^*$-algebras with uniform property $\Gamma$. This family includes the non-$\Z$-stable Villadsen algebras of the first type constructed in \cite{Vi98}.\footnote{Note that Villadsen's ``second type'' examples from \cite{Vi99} also have uniform property $\Gamma$, since they are nuclear with unique trace.}

\begin{definition}
Let $X$ and $Y$ be compact Hausdorff spaces. A $^*$-homomorphism $\phi: C(X) \to M_n
 \otimes C(Y)$ is said to be \emph{diagonal} if there exist continuous functions $\lambda_1, \ldots, \lambda_n: Y \to X$ and matrix units $e_{rs} \in M_n$ 
 such that 
\begin{equation}
\phi(f) = \sum_{r=1}^n e_{rr} \otimes (f \circ \lambda_r), \quad f \in C(X).
\end{equation}
Matrix amplifications of diagonal maps are also said to be diagonal. 
\end{definition}

A $C^*$-algebra that can be represented as an inductive limit of (trivial) homogeneous $C^*$-algebra with diagonal connecting maps is called a \emph{diagonal AH} algebra (following \cite{EHT}, for example). For such algebras, uniform property $\Gamma$ can often be verified explicitly, as we now show. 

\begin{proposition}[{cf.\ \cite[Proposition 4.6.8]{Ev18}}]
\label{prop:DiagonalAHGamma}
Let $A$ be given by the inductive sequence
	\begin{equation}
	M_{n_1}
\otimes C(X_1) \stackrel{\phi_1}{\longrightarrow}  M_{n_2}
\otimes C(X_2) \stackrel{\phi_2}{\longrightarrow}  M_{n_3}
\otimes C(X_3) \ldots, 
	\end{equation}
where the connecting maps are diagonal and $n_i \rightarrow \infty$. Then $A$ has uniform property $\Gamma$ whenever $T(A)$ is a Bauer simplex.
\end{proposition}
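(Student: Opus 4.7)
The plan is to invoke Corollary \ref{cor:ReductionToAis1}, which, under the Bauer hypothesis on $T(A)$, reduces uniform property $\Gamma$ to producing, for each $n \in \mathbb{N}$, pairwise orthogonal projections $p_1,\dots,p_n \in A^\omega \cap A'$ summing to $1_{A^\omega}$ with $\tau(p_l) = 1/n$ for every $\tau \in T_\omega(A)$. By a standard application of Kirchberg's $\epsilon$-test (in the spirit of Proposition \ref{P2.2}), it suffices to construct, for each finite $\mathcal{F} \subset A$, each $\epsilon > 0$, and each $n$, pairwise orthogonal projections $e_1,\dots,e_n \in A$ summing to $1_A$ with $\|[e_l,a]\| < \epsilon$ for $a \in \mathcal{F}$ and $|\tau(e_l) - 1/n| < \epsilon$ uniformly in $\tau \in T(A)$.

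The construction exploits the following algebraic feature of diagonal maps. For $j > i$, the iterated connecting map $\phi_{i,j}: M_{n_i} \otimes C(X_i) \to M_{n_j} \otimes C(X_j)$ is again diagonal, so on elementary tensors it takes the form
\begin{equation}
\phi_{i,j}(a \otimes f) = \sum_{r=1}^{N} (e_{rr} \otimes a) \otimes (f \circ \lambda_r)
\end{equation}
in the decomposition $M_{n_j} \otimes C(X_j) \cong M_N \otimes M_{n_i} \otimes C(X_j)$, where $N \coloneqq n_j/n_i$ and the $e_{rr}$ are diagonal matrix units in $M_N$. Consequently, for every diagonal projection $q \in M_N$, the element $q \otimes 1_{M_{n_i}} \otimes 1_{C(X_j)}$ commutes exactly with the image of $\phi_{i,j}$. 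Given $\mathcal{F}$, $\epsilon$, and $n$, one first approximates $\mathcal{F}$ to within $\epsilon$ by elements of the image of $M_{n_i} \otimes C(X_i)$ in $A$, then chooses $j > i$ with $N = n_j/n_i > n/\epsilon$, and selects pairwise orthogonal diagonal projections $q_1,\dots,q_n \in M_N$ with ranks $\lfloor N/n \rfloor$ or $\lceil N/n \rceil$ and $\sum_l q_l = 1_{M_N}$. Setting $e_l \coloneqq q_l \otimes 1_{M_{n_i}} \in M_{n_j} \subset A$ yields pairwise orthogonal projections summing to $1_A$ which exactly commute with the image of $M_{n_i} \otimes C(X_i)$, hence approximately commute with $\mathcal{F}$ to within $\epsilon$. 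The trace values are immediate: every tracial state on $M_{n_j} \otimes C(X_j)$ is of the form $\mathrm{tr}_{n_j} \otimes \mu$ for some probability measure $\mu$ on $X_j$, so $\tau(e_l) = \mathrm{rank}(q_l)/N$, which differs from $1/n$ by at most $1/N < \epsilon$.

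The conceptual point, and the main obstacle absent the Bauer hypothesis, is that the projections $e_l$ above only tracially divide the unit; they do not obviously satisfy $\tau(ae_l) \approx \tau(a)/n$ for arbitrary $a \in \mathcal{F}$ as required by condition (iii) of Proposition \ref{P2.2}. The Bauer hypothesis on $T(A)$ supplies the missing tracial factorisation through Proposition \ref{prop:BauerFactorization}, and this is precisely the input encoded in Corollary \ref{cor:ReductionToAis1}: once one has approximately central projections at the correct uniform trace value, tracial factorisation over the compact extreme boundary promotes division of $1_A$ to division of arbitrary elements. Beyond this, the argument is essentially combinatorial, resting on the explicit diagonal form of the connecting maps and the divergence $n_j \to \infty$, and proceeds without any further hypothesis on the inverse system of spaces $X_j$.
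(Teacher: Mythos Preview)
Your proof is correct and follows essentially the same approach as the paper: both construct diagonal projections in the matrix amplification factor $M_{n_j/n_i}$, observe these commute exactly with the image of the $i$-th stage and have the correct (normalised) rank under every trace, and then invoke the Bauer hypothesis via tracial factorisation to upgrade division of the unit to division of arbitrary elements. The only cosmetic difference is that the paper builds a single projection of trace approximately $\tfrac12$ and cites Proposition~\ref{prop:GammaEquivalence}(ii), whereas you build a full $n$-partition and cite Corollary~\ref{cor:ReductionToAis1}; the underlying mechanism is identical.
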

\begin{proof}
	Since diagonal maps are unital, we have $n_i | n_{i+1}$. As $n_i \to \infty$, we can refine the inductive sequence and assume without loss of generality that $k_i \coloneqq  \frac{n_{i+1}}{n_i} \to \infty$.

Identifying $M_{n_{i+1}}
\otimes C(X_{i+1})$ with $M_{k_i}
\otimes C(X_{i+1}, M_{n_i})$, 
we can view $\phi_i $ as the map 
\begin{align}
	\phi_i : M_{n_i}
\otimes C(X_i) &\to M_{k_i}
\otimes C(X_{i+1}, M_{n_i}
) \notag \\
	f & \mapsto \sum_{r=1}^{k_i} e_{rr} \otimes  (f \circ \lambda_r) ,
	\end{align}
	for some continuous functions $\lambda_1, \ldots, \lambda_{k_i}: X_{i+1} \to X_i$ and matrix units $e_{rs} \in M_{k_i}$.

Writing $\mu_i: M_{n_i}
 \otimes C(X_i) \to A$ for the canonical maps into the inductive limit, we define 
	\begin{equation} 
	q_i \coloneqq  \sum_{r=1}^{\lfloor \frac{k_i}{2} \rfloor} e_{rr} \otimes 1_{M_{n_i}}  \in M_{k_i} 
\otimes C(X_{i+1}, M_{n_i}) 
	\end{equation}
	and
	\begin{equation}
	 p_i \coloneqq  \mu_{i+1}(q_i) \in A.
	\end{equation}

By construction, $p_i$ is a projection in $A$ that commutes with the image of $\mu_{i}$. As the sequence $(p_i)_{i=1}^\infty$ is uniformly bounded, it follows that 
	$\lim_{i \to \infty} \| [p_i, a] \|= 0$ for all $a \in A$. 
		
Also by construction, we have that $\tau(q_i)\in [\frac{1}{2} - \frac{1}{k_i}, \frac{1}{2}]$ for all $\tau \in T(M_{n_{i+1}}
 \otimes C(X_{i+1}))$. Since every trace on $A$ pulls back to a trace on $M_{n_{i}}
 \otimes C(X_i)$ and $k_i \to \infty$, it follows 
	\begin{equation}
	\lim_{i \to \infty}\sup_{\tau \in T(A)} | \tau(p_i) - \tfrac{1}{2}| = 0.
	\end{equation}
	Let $p$ be the element of $A^\omega$ induced by the sequence $(p_i)_{i=1}^\infty$. Then $p \in A^\omega \cap A'$ and $\tau(p) = \frac{1}{2}$ for all $\tau \in T_\omega (A)$. Since we are assuming that $T(A)$ is a Bauer simplex, we have
	\begin{equation}
	\tau(pa) = \tau(p)\tau(a) = \frac{1}{2} \tau(a), \qquad a \in A, \tau \in T_\omega (A),
	\end{equation}
by Proposition \ref{prop:BauerFactorization}. It now follows from Proposition \ref{prop:GammaEquivalence}(ii) that $A$ has uniform property $\Gamma$.
\end{proof} 

	In particular, Villadsen's example in \cite{Vi98}, and more generally all of the ``Villadsen algebras of the first type'' constructed in \cite[Section 8]{TW09} (including non-$\mathcal Z$-stable examples) have uniform property $\Gamma$ since these are diagonal AH algebras with Bauer trace simplices (see \cite[Section 8]{TW09} and the computations of \cite[Theorem 4.1]{To08b}).

\section{Classification by embeddings, revisited}
\label{sec:CBErevisited}

 In 2015, the classification of simple, separable, unital $C^*$-algebras of finite nuclear dimension in the UCT class was completed by combining \cite{GLN,EGLN15,TWW17} (themselves building on an enormous body of earlier work) through a combination of classification theorems for $C^*$-algebras with good tracial approximations, and abstract machinary for accessing these.  Now, using \cite{CETWW}, $\mathcal Z$-stability can be used in place of finite nuclear dimension to access classification.  We end the paper by explaining how $\Z$-stability and finite nuclear dimension are used in the tracial approximation approach to classification, and how the CPoU methods of \cite{CETWW} fit in.\footnote{CPoU (and hence uniform property $\Gamma$ as the tool for accessing it) is also crucial to the new abstract approaches to classification \cite{CGSTW,CGSTW2} emerging from Schafhauser's approach to the quasidiagonality theorem \cite{S}, and AF-embeddings \cite{S2}.} Recall that for simple $C^*$-algebras, both of the equivalent conditions of finite nuclear dimension or $\Z$-stability give rise to a dichotomy: such algebras are either stably finite or purely infinite.\footnote{Both dichotomy results rely on work of Kirchberg. The finite nuclear dimension result was obtained by Winter and Zacharias as \cite[Theorem 5.4]{WZ10}; the $\Z$-stability result is recorded in \cite[Theorem 4.1.10]{Ro02}.} The purely infinite classification theorem was established by Kirchberg and by Phillips (\cite{Kir:Unpublished,Phi00}) in the 90s, so in the rest of this section we restrict to stably finite $C^*$-algebras.

Internal approximations by building blocks are familiar in operator algebras, originating in Murray and von Neumann's work on hyperfinite II$_1$ factors and appearing prominently in the subsequent classification of approximately finite dimensional $C^*$-algebras \cite{Br72,El76}, and various more general inductive limit models.
In \cite{Li01}, (building on Popa's earlier local quantisation in the setting of $C^*$-algebras \cite{Po97}), Lin developed a new kind of internal approximation property for $C^*$-algebras, now known as tracial approximation.  Let us recall the precise definitions in a form suitable for our discussion. 

\begin{definition}[{cf.\ \cite{Li01,EN08}}]\label{TABlah}
Let $\mathcal{S}$ denote a class of separable unital $C^*$-algebras closed under isomorphism. A simple unital $C^*$-algebra $A$ is \textit{tracially approximately} $\mathcal{S}$ (TA$\mathcal{S}$) if for every finite subset $\mathcal{F} \subset A$, every $\epsilon > 0$, and every positive element $c\in A_+$, there is a projection $p \in A$ and a unital $C^*$-subalgebra $B \subset pAp$ with $1_B =p$ and $B \in \mathcal{S}$ such that
\begin{enumerate}[(i)]
	\item $\| pa -ap \| < \epsilon$ for all $a \in \mathcal{F}$,
	\item $\mathrm{dist}(pap,B) < \epsilon$ for all $a \in \mathcal{F}$, and
	\item $1_A - p$ is Murray--von Neumann equivalent to a projection in $\overline{cAc}$.
\end{enumerate}
When $T(A)\neq\emptyset$ and $A$ has strict comparison of positive elements by traces (e.g., if it is unital, exact and $\mathcal Z$-stable), one can replace (iii) by the following condition (where the tracial nature of the internal approximation is more evident).
\begin{enumerate}
\item[(iii$'$)] $\tau(p)>1-\epsilon$ for all traces $\tau \in T(A)$.
\end{enumerate}
A simple unital $C^*$-algebra $A$ is said to be \emph{rationally} TA$\mathcal S$, if $A\otimes\mathcal U$ is TA$\mathcal S$ for every UHF algebra $\mathcal U$ of infinite type.
\end{definition} 

Lin's original work \cite{Li01} focused on tracially AF\footnote{Note that, within the class of unital simple $C^*$-algebras, the property of being TAF is equivalent to Lin's later notion of tracial topological rank zero of \cite[Definition 3.1]{L:PLMS01}; see \cite[Theorem 7.1]{L:PLMS01}.} (TAF) $C^*$-algebras, firstly giving an abstract characterisation of the universal UHF-algebra within the class of simple, separable, unital, nuclear, TAF $C^*$-algebras with the UCT, and then subsequently obtaining a complete classification of this class \cite{Lin04}.  Under trace space conditions, this theorem can be accessed abstractly through finite decomposition rank (a precursor of finite nuclear dimension from \cite{KW04}, which entails quasidiagonality of all traces); see \cite{Wi05}. 

Over time the class of algebras classified via by tracial approximation methods expanded, culminating in Gong--Lin--Niu's classification in \cite{GLN} of simple, separable, unital, $\mathcal Z$-stable, nuclear $C^*$-algebras with the UCT which are rationally TA$\mathcal S$, where $\mathcal S$ denotes the class of so called Elliott--Thomsen building blocks.\footnote{These are also known as point-line algebras or 1-dimensional NCCW complexes. The exact form of these building blocks is not needed here.}
(This class exhausts the Elliott invariant by \cite[Theorem 13.46]{GLN}.)
Here, $\mathcal Z$-stability plays a key role, as it allows asymptotic classification results to feed into Winter's strategy from \cite{Wi14} to pass from the TA$\mathcal S$ class to the (larger) $\mathcal Z$-stable, rationally TA$\mathcal S$ class.
On the other hand, finite nuclear dimension does not play a role in this argument, only $\Z$-stability; thus if one works with finite nuclear dimension as the regularity hypothesis in classification via tracial approximations, it is necessary to use Winter's long and difficult `$\Z$-stability' theorem from \cite{Wi12}.

The other major component in the tracial approximation approach to $C^*$-algebra classification is an abstract criterion for identifying the rationally TA$\mathcal S$ class. 
A new approach to problems of this nature was pioneered by Matui and Sato in \cite[Theorem 6.1]{MS14}, where they obtained rationally TAF approximations for a simple, nuclear, quasidiagonal $C^*$-algebra with a unique tracial state, directly from quasidiagonality.  Around the same time, Winter developed his `classification by embeddings' technique (\cite[Theorem 2.2]{Wi16}) which allows one to use finite nuclear dimension to convert certain tracial approximate factorisations to tracial approximations in the sense of Definition \ref{TABlah}.\footnote{See the statement of Theorem \ref{ClassEmbeddings} below for a more precise statement}  In \cite{EGLN15}, Elliott, Gong, Lin, and Niu used a quasidiagonality hypothesis in the spirit of Matui--Sato (the precise condition is quasidiagonality of all traces), to obtain the required input to the classification by embeddings theorem.
Simultaneously, this quasidiagonality hypothesis was shown to be automatic in the presence of the UCT \cite{TWW17}.  In summary, these papers come together to show that simple, separable, unital $C^*$-algebras of finite nuclear dimension in the UCT class are rationally TA$\mathcal S$, and hence classified by \cite{GLN}.  In this latter (classification by embeddings) part of the argument, finite nuclear dimension rather than $\mathcal Z$-stability is the regularity hypothesis being directly employed.

We now know, by the main results of \cite{CETWW,CE}, that finite nuclear dimension and $\Z$-stability are equivalent for simple, separable,  non-elementary, nuclear $C^*$-algebras. So following the outline above, to classify from the hypothesis of $\Z$-stability, we should obtain finite nuclear dimension from \cite{CETWW}, and feed this into the classification by embeddings theorem.  However it turns out, as we now show, that the methods of \cite{CETWW} and \cite{BBSTWW} also give a direct proof of the classification by embeddings theorem from $\mathcal Z$-stability without requiring the hypothesis of finite nuclear dimension, and so one does not need the full strength of finite nuclear dimension implies $\Z$-stability for such classification. This is inspired by Matui and Sato's strategy from \cite{MS14}.  Specifically, this has the effect of removing the finite nuclear dimension hypothesis from \cite[Theorem 2.2]{Wi16}.

\begin{theorem}\label{ClassEmbeddings}
	Let $\S$ be a class of separable, unital $C^*$-algebras which is closed under isomorphism and matrix amplifications.
	Let $A$ be a simple, separable, nuclear, unital $C^*$-algebra with $T(A) \neq \emptyset$ and let 
	\begin{equation}
	(A \overset{\sigma_i}{\longrightarrow} B_i \overset{\rho_i}{\longrightarrow} A)_{i=1}^\infty \label{eqn:System}
	\end{equation}
	be a system of maps with the following properties:
	\begin{itemize}
		\item[(i)] $B_i \in \S$, $i \in \mathbb{N}$,
		\item[(ii)] $\rho_i$ is an injective $^*$-homomorphism for each $i \in \mathbb{N}$,
		\item[(iii)] $\sigma_i$ is c.p.c.\ for each $i \in \mathbb{N}$,
		\item[(iv)] the map $\overline{\sigma}:A \rightarrow \prod_{\omega} B_i$ induced by the $\sigma_i$ is a unital $^*$-homomorphism,
		\item[(v)] $\sup \lbrace |\tau(\rho_i\circ\sigma_i(a)-a)| : \tau \in T(A)\rbrace \overset{i\rightarrow\omega}{\longrightarrow} 0$ for each $a \in A$.
	\end{itemize}
	Then, $A$ is rationally TA$\S$. 
\end{theorem}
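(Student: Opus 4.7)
The strategy follows the argument of Matui and Sato in \cite[Theorem 6.1]{MS14}, replacing the finite nuclear dimension hypothesis employed in \cite[Theorem 2.2]{Wi16} by CPoU and uniform property $\Gamma$. Let $\mathcal U$ be a UHF algebra of infinite type; the task is to show $A\otimes\mathcal U$ is TA$\S$. Since $\mathcal U$ tensorially absorbs $\Z$, the algebra $A\otimes\mathcal U$ is $\Z$-stable and hence has uniform property $\Gamma$ by \cite[Proposition 2.3]{CETWW}, so Theorem \ref{GammaMcDuff} provides CPoU and uniform McDuffness for $A\otimes\mathcal U$. Tensoring (\ref{eqn:System}) with $\id_\mathcal U$ yields an analogous system $(A\otimes\mathcal U\xrightarrow{\sigma_i\otimes\id_\mathcal U}B_i\otimes\mathcal U\xrightarrow{\rho_i\otimes\id_\mathcal U}A\otimes\mathcal U)_{i=1}^\infty$ satisfying the analogues of (i)--(v), while for each $n\in\mathbb N$ the unital embedding $M_n\hookrightarrow\mathcal U$ gives $B_i\otimes M_n\subseteq B_i\otimes\mathcal U$ unitally with $B_i\otimes M_n\in\S$ by matrix amplification closure.

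Fix a finite subset $\mathcal F\subset A\otimes\mathcal U$, $\epsilon>0$, and a non-zero $c\in(A\otimes\mathcal U)_+$. The plan is to construct, for sufficiently large $i,n\in\mathbb N$, a c.p.c.\ order zero map $\Psi\colon B_i\otimes M_n\to(A\otimes\mathcal U)^\omega$ such that $p\coloneqq\Psi(1)$ is a projection with $\tau(p)>1-\epsilon$ for every $\tau\in T_\omega(A\otimes\mathcal U)$, the image of $\Psi$ commutes with $\mathcal F$ up to $\epsilon$ in uniform trace norm, and $\|pap-\Psi((\sigma_i\otimes\id_\mathcal U)(a))\|_{2,T_\omega(A\otimes\mathcal U)}<\epsilon$ for each $a\in\mathcal F$. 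Viewed as a map into $p(A\otimes\mathcal U)^\omega p$, such a $\Psi$ is unital and hence a $^*$-homomorphism. Extracting a representative sequence via Kirchberg's $\epsilon$-test (together with functional calculus to turn an approximate projection into a genuine one) then yields the required projection $p\in A\otimes\mathcal U$ and subalgebra $B\coloneqq\Psi(B_i\otimes M_n)\in\S$ inside $p(A\otimes\mathcal U)p$. Condition (iii) of the TA$\S$ definition follows from strict comparison in $A\otimes\mathcal U$ (a consequence of $\Z$-stability) applied to $\tau(p)>1-\epsilon$.

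The construction of $\Psi$ follows the local-to-global scheme of Section \ref{sec:CPoU}. For each $\tau\in T(A\otimes\mathcal U)$, work in the GNS closure $\mathcal M_\tau\coloneqq\pi_\tau(A\otimes\mathcal U)''$, which is hyperfinite by Connes' theorem and therefore admits unital embeddings $M_n\to\mathcal M_\tau^\omega\cap\mathcal M_\tau'$ for every $n$ by Proposition \ref{prop:UltrapowerMatrixEmbeddings}. Combining such an embedding with conditions (i)--(v) applied to the extended system, one builds (for sufficiently large $i,n$) a c.p.c.\ map $\Phi_\tau\colon B_i\otimes M_n\to A\otimes\mathcal U$ which is approximately multiplicative and approximately central on $\mathcal F$ in $\|\cdot\|_{2,\tau}$, with $\tau(\Phi_\tau(1))$ close to $1$ and $\|\Phi_\tau((\sigma_i\otimes\id_\mathcal U)(a))-a\|_{2,\tau}<\epsilon^2$ for $a\in\mathcal F$. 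By continuity, these properties persist in a weak$^*$-neighbourhood of $\tau$, so compactness of $T(A\otimes\mathcal U)$ yields finitely many such maps $\Phi_1,\dots,\Phi_k$ sharing common $i$ and $n$. Encoding the defects of the $\Phi_j$ as positive elements $a_j\in A\otimes\mathcal U$ in the spirit of \eqref{l:CPoU-Gamma.ai} arranges $\sup_\tau\min_j\tau(a_j)<\epsilon^2$, so CPoU supplies pairwise orthogonal projections $p_1,\dots,p_k\in(A\otimes\mathcal U)^\omega\cap(A\otimes\mathcal U)'$ summing to $1$ and satisfying $\tau(a_jp_j)\leq\epsilon^2\tau(p_j)$. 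Setting $\Psi\coloneqq\sum_jp_j\Phi_j(\cdot)p_j$ and arguing as in the proof of Lemma \ref{lem:CPoUImpliesCentralDivis} produces the required order zero map.

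The principal obstacle lies in the local step above: one must ensure that $\Phi_\tau$ not only has nearly full trace on its unit but also that its image $\|\cdot\|_{2,\tau}$-approximates the compressions $p_\tau a p_\tau$ of each $a\in\mathcal F$, where $p_\tau\coloneqq\Phi_\tau(1)$. This was precisely the role of finite nuclear dimension in \cite{Wi16}. In the present approach, the unital $^*$-homomorphism property of $\overline{\sigma}\otimes\id_\mathcal U$, the tracial factorisation in condition (v), and the McDuff-type matrix embeddings $M_n\to\mathcal M_\tau^\omega\cap\mathcal M_\tau'$ jointly supply the local approximation. CPoU then converts these local $\|\cdot\|_{2,\tau}$-approximations into a global $\|\cdot\|_{2,T_\omega(A\otimes\mathcal U)}$-approximation on a corner of $A\otimes\mathcal U$ of nearly full trace, completing the proof.
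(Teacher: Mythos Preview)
Your proposal contains a genuine gap at the local step, and a secondary issue at the extraction step.

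\textbf{The local step does not follow from hypothesis (v).} You assert that for each $\tau$ one can build $\Phi_\tau\colon B_i\otimes M_n\to A\otimes\mathcal U$ with $\|\Phi_\tau((\sigma_i\otimes\id_{\mathcal U})(a))-a\|_{2,\tau}<\epsilon^2$ for $a\in\mathcal F$. But condition (v) only says that $\tau(\rho_i\sigma_i(a)-a)\to 0$, i.e., the induced map $\Phi\colon A\to A_\omega$ is \emph{trace-preserving}: $\tau\circ\Phi=\tau|_A$ for $\tau\in T_\omega(A)$. It does \emph{not} say that $\|\rho_i\sigma_i(a)-a\|_{2,\tau}\to 0$, which is what would be needed for $\Phi(a)=a$ in the tracial ultrapower and hence for your local approximation. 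Any construction of $\Phi_\tau$ from $\rho_i$ and central matrix embeddings will, on $B_i\otimes 1_{M_n}$, essentially reproduce $\rho_i$ (possibly conjugated), so $\Phi_\tau(\sigma_i(a)\otimes 1)$ is close to $\rho_i\sigma_i(a)$, not to $a$. Bridging this gap at a single trace would require an approximate uniqueness theorem for embeddings of $A$ into $\pi_\tau(A\otimes\mathcal U)''$ agreeing on trace, which you have not supplied. Your final paragraph asserts that (v) together with McDuff embeddings ``jointly supply the local approximation'', but this is exactly the missing step.

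\textbf{Even granting the local step, the extraction is problematic.} Your CPoU gluing produces $\Psi$ with values in the \emph{tracial} ultrapower $(A\otimes\mathcal U)^\omega$, and all approximations are in $\|\cdot\|_{2,T_\omega(A\otimes\mathcal U)}$. Lifting to a representative sequence yields maps $B_i\otimes M_n\to A\otimes\mathcal U$ that are only approximately multiplicative in trace norm, so their images are not $C^*$-subalgebras, and (ii) of Definition~\ref{TABlah} is a norm condition. Nothing in your outline converts trace-norm approximation into the required norm approximation with a genuine subalgebra.

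\textbf{How the paper proceeds.} The paper does not attempt a direct CPoU gluing. Instead it uses the trace-preserving property of $\Phi$ via a $2\times 2$ matrix trick: set $\pi(a)=\mathrm{diag}(\Phi(a)\otimes 1_{\mathcal U},\,a\otimes 1_{\mathcal U})$ and work in the relative commutant $C=M_2((A\otimes\mathcal U)_\omega)\cap\pi(A)'$. CPoU enters only indirectly, through \cite[Lemma~4.7]{CETWW}, which gives strict comparison of $C$ and a description of $T(C)$. One then compares two explicit projections $h_1,h_2\in C$ (built from a projection $p\in\mathcal U$ of trace slightly below $1$), shows $\mu(h_1)<\mu(h_2)$ for all $\mu\in T(C)$ using only the trace-preserving property of $\Phi$, and obtains a Murray--von Neumann subequivalence producing a partial isometry $w\in(A\otimes\mathcal U)_\omega$ that intertwines $\Phi\otimes 1_{\mathcal U}$ with the constant embedding. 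Lifting $w$ to honest partial isometries $w_i$ gives injective $^*$-homomorphisms $\hat\rho_i\colon B_i\to A\otimes\mathcal U$ whose images are genuine subalgebras with the required \emph{norm} approximation properties. This is precisely the mechanism that converts the weak tracial hypothesis (v) into norm conclusions, and it has no analogue in your outline.
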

\begin{proof}
Fix a UHF algebra $\mathcal U$ of infinite type. Without loss of generality, we may assume that (iii) is strengthened to $\sigma_i$ is u.c.p.\ for each $i\in\mathbb N$. Indeed, by assumption (iv) we have $\lim_{i \rightarrow \omega} \|\sigma_i(1_A) - 1_{B_i}\| = 0$. Therefore, for $\omega$-many $i$ (i.e., for all $i$ in some set $I \in \omega$), $\sigma_i(1_A)$ is invertible and $\lim_{i \rightarrow \omega} \|\sigma_i(1_A)^{-1} - 1_{B_i}\| = 0$. We can then replace $\sigma_i$ with the u.c.p.\ map $\sigma_i(1_A)^{-1/2}\sigma_i(\cdot)\sigma_i(1_A)^{-1/2}$ for $\omega$-many $i$ (and replace the remaining $\sigma_i$ by arbitrary u.c.p.\ maps), which does not change $\overline{\sigma}$, and thus does not change (iv) or (v). (Conditions (i)-(ii) are completely unaffected.)
	
In the rest of the proof, we shall view $A$ and $A_\omega \otimes \mathcal U$ as subalgebras of $A_\omega$ and $(A \otimes \mathcal U)_\omega$ respectively, in the natural ways.  Set $\phi_i \coloneqq  \rho_i \circ \sigma_i$ for $i \in \mathbb{N}$ and let $\Phi:A \rightarrow A_\omega$ be the map induced by the $\phi_i$.  Our assumptions ensure that $\Phi$ is a $^*$-homomorphism satisfying 
\begin{align}
\tau \circ \Phi &= \tau|_A, \qquad \tau \in T_\omega(A) \label{eqn:PhiPreservesTrace}.
\end{align}

 Let $\pi:A \rightarrow M_2((A \otimes \mathcal U)_\omega)$ be the $^*$-homomorphism defined by 
\begin{equation}\label{e6.3}
\pi(a) \coloneqq  \begin{pmatrix}
\Phi(a) \otimes 1_{\mathcal U} & 0 \\
0 & a \otimes 1_{\mathcal U}
\end{pmatrix},\quad a\in A.
\end{equation}
Set 
\begin{equation}\label{e6.4}
C \coloneqq  M_2((A \otimes {\mathcal U})_\omega) \cap \pi(A)' \cap \lbrace 1_{M_2((A \otimes {\mathcal U})_\omega)} - \pi(1_A)\rbrace^\perp.
\end{equation} 
Applying \cite[Lemma 4.7]{CETWW},\footnote{Checking the conditions carefully: $A$ is separable, unital, and nuclear; $B_n \coloneqq  A \otimes \mathcal U$ is simple, separable, unital, and $\mathcal{Z}$-stable; $B_n$ has $T(B_n) = QT(B_n)$ and CPoU by virtue of being additionally nuclear \cite[Theorem 3.7]{CETWW}, and $T(B_n) = T(A) \neq \emptyset$ is assumed (the implicit assumption that $T(B_n)$ is compact, contained in the definition of CPoU follows from unitality); $\pi$ is a $^*$-homomorphism. For any non-zero $a\in A$, since $M_2 \otimes A \otimes \mathcal U$ is simple and $\pi(a) \geq \left(\begin{array}{cc} 0&0\\0&a\otimes 1_{\mathcal U}\end{array}\right)$, it follows that $\pi(a)$ is full.} we see that $C$ has strict comparison of positive elements by bounded traces and $T(C)$ is the closed convex hull of the set of traces of the form $\tau(\pi(a)\cdot)$ where $\tau \in T(M_2((A \otimes \mathcal U)_\omega))$ and $a\in A_+$ satisfies $\tau(\pi(a)) = 1$. 
	
Since $A \otimes \mathcal U$ is separable and $\mathcal{Z}$-stable, $T_\omega(M_2((A \otimes \mathcal U)))$ is weak$^*$-dense in $T(M_2((A \otimes \mathcal U)_\omega))$ by \cite[Theorem 8]{Oz13}. Therefore, $T(C)$ is the closed convex hull of the set of traces of the form $\tau(\pi(a)\cdot)$ where $\tau \in T_\omega(M_2((A \otimes \mathcal U)))$ and $a\in A_+$ satisfies $\tau(\pi(a)) = 1$.
	
Fix $0 < \epsilon < 1$. Let $p$ be a projection in $\mathcal U$ with $1 - \epsilon < \tr_\mathcal U(p) < 1$. Define projections $h_1, h_2 \in C$ by 
\begin{align}
h_1 \coloneqq  \begin{pmatrix}
\Phi(1_A) \otimes p & 0 \\
0 & 0
\end{pmatrix},
&&h_2 \coloneqq  \begin{pmatrix}
0 & 0 \\
0 & 1_A \otimes 1_\mathcal U
\end{pmatrix}.
\end{align}

We wish to argue now that $\tau(h_1) < \tau(h_2)$ for all $\tau \in T(C)$.
First, consider a trace of the form $\mu=\tau(\pi(a)\cdot )$ where $\tau \in T_\omega(M_2((A\otimes \mathcal U)))$ and $a \in A_+$ satisfies $\tau(\pi(a))=1$. 
As $M_2$ and $\mathcal U$ have unique tracial states $\tr_{M_2}$ and $\tr_\mathcal U$ respectively, 
\begin{equation} \tau = \tr_{M_2} \otimes \widetilde \tau \otimes \tr_\mathcal U\end{equation}
for some $\widetilde{\tau} \in T_\omega(A)$. 
	
For $b \in \mathcal U$, we compute that
\begin{eqnarray}
\notag
\tau \left( \begin{pmatrix}
\Phi(a) \otimes b & 0 \\
0 & 0
\end{pmatrix}\right) 
&=& \frac{1}{2}\widetilde{\tau}(\Phi(a))\tr_\mathcal U(b)\\ 
&\stackrel{(\ref{eqn:PhiPreservesTrace})}{=}& \frac{1}{2}\widetilde{\tau}(a)\tr_\mathcal U(b),
\end{eqnarray}
and that   
\begin{align}
\tau \left( \begin{pmatrix}
0 & 0 \\
0 & a \otimes b
\end{pmatrix}\right) &= \frac{1}{2}\widetilde{\tau}(a)\tr_\mathcal U(b).
\end{align}
It follows that $\widetilde{\tau}(a) = \widetilde{\tau}(a)\tr_\mathcal U(1_Q)= \tau(\pi(a)) = 1$. Thus $\tau(\pi(a)h_1) = \tfrac{1}{2}\tr_\mathcal U(p)$, and $\tau(\pi(a)h_2) = \tfrac{1}{2}$.
Hence, we have 
\begin{equation}
\mu(h_2-h_1) = \tau(\pi(a)h_2) - \tau(\pi(a)h_1) = \tfrac{1}{2}(1 - \tr_\mathcal U(p)).
\end{equation}
By convexity and continuity, it now follows that $\mu(h_1)-\mu(h_2)=\tfrac12(1-\tr_\mathcal U(p))>0$ for all $\mu \in T(C)$.
	
Since $h_1$ and $h_2$ are projections, we get
\begin{equation} d_\mu(h_2) = \mu(h_2) > \mu(h_1) = d_\mu(h_1) \end{equation}
for all $\mu \in T(C)$. By strict comparison, $h_1$ is Cuntz subequivalent to $h_2$. For projections, Cuntz subequivalence implies Murray--von Neumann subequivalence (by \cite[Proposition 2.1]{Ro92}). 
Therefore, there exists $v \in C$ with $v^*v = h_1$ and $vv^* \leq h_2$. 
These conditions force $v$ to take the form
\begin{equation}
v = \begin{pmatrix}
0 & 0 \\
w & 0 
\end{pmatrix},
\end{equation}
where $w \in (A \otimes \mathcal U)_\omega$ satisfies $w^*w = \Phi(1_A) \otimes p$. The condition that $v$ commutes with $\pi(A)$ implies that
\begin{equation}
w(\Phi(a) \otimes 1_\mathcal U) = (a \otimes 1_{\mathcal U})w, \qquad a \in A, \label{eqn:wIntertwines}
\end{equation}
from which we deduce that $ww^*$ commutes with $A \otimes 1_\mathcal U$.
	
The element $\Phi(1_A) \otimes p$ of $(A \otimes \mathcal U)_\omega$ is represented by the sequence of positive elements $(\phi_i(1_A) \otimes p)_{i =1}^\infty$. In fact, 
\begin{equation} \label{eqn:Embeddings1}
\phi_i(1_A)\otimes p = \rho_i(1_{B_i}) \otimes p
\end{equation} since $\sigma_i$ is unital, and this is a projection since $\rho_i$ is a $^*$-homomorphism.
By standard stability results (e.g., combining the proofs of \cite[Propositon 2.2.6 and Lemma 6.3.1]{Ro00}), the partial isometry $w \in (A \otimes \mathcal U)_\omega$ with source projection $\Phi(1_A) \otimes p$ lifts to a sequence $(w_i)_{i=1}^\infty$ of partial isometries satisfying $w_i^*w_i = \phi_i(1_A) \otimes p=\rho_i(1_{B_i})\otimes p$ for each $i \in \mathbb{N}$.\
	
For $i \in \mathbb{N}$, define $\hat{\rho}_i: B_i \to A \otimes \mathcal U$ by 
\begin{equation}
\hat{\rho}_i (b)\coloneqq  w_i(\rho_i(b) \otimes 1_{\mathcal U})w_i^*, \qquad b \in B_i,
\end{equation}
which is a $^*$-homomorphism since $\rho_i$ is a $^*$-homomorphism and $w_i^*w_i$ commutes with $\rho_i(B_i)\otimes 1_{\mathcal U}$.
Also, since $\rho_i$ is injective and $w_i^*\hat\rho_i(b)w_i = \rho_i(b) \otimes p$, it follows that $\hat\rho_i$ is injective.

Set $D_i \coloneqq  \hat{\rho}_i (B_i) = w_i(\rho_i(B_i) \otimes 1_\mathcal U)w_i^*$. This is a $C^*$-subalgebra of $A \otimes \mathcal U$ isomorphic to $B_i$, so that $D_i \in \S$.	
For $i \in \mathbb{N}$, set $q_i \coloneqq  w_iw_i^* = 1_{D_i}$. Then, $q_i$ is a projection in $A \otimes \mathcal U$ and, for any $\tau \in T(A \otimes \mathcal U)$, 
\begin{align}
\notag
\tau(q_i) &= \tau(w_iw_i^*) \\
\notag
&= \tau(w_i^*w_i)\\
\notag
&= \tau(\phi_i(1_A) \otimes p)\\
\notag
&= \tau(\phi_i(1_A) \otimes 1_\mathcal U)\tr_\mathcal U(p)\\
&\xrightarrow{i \to \omega} \tr_{\mathcal U}(p),
\end{align}
using (v) in the last line, and thereby obtaining uniform convergence over $\tau \in T(A \otimes \mathcal U)$. In addition, since $ww^*$ commutes with $a \otimes 1_{\mathcal U}$ for all $a \in A$, we have 
\begin{equation}
\|q_i(a \otimes 1_\mathcal U) - (a \otimes 1_\mathcal U)q_i\| \xrightarrow{i \to \omega} 0, \qquad a \in A.
\end{equation}
Finally for $a \in A$, working in $(A\otimes\mathcal U)_\omega$, we have
\begin{eqnarray}
\notag
(q_i(a\otimes 1_{\mathcal U}) q_i)_{i=1}^\infty & = & ww^*(a \otimes 1_\mathcal U)ww^* \\
\notag
& \stackrel{(\ref{eqn:wIntertwines})}{=} & ww^*w(\Phi(a) \otimes 1_\mathcal U)w^* \\
\notag
& = &  w(\Phi(a) \otimes 1_\mathcal U)w^* \\
&\in& \prod_\omega D_i.
\end{eqnarray}
Thus, 
\begin{equation}
\mathrm{dist}(q_i(a \otimes 1_\mathcal U)q_i, D_i) \xrightarrow{i \to \omega}
0
\end{equation}
for all $a \in A$.
	
Therefore, for any finite set $\mathcal{F} \subset A$ and for $\omega$-many $i$, we have
\begin{enumerate}
	\item $\|q_i(a \otimes 1_\mathcal U) - (a \otimes 1_\mathcal U)q_i\| < \epsilon$ for all $a \in \mathcal{F}$,
	\item $\mathrm{dist}(q_i(a \otimes 1_\Q)q_i, D_i) < \epsilon$ for all $a \in \mathcal{F}$, and
	\item $\tau(q_i) > 1 - \epsilon$ for all $\tau \in T(A \otimes \mathcal U)$. 
\end{enumerate} 
This almost says that $A \otimes \mathcal U$ satisfies the definition of TA$\S$, except that the finite set is in $A\otimes 1_{\mathcal U}$ rather than in $A \otimes \mathcal U$.  

Now, given a finite subset $\mathcal F\subset A\otimes\mathcal U$ and $\delta>0$, we can find a factorisation $\mathcal U= F\otimes\mathcal V$, where $F$ is a full matrix algebra and $\mathcal V\cong\mathcal U$, together with a finite subset $\mathcal F'$ of $A \otimes F \otimes 1_{\mathcal V}$ such that every element in $\mathcal F$ is within $\delta$ of a corresponding element of $\mathcal F'$.  
Hence (by using $A \otimes F$ in place of $A$, and since $\mathcal S$ is closed under matrix amplifications), it follows that $A \otimes \mathcal U$ is TA$\mathcal S$.
\end{proof}

\begin{remark}
While the argument above and the proof of finite nuclear dimension from $\Z$-stability in \cite{CETWW} both rely on the detailed analysis of relative commutants in ultrapowers \`a la Matui--Sato from \cite{MS12,MS14}, in fact the argument above is considerably more straightforward for two reasons.  Firstly, since the map $\pi$ in \eqref{e6.3} is a $^*$-homomorphism rather than an order zero map, we only need the versions of property (SI) for $^*$-homomorphisms (\cite[Lemma 3.2]{MS14}) rather than the more elaborate order zero map property (SI) techniques of \cite[Section 4]{BBSTWW}.  Secondly, obtaining finite nuclear dimension from $\Z$-stability requires a relatively sophisticated existence theorem (\cite[Lemma 5.2]{CETWW}, \cite[Lemma 7.4]{BBSTWW}) to give rise to the eventual nuclear dimension approximations; in contrast, the corresponding part of Theorem \ref{ClassEmbeddings} is the existence of $\Phi$, which is automatic from the assumptions.
\end{remark}

\begin{remark}\label{rem:Connes}
There is another point which is worthy of note in contrasting the proof of Theorem \ref{ClassEmbeddings} with $\Z$-stability implies finite nuclear dimension.  Both come down to comparison results for relative commutants of the form $C$ in (\ref{e6.4}).  In the nuclear dimension argument of \cite{CETWW} and \cite{BBSTWW}, one takes two positive elements $h_1,h_2$ in such a relative commutant of full spectrum, such that $\tau(h_1^k)=\tau(h_2^k)$ for all $k\in\mathbb N$ and all $\tau\in T(C)$, and obtains unitary equivalence of $h_1$ and $h_2$ (see \cite[Theorem 5.1]{BBSTWW}).  By contrast, in the argument above, one has projections $h_1,h_2\in C$ with $\tau(h_1)<\tau(h_2)$ for all $\tau\in T(C)$, and obtains Murray--von Neumann subequivalence $h_1\precsim h_2$ from strict comparison of $C$.  If it were possible to deduce that projections $h_1,h_2\in C$ with $\tau(h_1)\leq \tau(h_2)$ satisfied $h_1\precsim h_2$, then the argument of Theorem \ref{ClassEmbeddings} would give that $A\otimes\mathcal Q$ is A$\mathcal S$, not just TA$\mathcal S$.  In a $C^*$-algebraic setting such a comparison result for projections (as opposed to positive elements of full spectrum) is too much to hope for; indeed TAF $C^*$-algebras need not be AF.\footnote{\cite[Theorem 2.1]{Lin03} provides many examples, such as the irrational rotation algebra, which is not AF since its $K_1$ group is nonzero.} However, in finite von Neumann algebras, tracial data does completely determine the Murray--von Neumann order on projections, and as such one can perform the final steps of Connes' original approach to his hyperfiniteness theorem in a very similar fashion to the proof of Theorem \ref{ClassEmbeddings}. 

Precisely, let $\mathcal M$ be an injective II$_1$ factor with separable predual. In earlier parts of his argument, Connes shows that $\M$ is McDuff, i.e., $\mathcal M\cong \mathcal M\,\overline{\otimes}\,\mathcal R$, where $\mathcal R$ is the hyperfinite II$_1$ factor \cite[Theorem 5.1]{Co76}, and there is an embedding $\Phi:\M\hookrightarrow \mathcal R^\omega$ \cite[Lemma 5.22]{Co76}.  To obtain hyperfiniteness one needs to show that the first factor embedding $\iota:\mathcal M\hookrightarrow (\mathcal M\,\overline{\otimes}\,\mathcal R)^\omega$ is unitarily equivalent to $\Phi$. This can be deduced from the approximately inner flip on $\mathcal M$ (obtained by Connes as \cite[Lemma 5.25]{Co76} and the direction 3$\Rightarrow$2 of \cite[Theorem 5.1]{Co76}), and it shows that with $\pi:\mathcal M\rightarrow M_2(\mathcal M\,\overline{\otimes}\,\mathcal R)^\omega$ given as the direct sum of $\iota$ and $\Phi$, the projections $\begin{pmatrix}1&0\\0&0\end{pmatrix}$ and $\begin{pmatrix}0&0\\0&1\end{pmatrix}$ are Murray--von Neumann equivalent in this relative commutant.\footnote{After the fact (i.e. once we have hyperfiniteness), it turns out that this relative commutant is a factor; this is deduced as the relative commutant of a unital matrix subalgebra of a factor is a factor (see for example the proof of \cite[Theorem 4.18]{Tak3}).} We note, finally, that by unpicking the analysis of traces on the algebra $C$ in \eqref{e6.4} (performed behind the scenes in \cite[Lemma 4.7]{CETWW}), one finds that this relies heavily on the analysis of traces on von Neumann relative commutants of injective von Neumann algebras. 
\end{remark}

\newcommand{\cstar}{$C^*$}

\end{document}